\newcommand{\RR}{\mathbb{R}}
\newcommand{\CC}{\mathbb{C}}
\newcommand{\ZZ}{\mathbb{Z}}
\newcommand{\QQ}{\mathbb{Q}}
\newcommand{\KK}{\mathbb{K}}
\newcommand{\NN}{\mathbb{N}}
\newcommand{\VV}{\mathbb{V}}
\newcommand{\VVK}{\VV_{\KK}}
\newcommand{\VVKT}{\VV_{\KK^{*}}}
\newcommand{\VVCT}{\VV_{\CC^{*}}}
\newcommand{\NP}{\mathsf{NP}\xspace}
\newcommand{\OB}{\mathcal{O}_B}
\newcommand{\OO}{\mathcal{O}}
\newcommand{\sOB}{\widetilde{\mathcal{O}}_B}
\newcommand{\sOO}{\widetilde{\mathcal{O}}}
\newcommand{\bsz}{\mathcal{L}\xspace}
\newcommand{\res}{\mathtt{res}\xspace}
\newcommand{\lc}{\mathtt{lc}\xspace}
\def\norm#1{\| #1 \|}
\def\normi#1{\norm{#1}_{\infty}}
\newcommand{\cD}{\mathcal{D}\xspace}
\newcommand{\cC}{\mathcal{C}\xspace}
\newcommand{\cJ}{\mathcal{J}\xspace}
\newcommand{\cK}{\mathcal{K}\xspace}
\newcommand{\cO}{\mathcal{O}\xspace}
\newcommand{\maple}{\textsc{maple}\xspace}
\newcommand{\Jac}{\ensuremath{\mathtt{Jac}}\xspace}
\newcommand{\msresultant}{\textsc{ms\_resultant}\xspace}
\renewcommand{\emptyset}{\varnothing\xspace}
\newtheorem{theorem}{Theorem}[section]
\newtheorem{prop}[theorem]{Proposition}
\newtheorem{lemma}[theorem]{Lemma}
\newtheorem{cor}[theorem]{Corollary}
\newtheorem{example}[theorem]{Example}
\newtheorem{remark}[theorem]{Remark}
\newtheorem{definition}[theorem]{Definition}
\newtheorem*{theorem-non}{Theorem}
\DeclareMathOperator{\Vol}{Vol}				%
\newcommand{\Trg}{\ensuremath{\mathsf{TM}_{\Gamma}(\ZZ^2)}\xspace}%
\newcommand{\ous}{\overline{U}}		%
\DeclareMathOperator{\supp}{supp}			%
\newcommand{\TT}{(\KK^*)^2}					%
\DeclareMathOperator{\SL}{SL}				%
\DeclareMathOperator{\Tr}{\top}
\DeclareMathOperator{\Sin}{Sing}				%
\newcommand{\Mfg}{\mathcal{M}_f(\Gamma)}	%
\DeclareMathOperator{\Crit}{Crit}	
\newcommand{\cU}{\mathcal{U}}
\newcommand{\cV}{\mathcal{V}}
\newcommand{\cCr}{\mathcal{C}_{\rho}}
\newcommand{\tmult}{$\mathtt{T}$-multiplicity\xspace}
\newcommand{\TFmult}{{topological face multiplicity set}\xspace}
\newcommand{\TF}{\ensuremath{\mathtt{TF}}\xspace}
\newcommand{\TFK}{\ensuremath{\mathtt{TF}_{\KK}}\xspace}
\newcommand{\TFKg}{\ensuremath{\mathtt{TF}_{\KK}(\Gamma)}\xspace}
\newcommand{\TFKgr}{\ensuremath{\mathtt{TF}_{\KK}^{\rho}(\Gamma)}\xspace}
\newcommand{\TFRg}{\ensuremath{\mathtt{TF}_{\RR}(\Gamma)}\xspace}
\newcommand{\TFRgr}{\ensuremath{\mathtt{TF}_{\RR}^{\rho}(\Gamma)}\xspace}
\newcommand{\TFCg}{\ensuremath{\mathtt{TF}_{\CC}(\Gamma)}\xspace}
\newcommand{\TFCgr}{\ensuremath{\mathtt{TF}_{\CC}^{\rho}(\Gamma)}\xspace}
\newcommand{\amult}{$\mathtt{A}$-multiplicity\xspace}
\newcommand{\AFmult}{{algebraic face multiplicity set}\xspace}
\newcommand{\AFK}{\ensuremath{\mathtt{AF}_{\KK}}\xspace}
\newcommand{\AFKg}{\ensuremath{\mathtt{AF}_{\KK}(\Gamma)}\xspace}
\newcommand{\AFKgr}{\ensuremath{\mathtt{AF}_{\KK}^{\rho}(\Gamma)}\xspace}
\newcommand{\AFRg}{\ensuremath{\mathtt{AF}_{\RR}(\Gamma)}\xspace}
\newcommand{\AFRgr}{\ensuremath{\mathtt{AF}_{\RR}^{\rho}(\Gamma)}\xspace}
\newcommand{\AFCg}{\ensuremath{\mathtt{AF}_{\CC}(\Gamma)}\xspace}
\newcommand{\AFCgr}{\ensuremath{\mathtt{AF}_{\CC}^{\rho}(\Gamma)}\xspace}
\newcommand{\SolKg}{\ensuremath{\mathtt{Sol}_{\KK}(\Gamma)}\xspace}
\newcommand{\SolCg}{\ensuremath{\mathtt{Sol}_{\CC}(\Gamma)}\xspace}
\newcommand{\sjalg}{\textsc{SparseJelonek-2}\xspace}
\colorlet{ecol}{black!50!white}
\colorlet{acol}{blue}
\colorlet{mcol}{red}
\tikzstyle{vertex}=[circle, draw, fill=black, inner sep=0pt, minimum size=2.6pt]
\tikzstyle{mvertex}=[vertex,fill=mcol]
\tikzstyle{edge}=[line width=1.5pt,ecol]
\tikzstyle{anchoredge}=[edge,acol]
\tikzstyle{labelsty}=[font=\normalsize]
\title{Computing the non-properness set of real polynomial maps in the plane\thanks{MSC: 14R25, 26C05 (Primary), 12D10, 14P10, 52B20 (Secondary). Keywords: Real polynomial maps, Set of non-properness, Maps on the plane, Newton polytopes, Boolean complexity.}
}
\author{Boulos EL HILANY%
\thanks{Institut f\"ur Analysis und Algebra, 
TU Braunschweig, Germany
(\texttt{email:}{boulos.hilani@gmail.com, b.el-hilany@tu-braunschweig.de})
}
\and 
Elias TSIGARIDAS%
\thanks{Inria Paris and Institut de
  Math{\'e}matiques de Jussieu-Paris Rive Gauche, Sorbonne
  Universit\'e and Paris Universit\'e,
  France (\texttt{email:}{elias.tsigaridas@inria.fr})}}
\date{ }
\begin{document}
 \maketitle

\begin{abstract}
  We introduce novel mathematical and computational tools to develop a complete
   algorithm for computing the set of non-properness of polynomials
  maps in the plane. In particular, this set, which we call \emph{the Jelonek
    set}, is a subset of $\KK^2$ where a dominant polynomial map
  $f: \KK^2 \to \KK^2$ is not proper; $\KK$ could be either $\CC$ or $\RR$.
  Unlike all the previously known approaches we make no assumptions on $f$
  whenever $\KK = \RR$; this is the first algorithmm with this property.

  The algorithm takes into account the Newton polytopes of the polynomials. As a byproduct we provide a finer representation
  of the set of non-properness as a union of semi-algebraic curves, that
  correspond to edges of the Newton polytopes, which is of independent interest.
  Finally, we present a precise Boolean complexity analysis of the algorithm and
  a prototype implementation in \maple.
 \end{abstract}
 \newpage
 \tableofcontents
 \newpage

\section{Introduction}

Let $f=(f_1,\ldots,f_n):\KK^n\to \KK^n$ be a polynomial map, where
$\KK \in \{\CC,\RR\}$. We say that $f$ is \emph{non-proper} at a point
$y\in\KK^n$ if for any neighborhood $\cO$ of $y$, the preimage
$f^{-1}(\overline{\cO})$ is not compact, where $\overline{\cO}$ is the Euclidean
closure of $\cO$. Namely, at the set $\cJ_f \subset \KK^{n}$ of points $y$ at
which $f$ is non-proper, there exists a sequence
$\{x_k\}_{k\in\NN}\subset \KK^n$ such that $ \Vert x_k\Vert\rightarrow\infty$
and $f(x_k)\rightarrow y$. We call $\cJ_f$ the \emph{Jelonek set} of $f$.

Jelonek first studied \cite{Jel93} the non-properness of maps $\CC^n\to\CC^n$
for the purpose of pushing forward the state-of-the-art  results around the Jacobian
conjecture~\cite{dEss12}. In this context, if the Jacobian matrix of such a map
$f$ is everywhere non-singular, then the invertibility of $f$ becomes equivalent
to emptiness of its Jelonek set. To this end, substantial work has been directed
towards the study of this
set~\cite{Jel02,JelLas18,Jel99}, which led to solving many problems regarding
the topology of polynomial maps~\cite{Jel99,Fer03,JK03,JelTib17}.

Besides the Jacobian conjecture, the description of the Jelonek set is essential
to compute the atypical values of a polynomial function
$\CC^n\to\CC$~\cite{JelTib17,JK03}, for classifying the algebraic subsets of
$\RR^n$ that are images of $\RR^n$ under a polynomial morphism~\cite{Fer03}, and
for characterizing the set of fixed points under a polynomial
morphism~\cite{JelLas09}. As for applications to real-life problems, polynomial
maps appear as models in algebraic statistics~\cite{LS05},
robotics~\cite{Schicho-rigid-move-20}, computer vision~\cite{DKLT-PLMP-19}, and
chemical reaction networks~\cite{Dic16}; to mention few of them. For such a
setting, the input of the problem is a point in the target space, while the
output is a point in the preimage. Then, the Jelonek set represents some of
those inputs that result in a sub-optimal output.

There is no shortage of effective algorithms for computing exactly the Jelonek
set of some polynomial maps, even in more general
settings~\cite{Jel93,Jel99,Sta02,Sta07}. Unfortunately, they are not universal;
for example, in some cases~\cite{Jel93,Jel01c,Sta02} one requires $\KK$ to be an
algebraically closed field, while in other cases, these methods require the maps
to be finite~\cite{Sta07}. In addition, to our knowledge, there are no precise
bit complexity estimates for the various algorithms. Even more, the known
algorithms rely on black box elimination techniques based on Gr\"obner basis
computations and they do not take into account the structure and the Newton
polytopes of the input. When $\KK = \RR$, the situation is more dire; even
though the geometry of the Jelonek set is fairly well understood
\cite{Jel93,Jel02,Sta07,JelLas18}, to our knowledge, there are no dedicated
algorithms to compute it. Up until now, we had to rely on algorithms that assume
$\KK = \CC$ and compute a superset of the (real) Jelonek set.

\subsection{Our contribution}
\label{sec:intr:main}

We consider a \emph{dominant} polynomial map $f=(f_1,f_2):\KK^2\to \KK^2$, where $\KK \in \{\CC,\RR\}$, i.e., $\overline{f(\KK^2)} = \KK^2$. We present a complete and  efficient algorithm to compute the Jelonek set of $f$, \sjalg (Alg.~\ref{alg:sparse-Jelonek-2}),
along with its mathematical foundations, complexity analysis, and a prototype implementation.

The algorithm makes no assumptions on the input polynomials $f$. It outputs a
partition of the (equations of the) Jelonek set to subsets of semi-algebraic (or
algebraic) curves of smaller degree; hence it provides a more accurate and
detailed picture of the topology and the geometry of the Jelonek set, from the
one known before. It depends on their Newton polytopes; the latter encode the non-zero terms
of the polynomials, see Sec.~\ref{ssubs:mixed} for the definition and various
properties. This feature provides us with tools from combinatorial and toric
geometry, and makes the algorithm input and output sensitive. We present a
precise bit complexity analysis of the algorithm when the input consists of
polynomials with rational coefficients, and a (prototype) implementation in
\textsc{maple}. To our knowledge this is the first dedicated algorithm for
computing the Jelonek set when $\KK = \RR$ in the plane, under no assumptions.

An important aspect of our approach is the partition of the Jelonek set into
sets of irreducible (semi-)algebraic curves.
The equations of the curves in each partition set 
are obtained by analyzing the
coefficients of $f$ restricted at some distinguished edge $\Gamma$ of the
polygon $\NP(f)$; the latter is the Minkowski sum of the Newton polytopes of
$f_1$ and $f_2$, $\NP(f_1)$ and $\NP(f_2)$, respectively. As distinguished edges, 
we consider the ones that their corresponding inner normal vector has at least
one negative coordinate; we call them \emph{infinity edges}
(Definition~\ref{def:various-faces}). 

Consider $q$ to be a generic point of the Jelonek set of $f$, i.e.,
$q \in \cJ_f$, and $y$ sufficiently close to $q$. By definition of $\cJ_f$,
there is an isolated point $x \in f^{-1}(y)$ whose Euclidean norm takes
arbitrary large values. One thus can construct a change of variables that 
sends $x = (x_1, x_2)$ to $(z^{M_1},z^{M_2}) = (z_1^{M_{11}} z_2^{M_{12}}, z_1^{M_{21}} z_2^{M_{22}}) $,
where $M_{ij} \in \ZZ$, so that, in the new variables, the (transformed)
preimage is arbitrary close to one of the coordinate axes of $\KK^2$. This suitable
change of variables depends on an edge of the Newton polytope of $f$ in the
following way: the inverse $U$ of the matrix $M:=(M_1,M_2)\in\ZZ^{2\times 2}$ is
such that its first row spans an edge of $\NP(f)$ (Section~\ref{subsec:base-change}). 
Accordingly, after the change of variables, the new polynomial system $f(x)-y = 0$ 
is written as $\ous (f-y)(z)=0$.

The \emph{\TFmult} corresponding to the face $\Gamma$, or \tmult set
for short, collects all points $q$, for which any $y$ sufficiently close to $q$, gives 
rise to a solution $z$ to the system $\ous (f-y)=0$ satisfying $\Vert z - (\rho,0)\Vert\rightarrow 0$ for some $\rho\in\KK^*$. 

We denote it by $\TFKg$ (Definition~\ref{def:t-face-mult}). 
Going
over all the infinity edges of its Newton polytope $\NP(f)$, we obtain the \tmult set of $f$, and through it
the Jelonek set of $f$. The following theorem summarizes these properties; it
appears in Thm.~\ref{thm:Jf=union-Tmult} along with its proof.
\begin{theorem-non}[The Jelonek set as a union of \tmult sets]
  \label{th:main1}
  Let $f:\KK^2\to\KK^2$ be a dominant polynomial map, where
  $\KK \in \{\CC ,\RR \}$. Then, the Jelonek set of $f$ is the union, over all
  the edges of its Newton polytope $\NP(f)$ intersecting $(\RR^*)^2$, of its \tmult sets.
\end{theorem-non}

To compute the \tmult sets we note that, as $\Vert y - q\Vert \rightarrow 0$, if a
solution $z(y)\in\KK^2$ to any (parametrized by $y$) polynomial system $F_y = 0$
converges to a point $z(q)$, then the multiplicity of $z(q)$ (as a solution of
$F_y = 0$) should usually be higher for $y=q$ than for most other values of $y$.

This observation demonstrates that we can detect points in \tmult sets by
capturing the change in the multiplicity of some solutions of the transformed
system. The set of all such $q\in\KK^2$ that increase the multiplicity of a
solution to $F_y =0$, is called the \emph{\AFmult} corresponding to $\Gamma$, or
\amult for short, and we denote it by $\AFKg$
(Definition~\ref{def:a-face-mult}). We develop an algorithm, \msresultant, to
compute the \tmult set using the \amult set by employing tools from elimination
theory. The following theorem supports \msresultant; it appears in
Prop.~\ref{prop:correct-resultant} along with its proof.
\begin{theorem-non}[\msresultant and computation of the  \tmult sets]
  \label{th:main2}
  Let $f:\KK^2\to\KK^2$ be a dominant polynomial map, where $\KK\in \{\CC, \RR\}$. Then, for any edge $\Gamma$ of $\NP(f)$,
  \msresultant (Alg.~\ref{alg:cim-resultant}),
  correctly computes the \tmult set.
\end{theorem-non}

%\textsc{ms\_Fulton} is based on Fulton's approach for computing solution multiplicities and outperforms
%\msresultant for maps that are  mildly degenerate; alas its has exponential worst case complexity.
\msresultant
relies on resultant computations and exploits the property
that the multiplicity of a root of the resultant accumulates the multiplicities of the solutions of the system in the corresponding fiber.
It is the best choice for maps that are quite degenerate and it has  polynomial worst case complexity.
\msresultant is straightforward for complex polynomial maps. For polynomial maps $f:\RR^2\to\RR^2$,
however, we need to perform a more refined analysis. Namely, let $g:=\CC f:\CC^2\to\CC^2$ denote the complexification of $f$, and let $\mathcal{D}_g$ be its \emph{discriminant}. 
This is the image, under $g$, of the critical points in $\CC^2$ of $g$. For each edge $\Gamma\prec\NP(g)$ we consider the decomposition of $\AFCg\cap\RR^2$ into a collection $\cC$ of smooth disjoint segments $C_1,\ldots,C_r$, separated by isolated points from $\cD_g\cap\AFCg$ and singular points of $\AFCg$. That is, these points are boundary points of the curve segments and $\cJ_f$ is the Euclidean closure of the disjoint union $C_1\sqcup\cdots\sqcup C_r$. 
We show in~\S\ref{sec:f-mult} that each segment $C\in\cC$ either lies entirely in the \tmult set of $f$, or it is disjoint from it. Then, once the \amult set of the complex map $g$ is computed, determining which of its segments $C\in\cC$ is a part of the \tmult set of $f$ reduces to picking a random point in $C$, and counting the number of its real preimages under $f$. One has to make sure that this sampling procedure avoids other \amult sets. We describe this in~\S\ref{sec:mult-set-computation}. Note that, thanks to our first theorem above, this approach holds true if we replace the \amult set by $\cJ_g$ and the \tmult set by $\cJ_f$. 

The complexity bound of \msresultant for maps $\RR^2\to\RR^2$
(Thm.~\ref{thm:cim-res-complexity-R}) is much higher than the one in the complex
case (Thm.~\ref{thm:cim-res-complexity-R}). However, this worst case bound is
attained only for very particular polynomial maps.

Using the previous tools, a straightforward method to compute the
Jelonek set is as follows: For all edges $\Gamma$ of $\NP(f)$ compute the
corresponding \tmult set. This is the backbone of the proof of correctness of algorithm
\sjalg.
\begin{theorem-non}[Computation of the Jelonek set]
  \label{thm:Alg-correctness}
  Let $f:\KK^2\to\KK^2$ be a dominant polynomial map, where $\KK= \{\CC, \RR \}$.
  Then,  \sjalg (Alg.~\ref{alg:sparse-Jelonek-2}) computes correctly
  the Jelonek set of $f$.
\end{theorem-non}

Unlike previously known methods for computing $\cJ_f$, \sjalg depends on the Newton polytope of $f$. Consequently, for maps that are mildly degenerate with respect to their 
Newton polytopes, our methods outperforms standard algorithms for complex maps (cf. Thm.~\ref{thm:Jelonek_2-complexity} and
Cor.~\ref{cor:sparse-complexity}.)

As a byproduct of our methods for describing the Jelonek set, we obtain in Thm.~\ref{thm:special_case-odd} of~\S\ref{sec:f-mult} a partial characterization of real polynomial maps whose Jelonek set is a union of algebraic curves in $\RR^2$. Consequently (Corollary~\ref{cor:birational}), we show that the Jelonek set of real maps, with a birational complexification, is algebraic.

\subsection{An example}
The following example illustrates our first main Theorem that relates the Jelonek set to \tmult sets. Let $f :\KK^2 \to \KK^2$ be the following polynomial map
\begin{equation}\label{eq:example-1}
  f(x) = 0
  \Leftrightarrow \left\{
    \begin{aligned}
      1+2x_1 x_2 - x_1^2x_2^3 & =0 \\
      5 + 12x_1x_2 -10x_1^2 x_2^3+ 2x_1^3 x_2^5  & =0
    \end{aligned}
  \right \}.
\end{equation}
The corresponding Newton polytopes and their Minkowski sum are in
Figure~\ref{fig:ex-mult}.
The preimage of $y$ is such that
$x_1\,x_2 =\frac{2(y_1-1)^2}{(-2y_1 + y_2 - 3)}$. Thus, the norm of $x$ takes
arbitrary large values if $y$ converges to, say, $(-2,-1)$.

To transform the polynomials in~\eqref{eq:example-1} we consider the edge $\Gamma$ (in the Minkowski sum)
with outer normal $(2,-1)$ and defined by the vertices $(2,2)$ and $(5,8)$;
see the rightmost polygon in Figure~\ref{fig:ex-mult}. The corresponding toric
transformation has matrix
$U= \big(\begin{smallmatrix} -1 & 1\\ -2 & 1 \end{smallmatrix}\big)$ and it
transforms the system $f_1(x_1, x_2) - y_1 = f_2(x_1, x_2) - y_2 = 0$ (up to a
monomial multiplication) to
\begin{equation}
  \label{eq:sys:example1}
\left \{
    \begin{aligned}
      2-z_1 - z_2(1-y_1) & =0 \\
      12 -10z_1 + 2z_1^2 -z_2(5-y_2) & =0
    \end{aligned}
  \right \}.
\end{equation}
The new  system has two
simple solutions $(2,0)$ and
$\sigma := \left( \frac{6y_1 - y_2-1}{2(y_1-1)}, \frac{y_2 -2y_1 -3}{2(y_1-1)^2} \right)$.
Notice that if $(y_1, y_2) \rightarrow (-2, -1)$,
then the second coordinate of $\sigma$ goes to 0.
In particular, if  $(y_1, y_2) \rightarrow (-2, -1)$, then  $\sigma \rightarrow (2, 0)$.
For
our example it holds $\TFKg =\{(y_{1}, y_{2}) \in \KK^{2} \,|\, 2y_1 - y_2 - 3=0\}$; which is a curve.
In addition, following Eq.~\eqref{eq:sys:example1},  the solution $(2,0)$ is simple if
$y_2 - 2y_1 -3 \neq 0$. Indeed, the determinant of the Jacobian matrix of the
system~\eqref{eq:sys:example1} evaluated at $z=(2,0)$ equals
$ \big|\begin{smallmatrix} -1 & -2\\ 1-y_1 & 5-y_2 \end{smallmatrix}\big|$. This
is the polynomial the zero set of which defines  \TFKg.

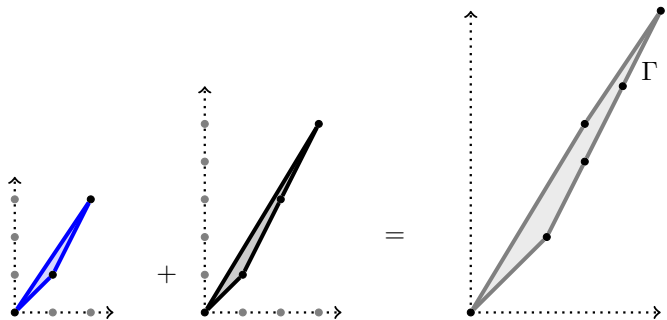
\begin{figure}[t]
  \centering
  \begin{tikzpicture}
    \tikzstyle{bluefill1} = [fill=blue!20,fill opacity=0.8]          % style of one filling
    \tikzstyle{blackfill1} = [fill=gray,fill opacity=0.4]          % style of one filling
    \tikzstyle{greyfill1} = [fill=gray!20,fill opacity=0.8]          % style of one filling

    \begin{scope}[]
			% Axes

	\draw[arrows=->,line width=0.3 mm, dotted] (0,0)-- (0,1.8);

	\draw[arrows=->,line width=0.3 mm, dotted] (0,0)-- (1.3,0);

	\node[vertex, color = gray] (a) at (0.5, 0) {};
	\node[vertex, color = gray] (b) at (1, 0) {};
	\node[vertex, color = gray] (c) at (0, 0.5) {};
	\node[vertex, color = gray] (c) at (0, 1) {};
	\node[vertex, color = gray] (c) at (0, 1.5) {};

			\filldraw[bluefill1,line width=0.0 mm ] (0., 0.)--(0.5, 0.5)--(1, 1.5);	% Drawing of a cylce

			\node[vertex] (1) at (0., 0.) {};
			\node[vertex] (2) at (0.5, 0.5) {};
			\node[vertex] (3) at (1, 1.5) {};
			\draw[edge, color = blue] (1)edge(2) (2)edge(3) (3)edge(1);

		\end{scope}
		\node at (2,.5) {$+$};
		\begin{scope}[xshift=2.5cm]

			% Axes

	\draw[arrows=->,line width=0.3 mm, dotted] (0,0)-- (0,3);

	\draw[arrows=->,line width=0.3 mm, dotted] (0,0)-- (1.8,0);

	\node[vertex, color = gray] (a) at (0.5, 0) {};
	\node[vertex, color = gray] (b) at (1, 0) {};
	\node[vertex, color = gray] (c) at (0, 0.5) {};
	\node[vertex, color = gray] (c) at (0, 1) {};
	\node[vertex, color = gray] (c) at (0, 1.5) {};
	\node[vertex, color = gray] (c) at (0, 2) {};
	\node[vertex, color = gray] (c) at (0, 2.5) {};
	\node[vertex, color = gray] (c) at (1.5, 0) {};

			\filldraw[blackfill1,line width=0.0 mm ] (0., 0.)--(0.5, .5)--(1, 1.5) -- (1.5, 2.5);	% Drawing of a cylce

			\node[vertex] (1) at (0., 0.) {};
			\node[vertex] (2) at (0.5, 0.5) {};
			\node[vertex] (3) at (1, 1.5) {};
			\node[vertex] (4) at (1.5, 2.5) {};
			\draw[edge, color = black ] (1)edge(2) (2)edge(3) (3)edge(4)  (4)edge(1);

		\end{scope}
		\node at (5,1) {$=$};
		\begin{scope}[xshift=6cm]
	\draw[arrows=->,line width=0.3 mm, dotted] (0,0)-- (0,4);

	\draw[arrows=->,line width=0.3 mm, dotted] (0,0)-- (2.5,0);
			\filldraw[greyfill1,line width=0.0 mm ] (0., 0.)--(1, 1)--(2.5, 4) -- (1.5, 2.5);	% Drawing of a cylce

			\node[vertex] (1) at (0., 0.) {};
			\node[vertex] (2) at (1, 1) {};
			\node[vertex] (3) at (2.5, 4) {};
			\node[vertex] (4) at (1.5, 2.5) {};
			\node[vertex] (5) at (1.5, 2) {};
			\node[vertex] (6) at (2, 3) {};
			\draw[edge] (1)edge(2) (2)edge(5) (5)edge(6)  (6)edge(3) (3)edge(4) (4)edge(1);

			\node[label={[labelsty]right:{ $\Gamma$}}] at (2, 3.2) {};

		\end{scope}
      \end{tikzpicture}
      \caption{Two Newton polytopes and their Minkowski sum. They correspond to
        the polynomials of the map in Eq.~\eqref{eq:sys:example1}.}
      \label{fig:ex-mult}
\end{figure}

\subsection{Related work}
There are already works that exploit the  structure of Newton polyhedra for the computation of topological data of polynomial maps.
For polynomial functions this includes computing the Milnor number at the origin~\cite{Kou76} and the bifurcation set~\cite{NZ90,Zah96}.
Whereas for maps, Newton polyhedra were used to compute the \L{}ojasiewicz exponents~\cite{Biv07}, prove non-properness conditions~\cite{thao2009condition}, and compute the set of atypical values~\cite{chen2014invertible,Est13}.
In all these cases, there is the requirement of  some form of face non-degeneracy condition on the corresponding maps. That is, the tuple of polynomials
is required to be in a Zariski open set in the space of all polynomial maps with a given set of Newton polyhedra. Our approach does not need any such assumption whatsoever.

\subsection{Notation}
\label{sec:notation}

We denote by $\OO$, resp. $\OB$, the arithmetic, resp.  bit,
complexity and we use $\sOO$, respectively $\sOB$, to ignore
(poly-)logarithmic factors.

For a polynomial $f \in \ZZ[x]$ or $f \in \ZZ[x_1, x_2]$ its infinity norm
$\normi{f}$ equals the maximum of absolute values of its
coefficients.  We denote by $\bsz(f)$ the logarithm of its infinity
norm.  We also call the latter the bitsize of the polynomial,
that is a shortcut for  the maximum bitsize of all its coefficients.
A univariate polynomial is of size $(d,\tau)$ when its degree is at
most $d$ and has bitsize $\tau$.
We represent a real algebraic number $\alpha \in \RR$ using the
\textit{isolating interval representation}; it includes a square-free
polynomial, $A$, which vanishes at $\alpha$ and an interval with rational
endpoints that contains $\alpha$ and no other root of $A$.
If $\alpha \in \CC$, then instead of an interval we use a rectangle in $\RR^2$
where the coordinates of its vertices are rational numbers.

For a polynomial $f \in \KK[x_1, x_2]$, where $\KK \in \{\RR, \CC\}$, we
denote its zero set by $\VVK(f) \subset \KK^2$ and $\VVKT(f) \subset \TT$ is its
zero set over the corresponding torus. We use the same notation if $f$ is a
polynomial system, that is $f = (f_1, f_2)$.

We use the abbreviation $[n]$ for $\{1, 2, \dots, n \}$.

\subsection{Organization of  the paper}
Sec.~\ref{sec:Jel0} gives the state of the art of the problem. We present the known methods for computing the Jelonek set for maps $\CC^n\to\CC^n$ (Algs.~\ref{alg:Jelonek_n}, and~\ref{alg:Jelonek_2}), and deduce their complexity in Thms.~\ref{Jelonek-n-complexity}, and~\ref{thm:Jelonek_2-complexity}.
In Sec.~\ref{sec:prel} we give the necessary notations to introduce the
algorithm \sjalg. This includes a classification of faces
of $\NP(f)$ and the introduction of the toric change of coordinates.
The first half of Section~\ref{sec:main-alg} gives a detailed description of the
functionality of \sjalg, while in the second half we
present its complexity analysis (Thm.~\ref{th:sparse-complexity}) and a detailed example.
Sections~\ref{sec:f-mult} and~\ref{sec:mult-set-computation} concern the
correctness of Algorithm \sjalg; in particular we define
\tmult sets in terms of appropriate toric transformations introduced in
Sec.~\ref{subsec:base-change}. Then, we reformulate Thm.~\ref{th:main1} as
Thm.~\ref{thm:Jf=union-Tmult} and prove it.
The proof of Thm.~\ref{th:main2} is the proof of Prop.~\ref{prop:correct-resultant} in Sec.~\ref{sec:mult-set-computation}.
Finally, Sec.~\ref{sec:implementation} presents our prototype implementation.

\section{Complexity of known methods}
\label{sec:Jel0}

We start with a definition of the Jelonek set, see~\cite{Jel93,Jel99}.

\begin{definition}[Jelonek set]
  \label{def:Jel}
  Given two affine varieties, $X$ and $Y$, and a map $F:X\to Y$, we say that $F$
  is \emph{non-proper} at a point $y\in Y$, if there is no neighborhood
  $U\subset Y$ of $y$, such that the preimage $F^{-1}(\overline{U})$ is compact,
  where $\overline{U}$ is the Euclidean closure of $U$. In other words, $F$ is
  non-proper at $y$ if there is a sequence of points $\{x_k\}_{k \in \NN}$ in $X$ such that
  $\Vert x_k\Vert\to + \infty$ and $f(x_k)\to y$. The \emph{Jelonek set} of
  $F$, $\mathcal{J}_F$, consists of all points $y\in Y$ at which $F$ is
  non-proper.
\end{definition}

Jelonek proved \cite{Jel93,Jel02} that
%which we also refer to as  Jelonek set,
for a dominant polynomial map $f=(f_1,\ldots,f_n): \KK^n\to\KK^n$,
$ x:=(x_1,\ldots,x_n)\mapsto f(x)$, the set $\cJ_f$, if it is non-empty, then it is
\emph{$\KK$-uniruled}. That is, for every point $y\in \cJ_f$, there exists a
non-constant polynomial map $\varphi:~\KK\to \cJ_f$ such that $\varphi(0)=y$.
Moreover, if $\KK = \CC$, then $\cJ_f$ is a hypersurface~\cite{Jel93}, while in
the real case the dimension is less than or equal to $n-1$. These two properties
are also valid for maps over algebraically closed fields~\cite{Sta05} and when
the domain, or the codomain, is an affine variety~\cite{JelLas18}. In this
setting, there are methods for testing properness~\cite{Jel99} and computing the
Jelonek set~\cite{Jel93,Sta02}. These is also an upper bound on the degree of
$\cJ_f$ when $\KK = \CC$~\cite{Jel93}, which is
\[
\frac{\prod_{i=1}^n\deg f_i -\mu(f)}{\min_{i=1,\ldots,n}\deg f_i} ,
\] where $\mu(f)$ is the number of points in a generic fiber of $f$ (cf.~\cite{Jel93}).

Consider the maps $F_i:~\CC^n\to\CC^n\times\CC$, such that
$F_i(x):=(f(x),x_i)$, for $i \in  [n]$. Also let
\[ \sum\nolimits_{k=0}^{N_i}A_k^i(f) \, x_i^{N_i -k}~\in \CC[f_1,\ldots,f_n,x_i] , \]
be the polynomial defining the equation of the hypersurface $F_i(\CC^n)\subset \CC^n\times\CC$.
Then,  $A_0^i(f)$ is the \emph{$i$-th non-properness polynomial of $f$}.

\begin{theorem}[{{\cite[Prop. 7]{Jel93}}}]\label{th:Jeln}
  The Jelonek set, $\cJ_f$, of a dominant polynomial map $f:~\CC^n\to\CC^n$,  is the zero locus of the polynomial $\prod_{i=1}^nA_0^i(y)$, where each $A_0^i$ is the $i$-th non-properness polynomial of $f$.
\end{theorem}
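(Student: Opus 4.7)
The plan is to prove both inclusions of the claimed equality $\cJ_f = V\bigl(\prod_{i=1}^{n} A_0^i\bigr)$. I will write $P_i(y,t) := \sum_{k=0}^{N_i} A_k^i(y)\, t^{N_i-k}$ for the minimal defining polynomial of the hypersurface $H_i := \overline{F_i(\CC^n)} \subset \CC^n \times \CC$, where $\overline{\,\cdot\,}$ denotes the Zariski closure and $F_i(x) = (f(x), x_i)$; by the choice of $N_i$ as the $t$-degree of $P_i$, the leading coefficient $A_0^i(y)$ is not identically zero, so $V(A_0^i)$ is a proper subvariety of $\CC^n$. For the easier inclusion $\cJ_f \subseteq V(\prod_i A_0^i)$, I would take $y \in \cJ_f$ together with a sequence $\{x^{(k)}\} \subset \CC^n$ such that $\|x^{(k)}\| \to \infty$ and $f(x^{(k)}) \to y$. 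After passing to a subsequence, some coordinate $x_i^{(k)}$ would satisfy $|x_i^{(k)}| \to \infty$; since $F_i(x^{(k)}) \in H_i$, the relation $P_i(f(x^{(k)}), x_i^{(k)}) = 0$ holds, and dividing it by $(x_i^{(k)})^{N_i}$ and letting $k \to \infty$ would yield $A_0^i(y) = 0$.

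For the reverse inclusion, I would fix $y_0$ with $A_0^i(y_0) = 0$ for some $i$ and construct a sequence witnessing $y_0 \in \cJ_f$. The key is to consider the partial projective compactification $\bar{H}_i \subset \CC^n \times \mathbb{P}^1$ of $H_i$: using homogeneous coordinates $[t_0:t_1]$ on $\mathbb{P}^1$ with $t = t_1/t_0$, the closure $\bar{H}_i$ is cut out by the homogenization $\sum_k A_k^i(y)\, t_1^{N_i-k} t_0^{k} = 0$, and its divisor at infinity $\bar{H}_i \cap (\CC^n \times \{\infty\})$ is precisely $V(A_0^i) \times \{\infty\}$; in particular $(y_0, \infty) \in \bar{H}_i$. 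Because $f$ and hence $F_i$ are dominant, $F_i(\CC^n)$ is a constructible subset of $H_i$ (by Chevalley's theorem) that is Zariski dense in the irreducible variety $\bar{H}_i$. Since any proper Zariski-closed subvariety of an irreducible complex algebraic variety has empty Euclidean interior, $F_i(\CC^n)$ would then be Euclidean dense in $\bar{H}_i$. Thus I can select a sequence $(y^{(k)}, t^{(k)}) \in F_i(\CC^n)$ converging to $(y_0, \infty)$ in the Euclidean topology of $\CC^n \times \mathbb{P}^1$, i.e., $y^{(k)} \to y_0$ in $\CC^n$ and $|t^{(k)}| \to \infty$. Lifting each such point to some $x^{(k)} \in \CC^n$ with $f(x^{(k)}) = y^{(k)}$ and $x_i^{(k)} = t^{(k)}$ would give $\|x^{(k)}\| \geq |t^{(k)}| \to \infty$ while $f(x^{(k)}) \to y_0$, thereby establishing $y_0 \in \cJ_f$.

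The main obstacle is the density claim in the second direction: I must approach the infinity divisor $V(A_0^i) \times \{\infty\}$ by genuine images of $F_i$, not merely by arbitrary points of $\bar{H}_i$. If the general Zariski-to-Euclidean density statement feels too abstract, a concrete alternative would be a Newton--Puiseux argument: pick a generic affine line $y(s)$ through $y_0$ on which $A_0^i$ has an isolated zero at $s = 0$, track the Puiseux branch of $P_i(y(s), t) = 0$ whose $t$-coordinate blows up as $s \to 0$, and then use that $H_i \setminus F_i(\CC^n)$ lies in a proper subvariety of $H_i$ to conclude that this branch meets $F_i(\CC^n)$ for generic $s$, which produces the required lift to $\CC^n$.
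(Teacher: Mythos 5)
The paper does not prove Thm.~\ref{th:Jeln} at all: it is quoted from Jelonek \cite{Jel93} (Prop.~7), so there is no in-paper argument to compare yours against. Judged on its own, your two-inclusion proof is correct and is essentially the classical argument. The first inclusion (divide $P_i(f(x^{(k)}),x_i^{(k)})=0$ by $(x_i^{(k)})^{N_i}$ along a subsequence with $|x_i^{(k)}|\to\infty$) is fine. In the second inclusion the two facts you lean on are true but deserve one line each: (i) that $\bar H_i\cap(\CC^n\times\{\infty\})$ is \emph{all} of $V(A_0^i)\times\{\infty\}$ is not automatic from writing down the homogenization --- you need the remark that, since $A_0^i\not\equiv 0$ (here dominance of $f$ also guarantees $N_i\geq 1$), the bihomogeneous zero locus is pure of codimension one and meets the divisor at infinity in dimension $n-1$, so it has no component at infinity and therefore coincides with the closure of $H_i$; equivalently, one can pass to the reciprocal polynomial $Q(y,s)=\sum_k A_k^i(y)s^k$ and note that no local branch of $V(Q)$ through $(y_0,0)$ can lie in $\{s=0\}$ because $V(Q)\cap\{s=0\}=V(A_0^i)\times\{0\}$ has dimension $n-1$; and (ii) the passage from ``$F_i(\CC^n)$ is constructible and Zariski dense in the irreducible $\bar H_i$'' to Euclidean density uses the standard fact that a nonempty Zariski-open subset of an irreducible complex variety is dense in the analytic topology. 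With those two standard justifications made explicit, the lifting step gives exactly the sequence required by Def.~\ref{def:Jel}, and the proof is complete; your Newton--Puiseux fallback is an unnecessary but harmless alternative for point (ii).
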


For the special case $n = 2$, the
following theorem holds:
\begin{theorem}[{{\cite[Thm. 2.2]{Jel01c}}}]\label{th:Je2}
  Consider a dominant polynomial map $f:\CC^2\to\CC^2$, $(x_1,x_2)\mapsto f(x_1,x_2)$.
  Let $P_i(y_1,y_2,x_i)=\sum_{k=0}^{n_i}P_{ik}(y_1,y_2)x_i^{n_i-k}$ be the resultant of the polynomials $(f_1 - y_1, f_2 - y_2)$ with respect to $x_j$ for distinct $i,j\in\{1,2\}$.
  Then, the Jelonek set of $f$ is $\{(y_1,y_2)\in\CC^2~|~ P_{1,0}P_{2,0} = 0\}$.
\end{theorem}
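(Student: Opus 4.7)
The plan is to derive Theorem~\ref{th:Je2} as the concrete instantiation of Theorem~\ref{th:Jeln} in dimension $n = 2$, by identifying the non-properness polynomials $A_0^i$ with the leading coefficients $P_{i,0}$ of the resultants.

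First, I would fix $\{i,j\} = \{1,2\}$ and analyze the image of the auxiliary map $F_i \colon \CC^2 \to \CC^3$, $F_i(x_1,x_2) = (f_1(x_1,x_2), f_2(x_1,x_2), x_i)$. Since $f$ is dominant, $\overline{F_i(\CC^2)}$ is a hypersurface in $\CC^3$ (image of dimension $2$ in a $3$-dimensional space), cut out by the elimination ideal $(f_1 - y_1, f_2 - y_2) \cap \CC[y_1,y_2,x_i]$. This is a height-$1$ prime ideal in a UFD and hence principal; by Theorem~\ref{th:Jeln} it is generated by $Q_i := \sum_{k=0}^{N_i} A_k^i(y_1,y_2)\,x_i^{N_i - k}$. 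By standard specialization properties of resultants, the polynomial $P_i(y_1,y_2,x_i) = \res_{x_j}(f_1 - y_1, f_2 - y_2)$ vanishes on $\overline{F_i(\CC^2)}$ and therefore factors as $P_i = h_i(y_1,y_2) \cdot Q_i^{e_i}$ for some $h_i \in \CC[y_1,y_2]$ and integer $e_i \geq 1$. Extracting the coefficient of $x_i^{N_i e_i} = x_i^{n_i}$ on both sides gives $P_{i,0} = h_i \cdot (A_0^i)^{e_i}$, so the inclusion $\{A_0^1 A_0^2 = 0\} \subseteq \{P_{1,0} P_{2,0} = 0\}$ is immediate.

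The reverse inclusion I would establish directly by a root-escape argument. Fix $(y_1^*, y_2^*)$ with $P_{1,0}(y_1^*, y_2^*) = 0$; the case of $P_{2,0}$ is symmetric. Pick a sequence $(y_1^{(k)}, y_2^{(k)}) \to (y_1^*, y_2^*)$ with $P_{1,0}(y_1^{(k)}, y_2^{(k)}) \neq 0$, which exists because $P_{1,0}$ is not identically zero (otherwise $P_1$ would drop degree globally). For each $k$, the univariate polynomial $P_1(y_1^{(k)}, y_2^{(k)}, x_1)$ has degree exactly $n_1$ in $x_1$, but its leading coefficient tends to zero; by the classical continuity-of-roots principle (when the leading coefficient of a family of polynomials tends to zero while the limit polynomial has strictly smaller degree, some roots must escape to infinity), at least one root $x_1^{(k)}$ satisfies $|x_1^{(k)}| \to \infty$. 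The fundamental property of the resultant then guarantees that for such $x_1^{(k)}$ the univariate polynomials $f_1(x_1^{(k)}, x_2) - y_1^{(k)}$ and $f_2(x_1^{(k)}, x_2) - y_2^{(k)}$ share a common root $x_2^{(k)} \in \CC$; hence $f(x_1^{(k)}, x_2^{(k)}) = (y_1^{(k)}, y_2^{(k)}) \to (y_1^*, y_2^*)$ while $\|(x_1^{(k)}, x_2^{(k)})\| \to \infty$, placing $(y_1^*, y_2^*) \in \cJ_f$.

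The main obstacle is justifying the lifting in the reverse direction: one must exclude the scenario where a root $x_1^{(k)}$ of $P_1$ is spurious, reflecting merely the simultaneous vanishing of the leading coefficients of $f_1 - y_1^{(k)}$ and $f_2 - y_2^{(k)}$ viewed as polynomials in $x_2$, rather than a genuine common root. These leading coefficients, however, are fixed polynomials in $x_1$ alone (independent of $y_1, y_2$), so each vanishes only on a finite subset of $\CC$; the escaping sequence $|x_1^{(k)}| \to \infty$ eventually avoids both subsets, and the lifting is genuine. Combining the two inclusions yields $\cJ_f = \{(y_1,y_2)\in \CC^2 : P_{1,0}(y_1,y_2) \cdot P_{2,0}(y_1,y_2) = 0\}$, as claimed.
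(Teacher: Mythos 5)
The paper offers no proof of Theorem~\ref{th:Je2}: it is quoted from \cite{Jel01c}, so your argument has to stand on its own, and its overall strategy is sound. The inclusion $\cJ_f\subseteq\{P_{1,0}P_{2,0}=0\}$ does follow from Theorem~\ref{th:Jeln} once one knows that the implicit equation $Q_i$ of $\overline{F_i(\CC^2)}$ divides $P_i$ in $\CC[y_1,y_2,x_i]$, since leading coefficients with respect to $x_i$ multiply and hence $A_0^i$ divides $P_{i,0}$; and the reverse inclusion via escaping roots of $P_1(y^{(k)},\cdot)$ lifted through the resultant is the right idea, including the key observation that the $x_j$-leading coefficients of $f_1-y_1$, $f_2-y_2$ are polynomials in $x_i$ alone, so an escaping root eventually avoids their (finitely many) zeros and the lift is genuine. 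Two inaccuracies are harmless but worth fixing: the cofactor in $P_i=Q_i^{e_i}\cdot S$ need not depend only on $(y_1,y_2)$ (the extraneous factor of this resultant is a polynomial in $x_i$, coming from common zeros of the two $x_j$-leading coefficients), and the claim that those leading coefficients are independent of $y$ requires $\deg_{x_j}f_i\geq 1$; in the excluded case $P_{i,0}$ is a nonzero constant and the assertion is vacuous.

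The genuine gap is in the root-escape step. The continuity-of-roots principle you invoke needs the limit polynomial $P_1(y^*,\cdot)$ to be nonzero of degree strictly less than $n_1$; it fails when \emph{all} coefficients $P_{1,k}(y^*)$ vanish simultaneously, and this does happen for dominant maps. Concretely, for $f=(x_1x_2,\,x_2)$ one gets $P_1=\pm(y_2x_1-y_1)$, and at $y^*=(0,0)$ the admissible sequence $y^{(k)}=(0,1/k)$ has $P_{1,0}(y^{(k)})\neq 0$ while the unique root $x_1^{(k)}=0$ stays bounded, so your argument as written does not place $y^*$ in $\cJ_f$ even though it belongs there. The repair is short and you should include it: if $P_1(y^*,\cdot)\equiv 0$, then for every $x_1$ outside the finite zero set of the two $x_2$-leading coefficients the specialized resultant argument gives some $x_2$ with $f(x_1,x_2)=y^*$, so the fiber $f^{-1}(y^*)$ is unbounded and $y^*\in\cJ_f$ directly; alternatively, such $y^*$ are exactly points with positive-dimensional fiber, hence finitely many, and since every irreducible component of $\{P_{1,0}=0\}$ is a curve, the non-degenerate points are dense in it and one concludes by closedness of $\cJ_f$. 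With this case added, the reverse inclusion, and hence the proof, is complete.
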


The computation of the implicit equation of parametrized hypersurfaces
$F_i(\CC^n)$ requires to eliminate $n-1$ from the variables $x_1,\ldots,x_n$. Thus, to
compute the Jelonek set we need effective computations with resultants or Gr\"obner bases.

In Alg.~\ref{alg:Jelonek_2} and Alg.~\ref{alg:Jelonek_n} we present the
pseudo-code of the algorithms supported by Thms.~\ref{th:Jeln}
and~\ref{th:Je2} for computing the Jelonek set in $\CC^n$ and $\CC^2$. The
proofs of the following two results are in the appendix.

\begin{theorem}
  \label{Jelonek-n-complexity}
  Let $f = (f_1,\ldots,f_n) \in \ZZ[x_1, \dots, x_n]$ be polynomials of
  size $(d, \tau)$.
  Alg.~\ref{alg:Jelonek_n} computes the Jelonek set $\cJ_f$ in
  \begin{equation}\label{eq:complexity-n}
    \sOB( 2^n n^{n(\omega+1) - \omega +1} d^{n^2 + (n-1)\omega}(\tau + n d)),
  \end{equation}
  where $\omega$ is the exponent in the complexity of matrix multiplication.
  It consists of polynomials in $\ZZ[y_1, \dots, y_n]$
  of size $( \OO(d^n), \sOO((nd)^{n-1} \tau) )$.
\end{theorem}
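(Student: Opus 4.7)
The plan is to bound the bit complexity of each step of Algorithm~\ref{alg:Jelonek_n}, which, by Theorem~\ref{th:Jeln}, reduces the problem to producing the $n$ non-properness polynomials $A_0^i(y) \in \ZZ[y_1,\ldots,y_n]$, each obtained as the leading coefficient in $x_i$ of the implicit equation of the hypersurface $\overline{F_i(\CC^n)} \subset \CC^{n+1}$, together with the final product $\prod_{i=1}^n A_0^i$ whose zero locus is $\cJ_f$.

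First I would establish the claimed output size estimates. A Bezout-type argument on generic fibres of $f$ shows that the implicit equation of $\overline{F_i(\CC^n)}$ has degree at most $d^n$ in $x_i$ and at most $\OO(d^{n-1})$ in each $y_j$; in particular each $A_0^i$ has total degree $\OO(d^n)$ with at most $\OO(d^{n^2})$ monomials. For the bitsize, I would apply Hadamard- and Mignotte-style height bounds to the Macaulay resultant that realizes the elimination, obtaining a per-coefficient bitsize of $\sOO((nd)^{n-1}\tau)$, which matches the stated output size.

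Next, I would analyze the main computational step. For each $i \in [n]$, I realize the implicit equation of $\overline{F_i(\CC^n)}$ as a multivariate Macaulay resultant of $\{f_j - y_j : j \in [n]\}$ eliminating the $n-1$ variables $\{x_j : j \ne i\}$. This is the determinant of a matrix of dimension $N = \OO((nd)^{n-1})$ whose entries lie in $\ZZ[y_1,\ldots,y_n,x_i]$ with degree $\OO(d)$ and bitsize $\OO(\tau)$. Using fast linear algebra over this polynomial ring, implemented via evaluation–interpolation with fast multivariate arithmetic, the determinant is computed in $\sOO(N^{\omega})$ arithmetic operations on polynomials whose sizes are bounded by the output estimates; each such polynomial operation costs $\sOB(d^{n^2}(\tau+nd))$ bit operations. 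The per-iteration cost is therefore $\sOB((nd)^{(n-1)\omega} \cdot d^{n^2} \cdot (\tau+nd))$. Summing over the $n$ iterations, extracting each $A_0^i$ as a linear pass on the coefficients, and multiplying the $n$ polynomials $A_0^i$ together (a cost absorbed into the output size via FFT-based multiplication) yields the announced bit complexity bound.

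The main obstacle is tracking intermediate polynomial arithmetic tightly: although the Macaulay matrix entries are small, the minors and pivots produced during the determinant evaluation live in $\ZZ[y_1,\ldots,y_n]$ and can already reach the full output degree $\OO(d^n)$ and bitsize $\sOO((nd)^{n-1}\tau)$, so every such operation alone contributes the factor $d^{n^2}(\tau+nd)$. Preventing further blow-up requires fast multivariate arithmetic rather than naive representation. The remaining $n$-dependent factors $n^{n(\omega+1)-\omega+1}$ and $2^n$ in the stated bound aggregate the matrix-dimension contribution $N^{\omega} = \OO(n^{(n-1)\omega} d^{(n-1)\omega})$, the polynomial-style height-bound factors from the $n-1$ eliminations, the sum over $n$ iterations, and the combinatorial overheads of extracting leading coefficients and forming the final $n$-fold product.
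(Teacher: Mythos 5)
Your overall skeleton is the same as the paper's (realize each $A_0^i$ via a Macaulay-matrix elimination of the $x_j$, $j\neq i$, bound the size of the resultant, and account for the linear algebra), but the core complexity step is not justified as written, and this is a genuine gap. First, the Macaulay resultant is not the determinant of the Macaulay matrix $M$: it is the exact quotient $\det M(y,x_i)/\det M_1(y,x_i)$ for a distinguished submatrix $M_1$, and your sketch never handles this extraneous factor. Second, and more importantly, your per-iteration bound rests on the claim that the determinant can be computed in $\sOO(N^{\omega})$ ring operations ``on polynomials whose sizes are bounded by the output estimates.'' That is exactly what fails: the intermediate minors, and indeed $\det M$ itself, are \emph{not} bounded by the size of $R_i=\det M/\det M_1$ --- their degree in $x_i$ can reach $N\cdot\OO(d)\gg d^n$ and their degree in the $y$-variables can reach $\OO(N)$ --- and fast multivariate arithmetic does not prevent this growth. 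Moreover, $N^{\omega}$-type determinant algorithms require divisions (a field), so over $\ZZ[y][x_i]$ one must either work fraction-free (with minor-sized intermediates) or argue via evaluation; you invoke ``evaluation--interpolation'' in passing, but then the correct accounting is (number of evaluation points) $\times$ (cost of a scalar determinant), not $N^{\omega}$ times the output size. Your final paragraph, which declares that the missing factors $2^n n^{n(\omega+1)-\omega+1}$ ``aggregate'' various overheads, is reverse-engineering the stated bound rather than deriving it.

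The paper's proof resolves precisely these two points: it evaluates $(y,x_1)$ at $\OO((2n)^n d^{n^2})$ integer points of bitsize $\sOO(n\lg d)$ (the number of points being governed by the monomial count of $R_1$, not of $\det M$), computes at each point the two \emph{integer} determinants $\det M(p)$ and $\det M_1(p)$ in $\sOB\big((nd)^{(n-1)\omega}(\tau+nd)\big)$ via Storjohann's algorithm, takes the exact integer quotient to get $R_1(p)$, and only then interpolates $R_1$, whose degree and bitsize bounds (from \cite{mst-srur-17}) control the interpolation cost. Multiplying the number of points by the per-point determinant cost yields $\sOB\big(2^n n^{n(\omega+1)-\omega} d^{n^2+(n-1)\omega}(\tau+nd)\big)$ per resultant, and the factor $n$ for the $n$ resultants gives the stated exponent. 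To repair your argument you would need to replace the ``$N^{\omega}$ polynomial operations'' accounting by this point-wise evaluation scheme (or otherwise rigorously bound the intermediate minors and handle the division by $\det M_1$); your output-size estimates for the $A_0^i$ are fine and agree with the paper.
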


For the special case of two variables a slightly better bound is possible.
\begin{theorem}
  \label{thm:Jelonek_2-complexity}
  Let $f_1, f_2 \in \ZZ[x_1, x_2]$ be  polynomials of size $(d, \tau)$.
  Alg.~\ref{alg:Jelonek_2} computes the Jelonek set $\cJ_f$ in $\sOB(d^6\tau)$.
  It consists of a polynomial in $\ZZ[y_1, y_2]$ of size $( \OO(d), \sOO(d \tau))$.
\end{theorem}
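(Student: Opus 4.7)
The algorithm is a direct implementation of Theorem~\ref{th:Je2}: it computes the two parametric resultants $P_i(y_1, y_2, x_i) := \res_{x_j}(f_1 - y_1, f_2 - y_2)$ for $\{i,j\} = \{1,2\}$, reads off the leading coefficients $P_{1,0}, P_{2,0} \in \ZZ[y_1, y_2]$ with respect to $x_1$ and $x_2$, and outputs their product as the defining polynomial of $\cJ_f$. The proof plan therefore splits into bounding the size of the intermediate and final polynomials, and then bounding the cost of producing them.

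For the size bounds, I would view $f_i - y_i$ as a polynomial in $x_j$ of degree $n_j \leq d$ over the ring $\ZZ[x_i, y_1, y_2]$. The associated Sylvester matrix has size at most $2d \times 2d$, and each entry is a polynomial in $x_i$ of degree at most $d$ and bitsize at most $\tau$, perturbed linearly by $\pm y_\ell$ in selected positions. Classical degree bounds for resultants give $\deg_{x_i}(P_i) \leq d^2$; since $y_\ell$ occurs in at most $d$ rows of the Sylvester matrix, we also obtain $\deg_{y_\ell}(P_i) \leq d$. Hadamard's inequality applied to the matrix bounds the bitsize of every coefficient of $P_i$ by $\sOO(d\tau)$. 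Consequently each $P_{i,0}(y_1, y_2)$ is of size $(O(d), \sOO(d\tau))$, and so is the product $P_{1,0} \cdot P_{2,0}$.

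For the complexity, I would compute $P_1$ (and symmetrically $P_2$) by partial evaluation on $(y_1, y_2)$ followed by interpolation. Select $O(d^2)$ integer pairs $(b_1, b_2)$ of bitsize $O(\log d)$ and specialize to the bivariate resultants $R_{b_1, b_2}(x_1) := \res_{x_2}(f_1 - b_1, f_2 - b_2) \in \ZZ[x_1]$. Each of these is itself handled by evaluation-interpolation in $x_1$: at $O(d^2)$ small points one computes a univariate resultant of two degree-$d$ polynomials with integer coefficients of bitsize $\sOO(\tau)$, which by the subresultant algorithm costs $\sOB(d^2 \tau)$. The total cost per pair is therefore $\sOB(d^4 \tau)$, and summing over the $O(d^2)$ outer pairs yields $\sOB(d^6 \tau)$. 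Interpolation in $(y_1, y_2)$ then reconstructs $P_1$ at dominated cost, $P_{1,0}$ is simply its coefficient of $x_1^{d^2}$, and the final multiplication $P_{1,0} \cdot P_{2,0}$ in $\ZZ[y_1, y_2]$ is negligible since both factors have size $(O(d), \sOO(d\tau))$.

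The main technical hurdle will be certifying well-posedness of the outer evaluation-interpolation: at some bad pairs $(b_1, b_2)$ the leading coefficient of $f_1 - b_1$ or $f_2 - b_2$ in $x_2$ could vanish and cause the specialized resultant to drop in degree, which would invalidate the reconstruction of $P_1$. However, the bad pairs lie in a proper algebraic subvariety of $\CC^2$ defined by a polynomial of degree $O(d)$, so a Schwartz--Zippel argument guarantees that an integer grid of side $O(d^2)$ and bitsize $O(\log d)$ provides $O(d^2)$ good specializations, without affecting the final bound.
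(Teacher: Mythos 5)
Your overall strategy is sound but genuinely different from the paper's: the paper computes $r_1=\res_{x_2}(g_1,g_2)$ directly by a fast subresultant scheme over $\ZZ[y_1,y_2,x_1]$, bounding the cost by $\sOO(d)$ multiplications of trivariate polynomials of degrees $(\OO(d),\OO(d),\OO(d^2))$ and bitsize $\sOO(d\tau)$, each costing $\sOB(d^5\tau)$, whereas you use a two-level evaluation--interpolation (modular) scheme. Your size bounds (degree $\OO(d)$ in each $y_\ell$ via the Sylvester matrix, bitsize $\sOO(d\tau)$ via Hadamard) match the paper's. The modular route is perfectly legitimate and arguably more implementation-friendly, but it needs more careful bookkeeping than the direct route, and two points in your write-up need fixing.

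First, the bitsize accounting at the inner evaluation is off: if the $x_1$-evaluation points range over an integer grid of size $\OO(d^2)$ (bitsize $\OO(\log d)$), then the coefficients of $f_i(a,x_2)-b_i$ have bitsize $\sOO(\tau+d)$, not $\sOO(\tau)$, so each univariate resultant costs $\sOB(d^2\tau+d^3)$ and the total becomes $\sOB(d^6\tau+d^7)$. That extra $d^7$ is not absorbed by $\sOB(d^6\tau)$ unless $\tau=\Omega(d)$, so as written your analysis does not reach the stated bound; you either need to account for it (the paper's direct subresultant computation avoids it) or argue it away. Second, your ``main technical hurdle'' is misplaced: specializing $(y_1,y_2)\mapsto(b_1,b_2)$ can never drop the degree in $x_2$, because $y_i$ enters $f_i-y_i$ only in the coefficient of $x_2^0$, so $\lc_{x_2}(f_i-b_i)=\lc_{x_2}(f_i)$ and the specialization always commutes with the resultant --- no Schwartz--Zippel argument is needed there. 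The genuine degree-drop risk is at the inner specialization $x_1=a$, where $\lc_{x_2}(f_i)\in\ZZ[x_1]$ may vanish; this is easily handled by discarding the at most $2d$ roots of $\lc_{x_2}(f_1)\lc_{x_2}(f_2)$, but it is this step, not the choice of $(b_1,b_2)$, that requires the argument. A last small point: $P_{1,0}$ is the leading coefficient of $r_1$ in $x_1$, which need not sit in degree exactly $d^2$, so read off $\lc_{x_1}$ of the interpolated polynomial rather than the coefficient of $x_1^{d^2}$.
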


\begin{algorithm2e}[t]
  %\scriptsize \dontprintsemicolon \linesnumbered
  \SetFuncSty{textsc} \SetKw{RET}{{\sc return}} \SetKw{OUT}{{\sc output \ }} \SetKwInOut{Input}{Input}
  \SetKwInOut{Output}{Output}
    \SetKwInOut{Require}{Require}
    % \SetVline
    \Input{$f = (f_1,\ldots, f_n) \in \ZZ[x_1,\ldots, x_n]^n$}
    %%
    %% \Require{none}
    %%
    \Output{The set of non-properness of $f : \CC^n \to \CC^n$}

    \BlankLine

    \tcc{Notice that $F_i \in (\CC[y])[x]$, where $y = (y_1, \dots, y_n)$.}
    $F_{1} \gets f_{1}(x) - y_{1}, \dots, F_{n} \gets f_{n}(x) - y_{n}$\;

    \For{$i \in [n]$}
    {
      \tcc{Eliminate all the $x$-variables but $x_{i}$.}
      $R_{i} = \res_{x_i}(F_{1}, \dots, F_{n}) = \sum_{k=0}^{N_i}A_k^i(y) x_i^{N_i -k} \in (\CC[y])[x_{i}]$
    }
    \BlankLine
    \RET  $\prod_{i=1}^{n} A_{0}^{i}(y) \in \CC[y]$\;
    \caption{\textsc{Jelonek\_n$(f_1, \ldots,f_n)$}}
  \label{alg:Jelonek_n}
\end{algorithm2e}

\begin{algorithm2e}[ht]
  %\scriptsize \dontprintsemicolon \linesnumbered
  \SetFuncSty{textsc} \SetKw{RET}{{\sc return}} \SetKw{OUT}{{\sc output \ }}
	\SetKwInOut{Input}{Input}
	\SetKwInOut{Output}{Output}
    \SetKwInOut{Require}{Require}
    % \SetVline
    \Input{$F = (f_1, f_2) \in \ZZ[x_1, x_2]^2$}
    %%
    % \Require{none}
    %%
    \Output{The set of non-properness of $F : \CC^2 \to \CC^2$}

    \BlankLine

    $g_1 \gets f_1(x_1, x_2) - y_1 $ \;
    $g_2 \gets f_2(x_1, x_2) - y_2$ \;

    $r_1 \gets \res_{x_2}(g_1, g_2) \in (\ZZ[y_1, y_2])[x_1] $\;
    $r_2 \gets \res_{x_1}(g_1, g_2) \in (\ZZ[y_1, y_2])[x_2] $\;

    \tcc{Return the leading coefficient with respect to $x_1$, resp. $x_2$, of $r_1$ and $r_2$}
    $p \gets \lc_{x_1}(r_1) \cdot \lc_{x_2}(r_2)  \in \ZZ[y_1, y_2]$ \;

    \RET $p$ \;
    \caption{\textsc{Jelonek\_2$(f_1, f_2)$}}
  \label{alg:Jelonek_2}
\end{algorithm2e}

\section{Preliminaries}\label{sec:prel}

We present the necessary terminology we need for the following sections.

\subsection{Polytopes, Minkowski sums, and mixed volume}
\label{subs:faces}
A \emph{polytope}, also called \emph{polygon}, $\Delta$ in $\RR^2$ is a bounded intersection of closed
half-spaces of the form $\{a_0\leq a_1X_1 + a_2X_2\}\subset\RR^2$, where
$a_0,a_1,a_2\in\RR$.
The latter are the  \emph{supporting half-spaces} of
$\Delta$ and  their boundary intersects the boundary of $\Delta$, $\partial\Delta$, at a connected set of
$\Delta$ that we call \emph{face}.
Thus, any face $F$ of $\Delta$  minimizes a
function
$a^*:\Delta\rightarrow\RR$,
given by 
$(X_1,X_2)\mapsto - a_0 + a_1X_1+a_2X_2$.
In this case $a=(a_1,a_2)\in\RR^2$ is an interior normal vector to $F$
and we say that it \emph{supports} $F$.

The \emph{Minkowski sum} $A \oplus B$ of two subsets $A,B\subset\RR^n$ is the
set $\{a+b~|~a\in A,~b\in B\}$. Let $A_1$ and $A_2$ denote the respective convex hulls of two
finite sets in $\ZZ^2$ and let their Minkowski sum be $A = A_1 \oplus A_2$. The
\emph{summands} of $A$ refers to the pair $(A_1,A_2)$. If $\Gamma$ is a face of
$A$, that is $\Gamma \prec A$, then we denote the summands of $\Gamma$ by
$(\Gamma_1, \Gamma_2)$, such that $\Gamma = \Gamma_1 \oplus \Gamma_2$,
$\Gamma_1 \prec A_1$, and $\Gamma_2 \prec A_2$.

\begin{figure}[t]
  \centering
  \begin{tikzpicture}
    \tikzstyle{bluefill1} = [fill=blue!20,fill opacity=0.8]          % style of one filling
    \tikzstyle{blackfill1} = [fill=gray,fill opacity=0.4]          % style of one filling
    \tikzstyle{greyfill1} = [fill=gray!20,fill opacity=0.8]          % style of one filling

	\begin{scope}[scale = 0.9]

    \begin{scope}[]
			% Axes

	\draw[arrows=->,line width=0.3 mm, dotted] (0,0)-- (0,1.3);

	\draw[arrows=->,line width=0.3 mm, dotted] (0,0)-- (1.8,0);

	\node[vertex, color = gray] (a) at (0.5, 0) {};
	\node[vertex, color = gray] (b) at (1, 0) {};
	\node[vertex, color = gray] (c) at (0, 0.5) {};
	\node[vertex, color = gray] (c) at (0, 1) {};
	\node[vertex, color = gray] (c) at (1.5, 0) {};

			\filldraw[bluefill1,line width=0.0 mm ] (0., 0.)--(1., 0.5)--(1.5, 1) -- (1, 1);	% Drawing of a cylce

			\node[vertex,label={[labelsty]below:{\normalsize $a_0$}}] (1) at (0., 0.) {};
			\node[vertex] (2) at (1., 0.5) {};
			\node[vertex] (3) at (1.5, 1) {};
			\node[vertex] (4) at (1, 1) {};
			\draw[edge, color = blue] (1)edge(2) (2)edge(3) (3)edge(4)  (4)edge(1);

			\node[label={[labelsty]right:{ $a_1$}}] at (0.5, 0.25) {};
			\node[label={[labelsty]below:{ $a_2$}}]  at (1.5, 1) {};
			\node[label={[labelsty]right:{ $a_3$}}]  at (1, 1.25) {};
			\node[label={[labelsty]above:{ $a_4$}}]  at (.5, .5) {};

		\end{scope}
		\node at (2,.5) {$ \oplus$};
		\begin{scope}[xshift=2.5cm]

			% Axes

	\draw[arrows=->,line width=0.3 mm, dotted] (0,0)-- (0,2.6);

	\draw[arrows=->,line width=0.3 mm, dotted] (0,0)-- (5,0);

	\node[vertex, color = gray] (a) at (0.5, 0) {};
	\node[vertex, color = gray] (b) at (1, 0) {};
	\node[vertex, color = gray] (c) at (0, 0.5) {};
	\node[vertex, color = gray] (c) at (0, 1) {};
	\node[vertex, color = gray] (c) at (0, 1.5) {};
	\node[vertex, color = gray] (c) at (0, 2) {};
	\node[vertex, color = gray] (c) at (1.5, 0) {};
	\node[vertex, color = gray] (c) at (2, 0) {};
	\node[vertex, color = gray] (c) at (2.5, 0) {};
	\node[vertex, color = gray] (c) at (3, 0) {};
	\node[vertex, color = gray] (c) at (3.5, 0) {};
	\node[vertex, color = gray] (c) at (4, 0) {};
	\node[vertex, color = gray] (c) at (4.5, 0) {};
	\node[vertex, color = gray] (c) at (5, 0) {};

			\filldraw[blackfill1,line width=0.0 mm ] (0., 0.)--(2, .5)--(3.5, 1) -- (4.5, 1.5) -- (5, 2)-- (2, 2) ;	% Drawing of a cylce

			\node[vertex,label={[labelsty]below:$b_0$}] (1) at (0., 0.) {};
			\node[vertex] (2) at (2, .5) {};
			\node[vertex] (3) at (3.5, 1) {};
			\node[vertex] (4) at (4.5, 1.5) {};
			\node[vertex] (5) at (5, 2) {};
			\node[vertex] (6) at (2, 2) {};
			\draw[edge, color = black ] (1)edge(2) (2)edge(3) (3)edge(4)  (4)edge(5)
			 (5)edge(6)  (6)edge(1);

			\node[label={[labelsty]right:{ $b_1$}}] at (0.6, 0) {};
			\node[label={[labelsty]below:{ $b_2$}}] at (2.8, .9) {};
			\node[label={[labelsty]below:{ $b_3$}}] at (4, 1.3) {};
			\node[label={[labelsty]below:{ $b_4$}}] at (5, 2) {};
			\node[label={[labelsty]right:{ $b_5$}}] at (2.8, 2.3) {};
			\node[label={[labelsty]right:{ $b_6$}}] at (0.6, 1.5) {};
		\end{scope}
		\node at (8,1) {$=$};
		\begin{scope}[xshift=8cm]

			\filldraw[greyfill1,line width=0.0 mm ] (0., 0.)--(2, .5)--(3.5, 1) -- (5.5, 2) -- (6.5, 3)-- (3, 3) ;	% Drawing of a cylce

			\node[vertex] (1) at (0., 0.) {};
			\node[vertex] (2) at (2, .5) {};
			\node[vertex] (3) at (3.5, 1) {};
			\node[vertex] (4) at (5.5, 2) {};
			\node[vertex] (5) at (6.5, 3) {};
			\node[vertex] (6) at (3, 3) {};
			\draw[edge] (1)edge(2) (2)edge(3) (3)edge(4)  (4)edge(5)
			 (5)edge(6)  (6)edge(1);

			\node[label={[labelsty]right:{ $a_0\oplus b_1$}}] at (0.6, 0) {};
			\node[label={[labelsty]right:{ $a_0\oplus b_2$}}] at (2.2, 0.5) {};
			\node[label={[labelsty]right:{ $a_1\oplus b_3$}}] at (4, 1.3) {};
			\node[label={[labelsty]right:{ $a_4\oplus b_6$}}] at (0.6, 2.3) {};
			\node[label={[labelsty]right:{ $a_2\oplus b_4$}}] at (6, 2.5) {};
			\node[label={[labelsty]right:{ $a_3\oplus b_5$}}] at (3.4, 3.2) {};

		\end{scope}

	\end{scope}
  \end{tikzpicture}

  \caption{Two Newton polytopes and  their Minkowski sum. They correspond
    to the polynomials of the map in Section~\ref{subsec:example}. The edges of the blue, resp. the black, polytope are labeled by $a_i$, resp.  $b_j$, for  $i,j\neq 0$.
    The vertices that correspond to $(0,0)$ are  $a_0$, and $b_0$.}
  \label{fig:pol}
\end{figure}
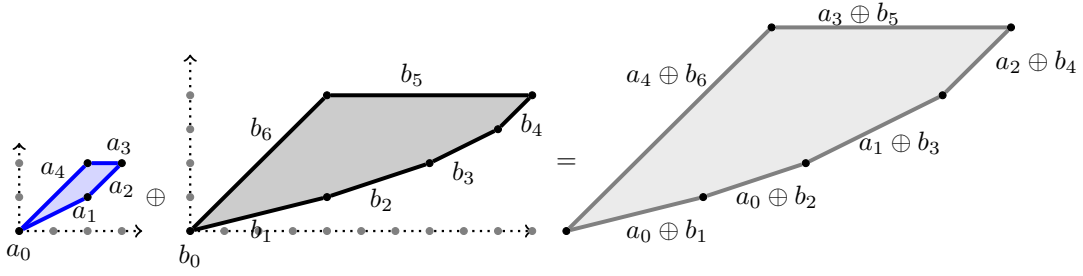

\subsubsection{Mixed volume}\label{ssubs:mixed}
Given a convex set $\Delta\subset\RR^2$, let $\Vol (\Delta)$ be its fixed
and Lebesgue measure endowed in $\RR^2$.
Minkowski's \emph{mixed volume} is the unique real-valued multi-linear, with
respect to the Minkowski sum, function of two convex sets
$\Delta_1,\Delta_2\subset\RR^2$, whose value, if $\Delta_1 = \Delta_2 = \Delta$,
equals $2\Vol(\Delta)$.
We denote the  mixed
volume by $\mathtt{MV}(\Delta_1,\Delta_2)$ and we can compute it  using the
inclusion-exclusion formula
\[
  \Vol(\Delta_1 \oplus \Delta_2) - \Vol(\Delta_1) - \Vol(\Delta_2).
\]

If $\Delta_1 \oplus \Delta_2$ is a line segment or if $\Delta_i$ is
a point for some $i\in \{1,2\}$, then $\mathtt{MV}(\Delta_1,\Delta_2)=0$. The other direction of this statement is
also true; it is a particular case of Minkowski's theorem for the
higher-dimensional mixed volume, see~\cite[Section 2]{Kho16}.

A pair $(\Delta_1,\Delta_2)$ is \emph{independent} if
$\mathtt{MV}(\Delta_1, \Delta_2)\neq 0$; or, equivalently, if
$\dim (\sum_{i\in I} \Delta_i)\geq |I|$ for all $I\subset \{1,2 \}$. A pair is
\emph{dependent} if it  is not independent.

\subsubsection{Characterization of the faces}
\label{subs:long-short}

In what follows, we distinguish types of edges of the Minkowski sum of $\Delta:=\Delta_1 \oplus \Delta_2$.
The following definition
details these types, while Figure~\ref{fig:edge-partition} gives a pictorial overview.

\begin{figure}[ht]
  \centering
  \includegraphics[scale=0.9]{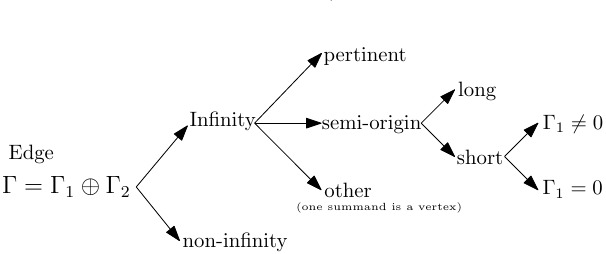}
  \caption{The partition of the edges that \sjalg
    exploits. $\Gamma$ is an edge of the Minkowski sum and its summands are
    $\Gamma_{1}$ and $\Gamma_{2}$. }
  \label{fig:edge-partition}
\end{figure}

\begin{definition}
  \label{def:various-faces}
  Let $\Delta = \Delta_1 \oplus \Delta_2$.
  A face $\Gamma \prec \Delta$ with summands $(\Gamma_1, \Gamma_2)$ is
  an \emph{edge} if $\dim(\Gamma) = 1$. An edge is
  \begin{itemize} %[labelindent=5mm,topsep=0mm,noitemsep]
  \item \emph{long} if both of its summands have dimension 1, that is if
    $\dim(\Gamma_1) = \dim(\Gamma_2) = 1$.
  \item \emph{short} if it is not long.
  \item \emph{pertinent} if it is long and both of its summands do not contain the origin, that is
    $(0, 0) \not\in \Gamma_1$ and $(0, 0) \not\in \Gamma_2$.
  \item \emph{semi-origin} if at least one of its summands contains the origin, that is
  $(0, 0) \in \Gamma_1$ or $(0, 0) \in \Gamma_2$.
  \item \emph{origin} if both of its summands contain the origin, that is
  $(0, 0) \in \Gamma_1$ and $(0, 0) \in \Gamma_2$.
  \item \emph{infinity} if the corresponding (inner) normal vector has a negative coordinate.
  %if all of its supporting vectors have a negative coordinate.
  \end{itemize}
\end{definition}

%\begin{remark}\label{rem:infinity_faces}
%  If  $a:=(a_1,a_2)\in\QQ^2$ supports an infinity semi-origin edge of $A$, then it satisfies $a_1a_2<0$. Hence, a pertinent edge is also infinity.
%\end{remark}

\begin{example}\label{ex:faces}
Consider the three (Newton) polytopes in Figure~\ref{fig:pol}.
The third polytope is the
Minkowski sum of the first two and we  have the following characterization of its edges:
\begin{itemize}
  \item Semi-origin long edges: $a_1\oplus b_3$, $a_4\oplus b_6$.
  \item Origin long edges: $a_4\oplus b_6$.
  \item Semi-origin short edges: $a_0\oplus b_1$, $a_0\oplus b_2$.
  \item Origin short edges: $a_0\oplus  b_0$, $a_0\oplus b_1$.
  \item Pertinent edges: $a_2\oplus b_4$, $a_3\oplus b_5$.
%  \item Almost semi-origin edges: $a_2\oplus b_4$.
  \item All of the edges are infinity edges.
\end{itemize}
\end{example}

\subsection{Polynomials restricted to faces}
\label{ssubs:restr}
Let  $P\in\KK[x_1^{\pm 1},x_2^{\pm 1}]$ be a bivariate Laurent polynomial, that is
\[ P(x) = \sum_{a \in\ZZ^2} c_a \, x^a
  = \sum_{(a_1, a_2) \in\ZZ^2} c_a \, x_1^{a_1} x_2^{a_2} ,\]
where  $c_a\in\KK$.
The \emph{support} of $P$ is the set $\supp(P) = \{a \in\ZZ^2~|~c_a\neq 0\}$.
The Newton polytope of $P$ is the convex hull of its support;
we denote it by  $\NP(P)$.

Consider the pair of polynomials
$f :=(f_1, f_2)\in\KK[x_1^{\pm 1},x_2^{\pm 1}]^2$,
with non-zero constant terms such that
\begin{equation}
  \label{eq:f1-f2}
  f_1 = \sum\nolimits_{a \in S_1} c_{1, a} x^{a}
  \quad \text{ and } \quad
  f_2 = \sum\nolimits_{b\in S_2} c_{2, a} x^{b} ,
\end{equation}
where $S_i = \supp(f_i) \subset \ZZ^2$.
The corresponding Newton polytopes (in our case polygons)
are $\Delta_i = \NP(f_i)$, for $i \in \{1,2\}$.
Also let $\NP(f) := \Delta = \Delta_1 \oplus \Delta_2 $.
For any face $\Gamma \prec \Delta$ with summands $(\Gamma_1, \Gamma_2)$,
%\footnote{Maybe to put $\subset$ instead of $\prec$}
we denote by $f_{\Gamma_1}$ the restriction of $f_1$ to those monomial
terms $c_{1, a} x^{a}$ for which  $a \in \Gamma_1 \cap \ZZ^2$.
Similarly for $f_{\Gamma_2}$.
We also write $f_\Gamma$ for the
pair
$(f_{\Gamma_1}, f_{\Gamma_2})\in\KK[x_1^{\pm 1},x_2^{\pm 1}]^2$.

\section{Decomposing the Jelonek set}
\label{sec:f-mult}

Let $f:\KK^2\to\KK^2$ be a dominant polynomial map sending $(0,0)$ to
$(\KK^*)^2$. The main goal of this section is to describe $\cJ_f$ in
terms of the multiplicities and the existence of solutions of polynomial systems resulting from
suitable toric transformation of $f$.
The results of this section contribute to the
correctness proof of the main algorithm in Sec.~\ref{sec:mult-set-computation}.

We keep using $\Delta_1$, $\Delta_2$, and $\Delta$ to denote $\NP(f_1)$, $\NP(f_2)$, and
$\NP(f_1) \oplus \NP(f_2)$, respectively.

\subsection{Toric change of variables using edges}
\label{subsec:base-change}

To each edge $\Gamma\prec\Delta$, we introduce a toric change of coordinates $U\in\SL(2,\ZZ)$
to deduce a description of the points in the preimage of
$f$ that escape to infinity. 

\begin{figure}[t]
  \centering
  \includegraphics[scale=1.6]{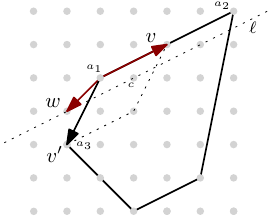}
  \caption{Toric change of basis. The face $\Gamma$ is the edge
      delimited by $a_1$ and $a_2$, while $\Gamma'$ is delimited by $a_1$ and
      $a_3$. The corresponding primitive vectors are $v$ and $v'$, respectively.
      The vectors $v$ and $w$ consist a basis of $\ZZ^2$. In addition they
      define a cone that contains all the lattice points of the polygon. Thus,
      the toric change of basis consists of the  vectors $v$ and $w$ and
      $\tilde{e} = (v, w)$. }
  \label{fig:basis}
\end{figure}

Assume that $(\Gamma_1,\Gamma_2)$ is the pair of summands of $\Gamma$, and
let $a_1$ be any endpoint of $\Gamma$. We use $v$ to denote the primitive vector
starting from $a_1$ along the direction of $\Gamma$. 

Then, there exists a vector $w\in\ZZ^2$ satisfying
  $\det(v,w)=\pm 1$ and such that for any point in $\Delta\oplus \{-a_1\}$
  spanned by the basis $\tilde{e}:=(v,w)$, the coefficient in front of $w$ is
  positive.
  In other words, $w$ points towards the direction of the polygon.
    We call $(v,w)$ a \emph{$\Gamma$-basis for $\Delta$}.

The linear transformation
$U:\ZZ^2\to\ZZ^2$ maps the new basis $\tilde{e}$ to the canonical basis of $\ZZ^2$.
We can also relate the computatin of $\tilde{e}$ to the computation
of the shorted vector problem in $\ZZ^{2}$ \cite[Lecture~VIII]{Yap-algebra-book}.

Now consider the matrix $T$, where
\[
  T =
  \big(\begin{smallmatrix} t_{11} & t_{12}\\ t_{21} & t_{22} \end{smallmatrix}\big)
  =
  \big(\begin{smallmatrix} v_{1} & v_{2}\\ w_{1} & w_{2} \end{smallmatrix}\big)
  \in \SL(2,\ZZ) .
\]
It corresponds to  the following change of variables $z = x^{T \binom{1}{1}}$.
The transformation $U$ that we are looking for is the transpose of the inverse of $T$,
that is $U = T^{-\top}$ which induces an isomorphism $(\KK^*)^2\to (\KK^*)^2$, given by
the transformation
\[
 (z_1,~z_2) \mapsto \big( z_{1}^{w_{2}/D}z_{2}^{-v_{2}/ D},~ z_{1}^{-w_{1}/D}z_{2}^{v_{1}/D}\big),
\]
where $D = \det(T) =  \pm 1$ is the determinant of $T$.
We deduce that $T$,
and thus also $U$, depends on
$\Gamma$ and $\Delta$. 

\begin{definition}\label{def:Trg}
  The set of \emph{$\Gamma$-toric transformations} is the subset of all matrices
  $\big(\begin{smallmatrix} w_{2}/D & -w_{1}/D\\ -v_{2}/D & v_{1}/D \end{smallmatrix}\big) \in\SL(2,\ZZ)$,
 where $(v,w)$ is a $\Gamma$-basis for $\Delta$.
  We denote this set by $\Trg$.
\end{definition}

%$\Trsf_{E\prec A}(\ZZ)$, or simply by $\Trg$ whenever $A$ is fixed.

We also have the following immediate consequence. %(see Example~\ref{ex:transf1}).

\begin{lemma}\label{lem:vector-change}
  Let $\alpha\in\ZZ^2$ be the primitive integer vector supporting an edge $\Gamma \prec \Delta$.
  Then, for any $U\in\Trg$, $U\Gamma$ is an edge of $U\Delta$.  Moreover,
  there is a $U$ such that the vector $U^{-\Tr}\cdot\alpha $ equals $(0,1)$ and it
  supports $U\Gamma$.
\end{lemma}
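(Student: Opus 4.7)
The plan is to unwind the defining relation $U = T^{-\top}$ between the matrix $U \in \Trg$ and the basis matrix $T = \bigl(\begin{smallmatrix} v_1 & v_2 \\ w_1 & w_2 \end{smallmatrix}\bigr)$ and use the interpretation of $\alpha$ as a primitive inner normal to $\Gamma$. Since $U = T^{-\top}$, transposition and inversion commute to give $U^{-\top} = T$. Writing $\alpha$ as a column, one computes $T\alpha = (v\cdot\alpha,\, w\cdot\alpha)^{\top}$, so the bulk of the work is to verify that both coordinates take the required values.

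The first coordinate is immediate: because $v$ is a primitive vector along the edge $\Gamma$ and $\alpha$ is an inner normal of $\Gamma$, we have $v\cdot\alpha = 0$. For the second coordinate, I would argue as follows. Property \ref{nb:i} says $(v,w)$ is a $\ZZ$-basis of $\ZZ^2$, so the dual basis consists of the unique $\ZZ$-linear functionals $v^*, w^*$ satisfying $v^*\cdot v = 1$, $v^*\cdot w = 0$, $w^*\cdot v = 0$, $w^*\cdot w = 1$. Since $\alpha$ is a primitive integer vector annihilating $v$, it must equal $\pm w^*$. Property \ref{nb:ii} places $A - a_1$ inside the cone $\{b_1 v + b_2 w : b_1,b_2 \ge 0\}$, and combined with $\alpha$ being an \emph{inner} normal (so $\alpha\cdot(b - a_1) \ge 0$ for all $b \in A$), evaluating at any point of $A-a_1$ with $b_2>0$ forces $\alpha\cdot w \ge 0$. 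Hence the sign is fixed, $\alpha = w^*$, and $\alpha\cdot w = 1$. Therefore $U^{-\top}\alpha = T\alpha = (0,1)^{\top}$.

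It remains to check that $(0,1)$ supports $U\Gamma$ as an inner normal of $UA$, and in particular that $U\Gamma$ is actually a face of $UA$. The key identity is $\alpha = U^{\top}(0,1)^{\top}$, obtained by inverting the equation just proved. For any $b \in A$, a direct calculation gives
\begin{equation*}
(0,1)\cdot\bigl(Ub - Ua_1\bigr) \;=\; (0,1)\,U\,(b - a_1) \;=\; \bigl(U^{\top}(0,1)^{\top}\bigr)\cdot(b - a_1) \;=\; \alpha\cdot(b - a_1) \;\ge\; 0,
\end{equation*}
with equality precisely when $b \in \Gamma$, by definition of $\alpha$ as the inner normal supporting $\Gamma$. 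This means that the face of $UA$ minimized by the linear functional associated with $(0,1)$ is exactly the image $U\Gamma$; combined with the fact that $U$ is an $\SL(2,\ZZ)$-isomorphism (so dimensions are preserved and $U\Gamma$ is one-dimensional), this yields that $U\Gamma$ is an edge of $UA$ supported by $(0,1)$.

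I do not foresee any serious obstacle: the only subtle point is pinning down the \emph{sign} of $\alpha\cdot w$, which requires invoking \ref{nb:ii} together with the orientation convention that $\alpha$ is an inner (rather than outer) normal. Everything else is a direct matrix manipulation using $U^{-\top} = T$ and the definition of the dual basis.
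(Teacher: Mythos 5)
Your proof is correct: the identity $U^{-\top}=T$, the computation $T\alpha=(v\cdot\alpha,\,w\cdot\alpha)^{\top}$ with $v\cdot\alpha=0$, the dual-basis/sign argument pinning down $\alpha\cdot w=1$ via ($\Pi$2) and the inner-normal convention, and the final verification that $(0,1)$ is minimized on $UA$ exactly along $U\Gamma$ are precisely the intended unwinding of the construction of $T$ and $U$. The paper offers no separate argument (the lemma is stated as an immediate consequence of that construction), so your write-up simply supplies the routine details in the same spirit.
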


\begin{remark}
  \label{rem:Ustar-0dim}
  If $f=(f_1,f_2)$ is a pair of bivariate polynomials and $Uf = (Uf_1,Uf_2)$,
  then $\VVKT(f)$ has the same number of isolated points as $\VVKT(Uf)$.
\end{remark}

If $Uf$ consists of Laurent polynomials, then there might be monomials with
negative exponents. We transform them to polynomials by multiplying them by
suitable monomials. We denote this transformation by $\ous f$, which is the map
$\left(U x^{r_1}f_1, U x^{r_2}f_2 \right)$, for suitable $r_1, r_2 \in \ZZ^2$;
in other words we multiply by monomials of the smallest possible degree to clear
the denominators.

The following observations will be useful in the sequel
as they relate the roots of the transformed polynomial system to the
roots of the original one.

 \begin{remark}[Toric transformation and polynomials]
   \label{rem:one-var}

   For any  $f \in \KK[x_1,x_2]^2$, where  $\Delta = \NP(f)$,
   the following hold:
   \begin{enumerate}[(i)]
     \item For any $U\in\Trg$, the system $\ous f_\Gamma =0$ is univariate and $\ous f=0$ is bivariate, and
     \item $\ous f_\Gamma =0$ has a solution $\rho \in\KK^*$ iff
     $\ous f =0$ has a solution $(\rho,0)\in \KK^*\times\{0\}$ iff
     $f_\Gamma = 0$ has a solution in $\TT$.
   \end{enumerate}
 \end{remark}

 \subsection{The topological and algebraic multiplicity sets of $f$}
\label{subs:main}

Let $f:\KK^2\to\KK^2$ be a polynomial map whose Newton polytope is denoted by $\Delta$ and let $U\in\Trg$ for some fixed $\Gamma\prec\Delta$. Define the graph
\[
G:=G(U,f)=\left\lbrace\left. (z,y)\in(\KK^*)^2\times\KK^2~\right|~\ous(f-y)(z)=0 \right\rbrace,
\] 
and consider the projection $\pi:\KK^2\times\KK^2\to\KK^2$, $(z,y)\mapsto y$.

\begin{definition}[\TFKg, \textbf{\TFmult}]
  \label{def:t-face-mult} 

  For any $\rho\in\KK^*$, consider the following set
  \[
  T^{\rho}(\Gamma) := \left\lbrace y\in\KK^2~|~\exists~\{(z_k,y_k)\}_{k\in\NN}\subset G(U, f),~z_k\underset{k\rightarrow\infty}{\longrightarrow}(\rho,0)~\text{ and }~y_k\rightarrow y \right\rbrace .
\]
Then $\TFKgr$ is  the
Euclidean closure $T^{\rho}(\Gamma)$, that is $\TFKgr = \overline{T^{\rho}(\Gamma)}^{E}$.

  The \emph{\TFmult} of $f$ corresponding to $\Gamma$, $\TFKg$,
  is the union of the sets $\TFKgr$, that is
  \[
    \TFKg = \bigcup\nolimits_{\rho} \TFKgr ,
  \]
  where $\rho$ takes all distinct values in $\KK^*$
  satisfying $\ous(f - \tilde{y})((\rho, 0))=0$ for some $\tilde{y}\in\KK^2$.
  For brevity,  we call it \tmult set.
\end{definition}

We prove later in
Thm.~\ref{thm:Jf=union-Tmult} that the Jelonek set of $f$ is formed by the union of all \tmult
sets, running over all the infinity edges of $\NP(f)$.

Depending on the type of the edge $\Gamma\prec\Delta$, a non-empty set $\TFKgr$ can be either a finite set or a semi-algebraic curve.

 We represent those cases in two examples.
\begin{example}\label{ex:top-mult}
Consider the case $g:=\ous(f-y)$, where $g_1= - y_1 + z_1^2 + z_2$ and $g_2 = -y_2+z_1^2$. For any $\rho\in\KK^*$, consider the sequence $z(k):=(\rho,~1/k)$ for $k\in\NN$. If $y(k)=(\rho^2+1/k,~\rho^2)$, then the sequence $\{(z(k),~y(k))\}_{k\in \NN}\subset G$ converges to $(\rho,~0,~\rho^2,~\rho^2)$. Hence, it holds $\TFKgr = \{(\rho^2,~\rho^2)\}$, and thus $\TFKg=\{(\rho^2,~\rho^2)~|~\rho\in\KK\}$.
\end{example}

\begin{example}\label{ex:top-mult2}
Assume now that $g:=\ous(f-y)$ is given by $g_1= 1 - z_1 - y_1z_2$ and $g_2 = 1 + 2z_1 -3z_1^2 -y_2z_2$. On the one hand, for each $i\in\{1,2\}$, it holds $g_i(z_1,0)= 0$ $\Leftrightarrow$ $z_1=1$ making the set $\TFKgr$ empty whenever $\rho \neq 1$. On the other hand, as the sequence $z(k):=(1+1/k,~1/k)$ converges to $(1,~0)$, the sequence $y(k):=(1,~4+3/k)$ satisfies $g(z(k),y(k))=0$, and converges to the line $L:=\{4y_1 = y_2\}$. We deduce that if $\rho = 1$, then $\mathtt{TF}_{\KK}^{\rho}(\Gamma) = \TFKg = L$. 
\end{example}

The following observation follows from the definitions.

\begin{lemma}\label{lem:multip}
  The \tmult set of $f$ corresponding to $\Gamma$, \TFKg, does not depend on the choice of the toric
  transformation $U$ from $\Trg$.
\end{lemma}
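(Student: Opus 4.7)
The plan is to show that any two transformations $U, U' \in \Trg$ yield the same set $\TFKg$ by exhibiting an explicit unimodular substitution of variables that identifies the systems $\ous(f - y)$ obtained from $U$ and $U'$ and that preserves the limiting behavior at $\{z_2 = 0\}$ required by Def.~\ref{def:t-face-mult}.

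First, I would classify the possible relations between two elements of $\Trg$. The primitive vector $v$ along $\Gamma$ is determined, up to sign, by the choice of vertex $a_1$; fixing such a $v$, any two admissible second basis vectors $w, w'$ must both lie on the side of $\RR v$ prescribed by \ref{nb:ii} and together with $v$ form a lattice basis of $\ZZ^2$. Using that $\dim A = 2$, this forces $w' = w + k v$ for some $k \in \ZZ$. A direct computation from the formula $U = T^{-\top}$ in Def.~\ref{def:Trg} then yields $U' = P\,U$ with $P = \big(\begin{smallmatrix} 1 & -k \\ 0 & 1 \end{smallmatrix}\big)$. The case in which $a_1$ admits two equidistant choices (giving $v \mapsto -v$) is handled analogously by composing with a further unimodular matrix; this case is essentially bookkeeping.

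Next, the relation $U' = PU$ corresponds, at the level of the two sets of new coordinates, to the substitution $z_1 = z'_1$ and $z_2 = (z'_1)^{-k} z'_2$, obtained by composing the two toric parametrizations $z \mapsto x$ and $z' \mapsto x$. This substitution is a biregular automorphism of $\KK^* \times \KK$ that fixes the axis $\{z_2 = 0\}$ and sends $(\rho, 0)$ in $z$-coordinates to $(\rho, 0)$ in $z'$-coordinates, so the indexing parameter $\rho$ is preserved. Since the shifts $r_1, r_2$ depend only on $\Gamma$ and not on $U$, one checks via the identity $P\,U\,r_i^\top = U'\,r_i^\top$ that the monomial prefactors $z^{U r_i^\top}$ transform exactly into $z'^{U' r_i^\top}$; hence the substitution carries the system $\ous(f - y)$ written in $U$-coordinates to the system $\ous(f - y)$ written in $U'$-coordinates, as equal polynomial systems after the change of variables.

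Finally, since this substitution is continuous, bijective on $\KK^* \times \KK$, and sends $(\KK^*)^2$ to itself, any sequence $\varrho_k \in (\KK^*)^2$ with $\varrho_k \to (\rho, 0)$ and $\ous(f - y_k)(\varrho_k) = 0$ in $U$-coordinates corresponds bijectively to a sequence $\varrho'_k \in (\KK^*)^2$ with $\varrho'_k \to (\rho, 0)$ and $\ous(f - y_k)(\varrho'_k) = 0$ in $U'$-coordinates, with $y_k$ unchanged. Thus the defining conditions of $\TFKgr$ agree for $U$ and $U'$ at each $\rho$, and taking the union over $\rho$ yields that $\TFKg$ is independent of $U$. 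The main obstacle is the bookkeeping needed to establish the identity $P\,U\,r_i^\top = U'\,r_i^\top$ and to handle the orientation flip $v \mapsto -v$; once these are settled, the result reduces to the geometric fact that $(z_1, z_2) \mapsto (z_1, z_1^{-k} z_2)$ preserves both the axis $\{z_2 = 0\}$ (pointwise in the first coordinate) and the torus $(\KK^*)^2$.
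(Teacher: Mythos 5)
Your proposal is correct, and it reaches the lemma by a more explicit route than the paper. The paper's proof is a short invariance argument: any two members of \Trg are related by a unimodular monomial change of coordinates, which preserves the number and multiplicities of solutions of $\ous(f-y)=0$ in $\KK^*\times\{0\}$ and leaves the restriction to $\Gamma$ unchanged. You instead classify the elements of \Trg concretely --- with $v$ the primitive vector along $\Gamma$ fixed, property ($\Pi$2) together with $\dim A=2$ forces $w'=w+kv$, hence $U'=PU$ with $P=\big(\begin{smallmatrix}1 & -k\\ 0 & 1\end{smallmatrix}\big)$ --- and then exhibit the intertwining substitution $(z_1,z_2)=\big(z_1',(z_1')^{-k}z_2'\big)$, checking it is a biregular automorphism of $\KK^*\times\KK$ that preserves $(\KK^*)^2$ and fixes $\{z_2=0\}$ pointwise in the first coordinate. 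What this buys is a direct verification of Def.~\ref{def:t-face-mult}: that definition is about limits of torus solutions at points of $\KK^*\times\{0\}$, i.e.\ outside the torus where a general monomial map lives, so one genuinely needs the identification to extend regularly across the axis; your unipotent $P$ supplies exactly that, whereas the paper's appeal to unimodularity and equality of the restricted face systems leaves this step implicit. Two minor points: the identity $PUr_i^{\top}=U'r_i^{\top}$ is immediate from $U'=PU$, so there is nothing to check there; and in the equidistant-vertex case the flip $v\mapsto -v$ also changes $a_1$ and hence the shifts $r_1,r_2$, so the two systems differ in addition by monomial factors $z_1^{\pm L_i}$ (units on $\KK^*\times\KK$) and the substitution sends $(\rho,0)$ to a different point $(\rho',0)$; thus in that case the individual sets $\TFKgr$ are permuted rather than matched $\rho$-by-$\rho$, but the union over $\rho\in\SolKg$ --- which is all the lemma asserts --- is still preserved, so your ``bookkeeping'' claim stands.
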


To compute \tmult sets we need to introduce the multiplicity of a solution of a polynomial system.

\begin{definition}{\cite[Ch.~4, Def.~2.1]{CLO2}}
    Let $g := (g_{1}, g_{2}) \in \KK[z_{1}, z_{2}]^{2}$ and $J = \langle g_1, g_{2}\rangle$
    the corresponding ideal. The \emph{multiplicity} of a solution $x\in\KK^2$ of the system $g=0$
    is the dimension of the ring obtained by localizing $\KK[z_{1}, z_{2}]$ at the maximal ideal
    $m := \langle z_{1} - x_{1}, z_{2}-x_{2} \rangle$
    and considering the quotient ring $\KK[z_{1}, z_{2}]_{m} / J \, \KK[z_{1}, z_{2}]_{m}$.
\end{definition}

For our purposes, to compute the multiplicity of the solution of a bivariate
polynomial system we proceed as follows. We augment the system with a linear form,
say $g_{0} = s - r_{1} z_{1} - r_{2} z_{2}$, where $s$ is a new variable and $r_{\{1,2\}}$ are suffiently generic integers. After we eliminate the variables $z_{1}, z_{2}$ from the system $g_{0} = g_{1} = g_{2} = 0$, we obtain a univariate polynomial in $s$. The multiplicity
of the roots of this last polynomial correspond to the multiplicities of the roots of the system.
For the further details of this approach we refer the reader to \cite{blmprs-bivsolve-16,BrSa-0dim-16,Rouillier-rur}.

For any point $\rho\in\KK$ and any $y\in \KK^2$, we denote by $\mu_\rho(y)$ the multiplicity of a solution $(\rho,0)$ to the system $\ous (f-y)=0$.

\begin{definition}[\AFKg, \textbf{\AFmult}]
  \label{def:a-face-mult}
  For any $\rho\in\KK^*$, define $\AFKgr$ to be the 
  set of all points $y\in\KK^2$ such that
  for any $\tilde{y} \not \in \AFKgr$, it holds
  \[
  \mu_\rho(y)> \mu_\rho(\tilde{y}).
  \]
  The \emph{\AFmult} of $f$ corresponding to $\Gamma$, \AFKg, is the union
  of all \AFKgr, where $\rho$ takes all distinct values in $\KK^*$ satisfying
  $\ous(f - \tilde{y})((\rho, 0))=0$ for some $\tilde{y}\in\KK^2$.

  For brevity, we also call it \amult set.
\end{definition}

\begin{example}\label{ex:alg-mult}
Consider the polynomials $(g_1,g_2)=(- y_1 + z_1^2 + z_2,~-y_2+z_1^2)$ from Example~\ref{ex:top-mult}. If $\KK = \CC$, then the equalities $\TFCg = \{(\rho^2,~\rho^2)\in\CC^2~|~\rho\in \CC\} = \{y_1 - y_2 = 0\}= \AFCg\subset\CC^2$ follow from the fact that $y_1 = y_2$ if and only if the system $\ous (f-y)=0$ has a solution of the form $(\rho,0)$ for some $\rho\in\CC^*$. Similarly, if $\KK = \RR$, then $\TFRg = \{(\rho^2,~\rho^2)\in\RR^2~|~\rho\in \RR\} = \{y_1 - y_2 = 0\}\cap \RR^2_{\geq 0}= \AFRg$. 
\end{example}

\begin{example}\label{ex:alg-mult2}
Consider the polynomials $(g_1,g_2)=(1 - z_1 - y_1z_2,~-1 + 2z_1 -3z_1^2 -y_2z_2)$ from Example~\ref{ex:top-mult2}. For any $\KK\in\{\RR,~\CC\}$, the point $(1,~0)$ is a simple solution to the system $\ous(f-y)=0$ whenever $y\not\in L$. Indeed, the line $L$ is the zero locus of the polynomial in $\KK[y_1,y_2]$ obtained as the determinant of the Jacobian matrix $\det \Jac_{(1,0)} \ous(f-y)$. Therefore, it holds $L=\AFKg$.
\end{example}

As the previous examples indicate, for any $\Gamma\prec\Delta$, it holds $\TFKg \subset \AFKg$. Indeed, since the system $\ous(f-y)=0$, from Def.~\ref{def:t-face-mult}, has a
  solution $\varrho\in\KK^*\times\{0\}$ whose multiplicity increases at points
  $y\in\TFKg$, we get the inclusion ``$\subseteq$''. We will see (Prp.~\ref{prop:tmult=amult-C}) that we have equality if $\KK = \CC$, and the inclusion may be strict for $\KK = \RR$ 
%  depending on the type of $\Gamma$ 
  (Rem.~\ref{rem:irred_real-semi} and Thm.~\ref{thm:special_case-odd}). Furthermore, notice (see e.g.,~Example~\ref{ex:alg-mult}) that, 
  %if $g:=\CC f$ is the complexification of $f:\RR^2\to\RR^2$, then, 
  in general, we have $\TFRg \neq \AFCg\cap\RR^2$.

In Section~\ref{sec:test-empty-mset}
we present an algorithm to indentify the components (if any) of \AFRg contributing to \TFRg
and hence to $\cJ_f$.

\subsection{The Jelonek set as a union of multiplicity sets}\label{subs:Jel-union}

We start by introducing some useful notation and auxiliary results. We use \SolKg to
denote the set of numbers $\rho \in \KK$ for which $(\rho,0)$ is a solution of
the system $\ous(f-y) = 0$, for some $y\in\KK^2$ and $U\in\Trg$, where $\Gamma$ is an edge of $\Delta$.
We denote by $\mathcal{D}_f$ the \emph{discriminant} of $f$. That is, if
$ C_f:=\left\lbrace x\in\KK^2~|~\det(\Jac_x(f))=0\right\rbrace$, then
$\mathcal{D}_f = f(C_f)$.

Since $f$ is a dominant map, the following holds.

\begin{lemma}\label{lem:discriminant-Axis}
  The set $\mathcal{D}_f\cup f \big(\KK^2\setminus \TT\big)$ is a finite union of curves.
\end{lemma}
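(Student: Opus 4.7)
The plan is to bound the dimension of each piece separately and observe that each is contained in a one-dimensional (semi-)algebraic set.

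First, I would argue that $C_f$ is at most one-dimensional. Because $f$ is dominant, the polynomial $J(x) := \det(\Jac_x(f)) \in \KK[x_1,x_2]$ is not identically zero: otherwise the image of $f$ would be contained in the critical value set of the induced morphism, contradicting dominance (concretely, if $J \equiv 0$ then by the constant rank theorem on a dense open subset, $f$ would factor through a curve). Hence $C_f = \VVK(J)$ is the zero set of a non-zero bivariate polynomial, which is a finite union of algebraic curves (possibly together with finitely many isolated points in the real case, which we may ignore since they form a finite set and hence lie on trivial curves).

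Next I would control the image $f(C_f)$. Decompose $C_f$ into its finitely many irreducible components $C_f = \bigcup_j D_j$. Each $D_j$ is an irreducible curve; its image $f(D_j)$ is either a single point or contained in an irreducible algebraic (in the real case, semi-algebraic) curve, since polynomial maps do not increase dimension (the image is constructible in the complex case, semi-algebraic in the real case, with $\dim f(D_j) \le \dim D_j = 1$). Taking the Zariski closure in the complex case, or the semi-algebraic closure in the real case, shows that $f(D_j)$ lies inside a one-dimensional (semi-)algebraic set. Since there are finitely many $D_j$, the set $\mathcal{D}_f$ is contained in a finite union of curves.

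For the second piece, observe $\KK^2 \setminus \TT = \VVK(x_1) \cup \VVK(x_2)$, which is the union of the two coordinate axes. Both are irreducible algebraic curves, so the same dimension-preservation argument applies: $f(\VVK(x_i))$ is at most one-dimensional and thus contained in a (semi-)algebraic curve, for $i = 1, 2$.

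Putting everything together, $\mathcal{D}_f \cup f(\KK^2 \setminus \TT)$ is a finite union of sets, each contained in a (semi-)algebraic curve; hence it is itself a finite union of curves. The only subtlety is the bookkeeping between the complex and real settings, but in both cases the fact that polynomial maps do not increase dimension of (semi-)algebraic sets is standard, so there is no serious obstacle; the main point to state cleanly is the non-vanishing of $J$ as a consequence of dominance.
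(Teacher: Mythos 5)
Your proof is correct: the paper itself gives no written argument for this lemma (it is asserted as an immediate consequence of dominance), and your dimension count --- dominance forces $\det(\Jac_x(f))\not\equiv 0$, so $C_f$ and the two coordinate axes are at most one-dimensional, and polynomial maps do not raise the dimension of constructible (resp.\ semi-algebraic) sets over $\CC$ (resp.\ $\RR$) --- is precisely the standard justification the paper leaves implicit. One cosmetic remark: some components may map to single points, so strictly speaking the set is \emph{contained in} a finite union of curves (equivalently, has dimension at most one), which is exactly the property used when the lemma is invoked to choose the segment $\lambda$ avoiding it in the proof of Theorem~\ref{thm:Jf=union-Tmult}.
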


We need the following technical lemma for the proof
Thm.~\ref{thm:Jf=union-Tmult}. It identifies the edges that we can safely
exclude from our computations.

\begin{lemma}
  \label{lem:short}
  Let $\Gamma \prec \Delta$ be an edge.
  If one of the summands of $\Gamma$ is a vertex that is not the origin,
  then for any
  $y\in\KK^2$, the system
  $(f-y)_\Gamma = 0$ has no solutions in $(\KK^*)^2$ .

\end{lemma}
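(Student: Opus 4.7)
The plan is to observe that when a summand $\Gamma_i$ collapses to a single vertex different from the origin, the face system on the $i$-th slot reduces to a single non-vanishing monomial, which cannot be zero on the torus.

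More concretely, I would proceed as follows. Without loss of generality assume the summand $\Gamma_2$ of $\Gamma$ is a vertex $\{b\}$ with $b = (b_1,b_2) \in \ZZ_{\geq 0}^2 \setminus \{(0,0)\}$ (the case where $\Gamma_1$ is the offending vertex is symmetric). By the definition of the face system in Section~\ref{ssubs:restr}, the polynomial $(f_2 - y_2)_{\Gamma_2}$ consists of those monomials of $f_2 - y_2$ whose exponent lies in $\Gamma_2 \cap \ZZ^2 = \{b\}$. Since the support of the constant term $-y_2$ is $\{(0,0)\}$ and $(0,0) \neq b$, the constant $-y_2$ contributes nothing to the restriction. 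Therefore
\[
(f_2 - y_2)_{\Gamma_2}(x) \;=\; (f_2)_{\Gamma_2}(x) \;=\; c_{2,b}\, x_1^{b_1} x_2^{b_2},
\]
where $c_{2,b} \neq 0$ because $b$ is a vertex of $\NP(f_2)$ and hence lies in $\supp(f_2)$.

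The second step is to note that this single monomial cannot vanish on the torus. Indeed, for $(x_1,x_2) \in (\KK^*)^2$ both factors $x_1^{b_1}$ and $x_2^{b_2}$ are nonzero, so $c_{2,b}\, x_1^{b_1} x_2^{b_2} \neq 0$. Since the system $(f-y)_\Gamma = 0$ in particular requires $(f_2-y_2)_{\Gamma_2}(x) = 0$, it has no solution in $(\KK^*)^2$, concluding the proof.

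There is no real obstacle here: the whole content is the bookkeeping remark that the translation $-y_2$ has support at the origin, and is therefore discarded by the face restriction the moment $\Gamma_2$ is a vertex away from the origin. The hypothesis ``vertex that is not the origin'' is used precisely to guarantee this disjointness of supports and to ensure $b \neq (0,0)$ so that $x^b$ is a nontrivial monomial on $(\KK^*)^2$.
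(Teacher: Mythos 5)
Your proof is correct and follows essentially the same route as the paper: restricting to the vertex summand away from the origin kills the translation term $-y_i$ and leaves a single monomial $c_b x^b$ with $c_b\neq 0$, which cannot vanish on $(\KK^*)^2$. The only difference is that you spell out explicitly why the constant $-y_i$ is discarded, which the paper leaves implicit.
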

\begin{proof}
  Let $\Gamma = \Gamma_1 \oplus \Gamma_2$. Assume that $\Gamma_1$ is the vertex that is not the origin.
  Then,
  $(f_1 - y_1)_{\Gamma} = c_w x^w$ and $c_w \not= 0$.
  Thus, its solution is $0$ and so the system $(f - y)_{\Gamma} = 0$
  has no solutions in the torus $\TT$.
\end{proof}

Now we are ready to state and prove our main result.

\begin{theorem}
  \label{thm:Jf=union-Tmult}

   Consider a dominant polynomial map  $f:\KK^2\rightarrow\KK^2$ and its Newton polytope $\Delta$,
  where $\KK \in \{\CC, \RR\}$.
  The Jelonek set of $f$ is 
  \begin{equation}
    \label{eq:union-mult}
    \cJ_f = \bigcup_{\Gamma}\TFKg,
  \end{equation}
  where $\Gamma$ runs over all infinity edges of $\Delta$.
\end{theorem}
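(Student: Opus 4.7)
The plan is to prove the two inclusions $\bigcup_\Gamma \TFKg \subseteq \cJ_f$ and $\cJ_f \subseteq \bigcup_\Gamma \TFKg$ separately, where $\Gamma$ ranges over the infinity edges of $A$.

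First I would handle the easy direction $\bigcup_\Gamma \TFKg \subseteq \cJ_f$. Fix an infinity edge $\Gamma$ and an associated toric transformation $U \in \Trg$. Given $y \in \TFKg$, Definition~\ref{def:t-face-mult} furnishes $\rho \in \KK^*$ and sequences $\varrho_k = (\rho_k^{(1)}, \rho_k^{(2)}) \in (\KK^*)^2$ and $y_k \in \KK^2$ with $\varrho_k \to (\rho, 0)$, $y_k \to y$, and $\ous(f - y_k)(\varrho_k) = 0$. I take $x_k \in \TT$ to be the inverse toric image of $\varrho_k$, so that $f(x_k) = y_k \to y$. The explicit formula for $U^{-1}$ expresses the coordinates of $x_k$ as monomials in $\varrho_k$ whose exponents in the vanishing factor $\rho_k^{(2)}$ are, up to the sign of $D$, the coordinates of the inner normal of $\Gamma$. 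Since $\Gamma$ is an infinity edge, this inner normal has a negative coordinate, which forces $\|x_k\| \to \infty$ as $\rho_k^{(2)} \to 0$, so $y \in \cJ_f$.

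For the converse direction $\cJ_f \subseteq \bigcup_\Gamma \TFKg$, take $y \in \cJ_f$ with a sequence $\{x_k\}$ witnessing non-properness. By Lemma~\ref{lem:discriminant-Axis} the set $\cD_f \cup f(\KK^2 \setminus \TT)$ is a finite union of curves; using the Euclidean closure built into Definition~\ref{def:t-face-mult} I may replace $y$ by a generic nearby point and assume $x_k \in \TT$ for large $k$. Pass to a subsequence so that $\log|x_k|/\|\log|x_k|\| \to \bar v$ for some unit vector $\bar v \in \RR^2$; since $\|x_k\| \to \infty$, the vector $\bar v$ has at least one strictly positive coordinate. The asymptotic behaviour of $f(x_k)$ is governed by the face of $A$ maximizing $\bar v \cdot (\cdot)$; for $f(x_k)$ to converge to the finite limit $y$, this face must be an edge $\Gamma$ with outer normal proportional to $\bar v$, and $\Gamma$ is automatically an infinity edge because its inner normal $-\bar v$ has a negative coordinate. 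Applying $U \in \Trg$ for this $\Gamma$ and setting $z_k = U(x_k)$, the identity $\log z_k = T \log x_k$ combined with the orthogonality $v \cdot \bar v = 0$ shows $\log|z_{1,k}|$ is bounded, while $w \cdot \bar v < 0$ forces $|z_{2,k}| \to 0$. After a further subsequence $z_{1,k} \to \rho \in \KK^*$, and continuity of $\ous(f-y)$ yields $\ous(f-y)(\rho, 0) = 0$; the sequences $\{z_k\},\{y_k\}$ then witness $y \in \TFKg$.

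The main obstacle is the second inclusion, specifically verifying that after the subsequence extractions the first coordinate $z_{1,k}$ actually converges to a nonzero $\rho \in \KK^*$ and not to $0$ or $\infty$. If it does not, the true asymptotic direction of $x_k$ is an infinitesimal perturbation of $\bar v$ that lies in the normal cone of an adjacent face of $A$, and one must re-select $\Gamma$ as a neighbouring edge. Handling this requires a careful inductive analysis on the combinatorial structure of the normal fan of $A$: among all faces that can serve as the asymptotic direction of some subsequence of $\{x_k\}$, I would pick the minimal one (in the fan order) for which the transformed sequence $z_{1,k}$ is forced to converge in $\KK^*$. Because $\|x_k\| \to \infty$, this face must retain the infinity property, so the eventual $\Gamma$ remains an infinity edge, and Lemma~\ref{lem:short} together with Cor.~\ref{cor:only-so-p-edges} guarantees that the face system $(f - y)_\Gamma$ indeed admits solutions in $\TT$, closing the argument.
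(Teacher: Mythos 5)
Your first inclusion $\bigcup_\Gamma\TFKg\subseteq\cJ_f$ is essentially correct (modulo the small point that $\TFKg$ is defined as a Euclidean closure, so you should pass to approximating points and use that $\cJ_f$ is closed); the computation that the $z_2$-exponents of $(x_1,x_2)$ under the inverse toric map form an inner normal of $\Gamma$, hence have a negative entry for an infinity edge, is exactly the right mechanism and matches the second half of the paper's argument. The genuine gap is in the hard inclusion $\cJ_f\subseteq\bigcup_\Gamma\TFKg$, and it is precisely the point you flag as "the main obstacle": knowing only that $\log|x_k|/\|\log|x_k|\|\to\bar v$ gives $\log|z_{1,k}|=v\cdot\log|x_k|=o(\|\log|x_k|\|)$, which does not prevent $|z_{1,k}|$ from drifting (subexponentially) to $0$ or $\infty$; for an arbitrary witnessing sequence there is simply no guarantee that \emph{any} edge's toric transform makes $z_{1,k}$ converge in $\KK^*$. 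Your proposed repair --- an induction over the normal fan, choosing "the minimal face for which $z_{1,k}$ is forced to converge in $\KK^*$" --- presupposes the existence of such a face, which is essentially the statement to be proved; neither Lemma~\ref{lem:short} nor Cor.~\ref{cor:only-so-p-edges} supplies it, since they say nothing about forcing convergence of a given sequence. The preliminary reduction is also shaky: "replace $y$ by a generic nearby point" so that $x_k\in\TT$ requires nearby points of $\cJ_f$ avoiding $\cD_f\cup f(\KK^2\setminus\TT)$, but a whole component of $\cJ_f$ can be contained in the closure of $f(\KK^2\setminus\TT)$, so this perturbation within $\cJ_f$ need not exist.

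The paper closes exactly this hole by trading the arbitrary sequence for an algebraic branch. It first restricts to the dense subset $\cJ_f^{\bullet}$ of points with finite fibers, then approaches $y\in\cJ_f^{\bullet}$ along a segment $\lambda(]0,1])$ chosen outside the discriminant, the other multiplicity sets, $\cJ_f^{\bullet}$ and $f(\KK^2\setminus\TT)$; the solutions of $f-\lambda(t)=0$ are then simple torus points that admit Puiseux expansions in $t$, and the curve selection lemma produces a branch $x(t)$ escaping to infinity. The leading exponent vector $\alpha$ of that branch simultaneously identifies the edge $\Gamma$ (with some $\alpha_i<0$, hence an infinity edge, using Lemma~\ref{lem:short} to exclude vertices), produces a torus solution of $(f-y)_\Gamma=0$ from the leading coefficients, and, after the toric change with $U^{-\Tr}\alpha=(0,1)$, yields $z(t)\to(\rho,0)$ with $\rho\neq0$ automatically --- which is the convergence statement your sequence argument cannot reach. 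To make your approach work you would need some substitute for this step (curve selection at infinity plus Puiseux series, or a toric compactification argument), not a combinatorial induction on the fan alone.
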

\begin{proof}
  Consider the subset $\cJ_f^\bullet \subseteq  \cJ_f$ containing points $y$ such that
  $f^{-1}(y)$ is finite%
  \footnote{For example consider the blowup $f : (x_1, x_2)\mapsto (x_1, x_1 x_2)$.
    Then,
    $\cJ_f^\bullet = \{(0,y_2)~|~y_2\in\KK^*\}$
    and  $f^{-1}(y) = \varnothing$, for any $y \in \cJ_f\setminus\{(0,0)\}$, and hence it is a finite set.}.
  Thus,
  \[ \cJ_f^\bullet = \{ y \in \cJ_f \,|\, \dim(f^{-1}(y)) \leq 0\} . \]
  On the other hand,
  \[ \cJ_f\setminus \cJ_f^\bullet  = \{ y \in \cJ_f \,|\, \dim(f^{-1}(y)) = 1\}, \]
  and so the preimage of any point in 
  $\cJ_f\setminus \cJ_f^\bullet$ 
  shares with $C_f$ an irreducible curve.
  In addition the  $\cJ_f\setminus \cJ_f^\bullet$ is finite, i.e., it has dimension 0,
  as there are finitely many points $y \in \KK^2$ for which the system
  $(f-y)(x) = 0$ is not zero dimensional.
  So $\cJ_f^\bullet$ is dense in $\cJ_f$.

  Let $T^{\rho}(\Gamma)$ be the subset of points
  $y\in  \TFKg$ as
  in Def.~\ref{def:t-face-mult} (without taking the Euclidean closure).
%  The $\KK$-uniruledness of
%  $\cJ_f$ and $\Mfg$ (see~\cite{Jel02} and Proposition~\ref{prop:char-mult}~\ref{it:2} respectively)
%  implies that
  It is enough to show that
  \begin{equation}
    \label{eq:union-mult:open}
    \cJ_f^\bullet = \bigcup_{\Gamma} T^{\rho}(\Gamma),
  \end{equation}
  where the faces $\Gamma$ are the infinity edges of $\Delta$. This is so, because both
  sets, on the left and on the right hand side of Eq.~\eqref{eq:union-mult:open}, are
  dense subsets of the corresponding closed (under the Euclidean topology) sets.

  To prove the inclusion $\cJ_f^\bullet \subseteq  \bigcup_{\Gamma} T^{\rho}(\Gamma)$,
  we borrow the arguments of the proof of~\cite[Thm. B]{Ber75}   
  that relates the branches of the curves at infinity to the faces of the corresponding Newton polytopes
  (cf. also~\cite[Lemma 1]{Gwozdziewicz2016}).
   Lem.~\ref{lem:discriminant-Axis}, the fact that the Jelonek set is
  $\KK$-uniruled~\cite{Jel93,Jel02} (see also the proof of
  Prop.~\ref{prop:correct-resultant}) imply that the discriminant, the \tmult
  sets

  and the image of the coordinate axes form a union of curves in $\KK^2$.

  Therefore, for any $y\in \cJ_f^\bullet$, one
  can choose a point $q\in\KK^2$, and a line segment $\lambda(]0,1])$ outside
  \[
    %\text{\eqref{eq:union-mult}}
     \bigcup_{\Gamma}\TFKg \cup\mathcal{D}_f \cup \cJ_f^\bullet\cup f\big(\KK^2\setminus\TT\big),
  \]
  where $\lambda:=(\lambda_1,\lambda_2):~[0,1]\to\KK^2$, $t\mapsto (1-t)y + tq$. 
  Then, we consider the parametrized polynomial system
\begin{equation}
  \label{eq:parametr}
  f - \lambda(t) = 0.
\end{equation}
Our assumptions on $\lambda$ imply that all the solutions $x(t)\in\KK^2$
of~\eqref{eq:parametr} are simple and contained in $\TT$.
Moreover, the corrdinates of every solution is a function in $t$.
In particular, the coordinates of the solutions have the following  Puiseux series expansion
\begin{equation}\label{eqref:Puis}
  x_i(t) = a_it^{\alpha_i}+~\text{higher order terms in $t$}, \text{ for } i \in \{1,2\},
\end{equation}
where $a_i\in\KK^*$ and $\alpha_i\in\QQ$. 
% These are solutions of the system~\eqref{eq:parametr}, where we extend the
% field of coefficients to the field of Puiseux series defined over $\KK$.
If we substitute $x(t)$ in  $f_i - \lambda_i(t)$
and set to zero the  coefficient of the smallest power of $t$, say $\omega^*$,
then
we obtain $(f_i - y_i)_{\Gamma}(a_1,a_2)=0$, for some edge $\Gamma\prec\NP(f)$.
Indeed, the value $\omega^*$ is  $\min(\langle\alpha,\omega\rangle~|~\omega\in \NP(f_i))$,
which is attained only for points $w$ in the face of $\NP(f_i)$ supported by the vector $\alpha=(\alpha_1,\alpha_2)$.
Therefore, the point $a=(a_1,a_2)\in\TT$ is a solution to $(f-y)_\Gamma=0$.

To prove that  $\Gamma$ is an infinity edge
we use the definition of the Jelonek set (Def.~\ref{def:Jel}).
As  $y\in \cJ_f^\bullet$,
one of the solutions $x(t)$ of~\eqref{eq:parametr}
converges to infinity as $t\to 0$ (we may take, e.g. a branch from $f^{-1}(\lambda(]0,1[))$). 
%there exists a sequence $\{x_k\}$ of isolated solutions of the system
%$f(x) = 0$ converging to infinity, while $f(x_k) \to y$.
%We can choose such a sequence so that  $f(x_k)\in \lambda([0,1])$, for all $k$.
%The curve
%selection Lemma implies that one of the solutions $x(t)$ %of~\eqref{eq:parametr}
%converges to infinity as $t\to 0$. 
Then, the value $\alpha_i$ appearing
in~\eqref{eqref:Puis} is negative for some $i$. Together with
Lem.~\ref{lem:short}, it implies that $\Gamma$ is an infinity edge.

To prove that $y\in T^{\rho}(\Gamma)$, we combine the previous
description with toric change of coordinates. The vector
$(\beta_1,\beta_2):=\big(U^{-1})^{\Tr}\cdot\alpha$ which supports the edge
$\Xi:=U\Gamma$ of the transformed polytope $U\NP(f)$ equals $(0,1)$
(Lem.~\ref{lem:vector-change}). The point $z(t)$, satisfying $z^U(t)=x(t)$, is
a solution to $ \ous(f -\lambda(t)) = 0$, and we have
\begin{equation}\label{eq:Puis2}
z_i(t)=b_it^{\beta_i}+~\text{higher order terms in $t$}, \text{ for } i \in  \{1,2\},
\end{equation} for some $b=(b_1,b_2)\in\TT$. Therefore, as $t\to 0$, $z(t)$ converges to a point $\varrho\in\KK^*\times\{0\}$.

%This proves the first inclusion.

%, that is a solution to $\ous (f-y)=0$. This shows that the multiplicity of $\varrho$ is higher for $\ous (f-y)=0$, than for $\ous (f-p)=0$.
%Since the number of isolated solutions (counted with multiplicity) is locally preserved, the multiplicity of $\varrho$ is higher for $\ous (f - y)=0$ than for $\ous f=0$.

To prove the  inclusion  $\cJ_f^\bullet \supseteq  \bigcup_{\Gamma} T^{\rho}(\Gamma)$,
let $\Gamma\prec \NP(f)$ be an infinity edge.
%semi-origin or pertinent. 
Assume that
$T^{\rho}(\Gamma)\neq\varnothing$ for some matrix $U\in\Trg$.
For any $y\in T^{\rho}(\Gamma)$, let $q$ be close enough to
$y$, which results a solution $p\in\TT$ to $\ous (f-q)=0$ as in
Def.~\ref{def:t-face-mult}. 
As $q$ converges to $y$, the second coordinate of $p$ converges to $0$, while
the first one remains close to a non-zero constant. Then, we can express $p$ by
a Puiseux series of the form~\eqref{eq:Puis2}, with $\beta =(0,1)$. The matrix $U\in\Trg$ transforms the point $p$ into a solution $p^U$ to the
system $f-q=0$, whose coordinates are expressed as in~\eqref{eqref:Puis}.
The assumptions on $\Gamma$ imply that one of the $\alpha_i$ appearing
in~\eqref{eqref:Puis} is negative. Then, the solution $p^U$ converges to
infinity as $p\to\varrho$, and $q\to y$. This proves that $y\in\cJ^\bullet_f$.
\end{proof}

\subsubsection{The \tmult sets in $\CC$}

In what follows, we describe the relation between \tmult and the \amult sets for different cases of $\KK$. 

\begin{prop}
  \label{prop:tmult=amult-C}
  Consider a dominant polynomial map  $f:\CC^2\rightarrow\CC^2$, its Newton polytope $\Delta$, and let $\rho \in \SolCg$. Then, for any edge $\Gamma\prec\Delta$, it holds
  \begin{equation}\label{eq:prop:tmult=amult-C}
    \TFCgr = \AFCgr .
  \end{equation}  
\end{prop}

{

\begin{proof}

 We first prove the statement for the case where $\Gamma$ is a semi-origin edge. Recall the surface $G\subset (\CC^*)^2\times\CC^2$, and $\overline{G}\subset \CC^4$ defined in Sec.~\ref{subs:main}. Then, Def.~\ref{def:t-face-mult} shows that $\TFCg = \pi(\overline{G}\cap \{z_2=0\})$. Furthermore, since $\Gamma$ is semi-origin, the value $y_i$ appears in the equation $\ous (f_i-y_i)_\Gamma(z_1,z_2) = \ous (f_i-y_i)(z_1,0)$ for some $i\in\{1,2\}$ (see Remark~\ref{rem:one-var}) . Hence, for any $\rho\in\CC^*$ and $y\in \CC^2$, it holds $\ous(f-y)(\rho,0)=0$ if and only if $(\rho,0,y_1,y_2)\in \overline{G}\cap \{z_2=0\}$. We deduce that $y\not\in \pi(\overline{G}\cap \{z_2=0\})$ $\Rightarrow$ $y\not \in  \AFCg$, which yields the equality $\TFCg =\AFCg$.

Note that, for a semi-origin edge $\Gamma$, the above arguments hold true after we replace $\CC$ by $\RR$. Thus, we also get $\TFRg =\AFRg$. The same cannot be said for pertinent edges.

Assume now that $\Gamma$ is pertinent, and let $T(\Gamma)$ be the subset of $\TFCgr$ defined as
  in Def.~\ref{def:t-face-mult} but without taking the Euclidean closure.

  The inclusion $T(\Gamma) \subseteq \AFCgr$ follows
  directly from the definition of the multiplicity sets. Since $\AFCgr$ is
  closed, the closure of $T(\Gamma)$ (i.e. the set
  $\TFCgr$) is included in $\AFCgr$.

  To prove the inclusion $\TF_{\CC}(\Gamma) \supseteq \AFCgr$
  we proceed as follows. Consider the set $A(\Gamma)$
  of points $y\in\AFCgr$ at which the system $\ous (f-y)=0$ has finitely-many
  solutions. One can check (see e.g. proof of Thm.~\ref{thm:Jf=union-Tmult}) that
  $\AFCgr\setminus A(\Gamma)$ is finite and hence
  $A(\Gamma)$ is dense in $\AFCgr$.

  Next, we
  choose a point $q\in A(\Gamma)$, and
    will show that it
  belongs to $\TF_{\CC}(\Gamma)$. Then, the point $(\rho,0)$ is
  an isolated solution to $\overline{U}(f-q)=0$. 

  Let $R\in \CC[y_1, y_2, z_1]$ be the generator in the elimination ideal
\begin{equation}\label{eq:elim}
    \langle\ous(f_1 - y_1),~\ous(f_2 - y_2)\rangle\cap \CC[y_1,y_2,z_1],
\end{equation}  and let $k$ be the largest integer allowing the factorization $R=(z_1-\rho)^kR_0$, with $R_0:=\phi_0(y) + \phi_1(y)(z_1-\rho) +\cdots + \phi_r(y)(z_1-\rho)^r$. 

Note that, we have $R_0(q,\rho)= 0$, and $R_0(\tilde{q},\rho)\neq 0$ for any $\tilde{q}\not\in \AFCgr$.  There exists $m>0$ such that the polynomial $R_0$ is expressed as
  \begin{equation}\label{eq:QR012}
	\phi_0\cdot R_{01} + (z_1-\rho)^mR_{02},  
  \end{equation} where $R_{01}$ collects all terms of $R_0$ that are factors of $\phi_0$, and $R_{02}$ is not a factor of $(z_1-\rho)$. Moreover, we have $R_{02}$ is not identically zero since $\forall y\in\CC^2$, $\exists z\in\CC^2$ such that $\ous(f-y)(z)\neq 0$. This shows that $\AFCgr$ is given by the curve $\VV(Q) $, where $\phi_0$ is a factor of $Q\in\CC[y]$.

Now, for any generic point $\tilde{q}\rightarrow q$, we get $Q(\tilde{q})\rightarrow 0$, and the solutions to $\ous(f - \tilde{q})=0$ are mapped to the roots $\Sigma_{\tilde{q}}\subset\CC$ of $R_0(\tilde{q},z_1)$ under the projection $\pi_1:\CC^2\to\CC$, $(z_1,z_2)\mapsto z_1$. The point $\tilde{q}$, together with each such solution $\sigma\in\Sigma_{\tilde{q}}$ satisfy
\begin{equation}\label{eq:Rational}
\phi_0(\tilde{q}) = -R_{02}(\tilde{q},\sigma)/R_{01}(\tilde{q},\sigma).
\end{equation} It follows from~\eqref{eq:QR012} that the rational function $\varphi_{\tilde{q}}:\CC\to\CC$, defined as $z_1\mapsto-\frac{R_{02}}{R_{01}}(\tilde{q},z_1)$ satisfies $\varphi_{\tilde{q}}(\rho)=0$.  Therefore, whenever $Q\rightarrow 0$, equation~\eqref{eq:Rational} has a solution $\sigma\rightarrow \rho$. We deduce that a point in the set
\[
\pi_1^{-1}(\sigma)\cap\VVCT(\ous(f-q)) 
\] converges to the vertical line $\pi^{-1}(\rho)$, containing $(\rho,0)$. 
To show that at least one of the points in $\Sigma_\sigma$ converges to $(\rho,0)$ it is
enough to note that the same arguments apply if we exchange $z_1$, $\rho$, and the line $\pi_1^{-1}(\rho)$ with $z_1$, $0$, and the line $\pi_2^{-1}(0)$. 

We have shown that $A(\Gamma)\subset\TFCgr$. This
yields the inclusion  "$\supset$", of~\eqref{eq:prop:tmult=amult-C} as
$A(\Gamma)$ is a dense subset of $\AFCgr$ and
$\TFCgr$ is closed.

\end{proof}

\begin{remark}\label{rem:irred_real-semi}
Using the same arguments as in the proof Prop.~\ref{prop:tmult=amult-C}, we can extract two further statements: 
\begin{enumerate}

	\item the set $\AFCg$ is a union of irreducible algebraic curves, and
	
	\item if $f$ is a real map instead, and $\Gamma$ is a semi-origin edge, then $\AFRgr = \TFRgr$. 

\end{enumerate}
\end{remark}

Let us further describe the multiplicity sets for the complex case.

\begin{lemma}\label{lem:weight}
Let $f$ and $\Delta$ be as in Prop.~\ref{prop:tmult=amult-C}, and let $S$ be an irreducible component of $\AFCg$ (see Rem.~\ref{rem:irred_real-semi}). Then, for any $p\not\in S$, and any $q\in S\setminus\Sigma$ for some finite subset $\Sigma\subset S$, the following two quantities are equal and independent of the choice of $p$ and $q$.
\begin{enumerate}
	\item The number of solutions in $(\CC^*)\times\{0\}$ of the system $\ous(f-q)=0$ if $\Gamma$ is semi-origin, and 

%	\item if $\Gamma$ is semi-origin, the system $\ous(f-q)=0$ has exactly $m$ solutions in $(\CC^*)\times\{0\}$, whereas $\ous(f-p)=0$ has none, and 
	
	\item The difference $\mu_\rho(p)-\mu_\rho(q)$ if $\Gamma$ is pertinent.

\end{enumerate}
We denote the value of theses two quantities by $m$, which we also call the \emph{weight} of the algebraic multiplicity.
\end{lemma}

\begin{proof}
Since the map is complex, the set $\overline{G}\cap \{z_2 = 0\}$ is a union of irreducible algebraic curves. Then, Prop.~\ref{prop:tmult=amult-C} shows that $S$ is the image of a component $\cC\subset\overline{G}\cap \{z_2 = 0\}$ under the projection $\pi:(x,y)\mapsto y$. 

Assume first that $\Gamma$ is semi-origin. Then, for some $i\in\{1,2\}$, the equation $\ous(f_i - y_i)(z_1,0)=0$ can be written as $\ous(f_i)(z_1,0) - y_i = 0$. Hence, the map $K:\cC\to \{z_2 = 0\}$, given by the restriction to $\cC$ of the other projection $(z,y)\mapsto z$, is a generically finite map, where $K^{-1}(q)$ is a set of solutions to $\ous(f-q)=0$ in $(\CC^*)\times\{0\}$. The value $m\in\NN$ will thus be exactly the topological degree of $K$.

Assume now that $\Gamma$ is pertinent. Then, the above map $K$ is not dominant, and $S = \AFCgr$ for some $\rho\in\CC^*$. Following the steps of the proof of Prop.~\ref{prop:tmult=amult-C} and using its notations we conclude that $\AFCgr$ is given by a polynomial $Q\in\CC[y_1,y_2]$ that divides $\phi_0$. Since the polynomial~\eqref{eq:QR012} is obtained from the ideal~\eqref{eq:elim}, the value $m$ appearing therein is exactly the difference of multiplicities $\mu_\rho(p)-\mu_\rho(q)$. Here, we set $\cV\subset\VV(Q)\setminus \VV(\phi_{i_1})\cup \cdots\cup \VV(\phi_{i_k})$, where $R_{02} = \phi_{i_1}z^{i_1} + \cdots + \phi_{i_k}z^{i_k}$.
\end{proof}

\subsubsection{The \tmult sets in $\RR$}

Let $f:\KK^2\to\KK^2$ be a dominant polynomial map, let $\Delta$ be its Newton polytope, let $\Gamma$ be an edge of $\Delta$, and let $U\in\Trg$. We use $g$ to denote the pair of polynomials $(g_1,g_2):=\ous(f-y)(z)$. 
 Recall that $G$ is given by $\{(z,y)~|~g=0\}\subset (\KK^*)^2\times\KK^2$, and $\overline{G}$ is given as the closure of $G$ in $\KK^4$. Let $P:=\pi_{|G}$, where $\pi:\KK^4\to\KK^2$, $(z,y)\mapsto y$.

\begin{lemma}\label{lem:jacobian-preserved}
It holds that
\[
\overline{P(\Crit P)} = \overline{\mathcal{D}_f\cap\TT} .
\]
\end{lemma}
\begin{proof}
First, note that $P(\Crit P)$ can be expressed as 
\[
P\left(\left\lbrace (z,y)\in G~|~\det \Jac_z g =0\right\rbrace\right)
\] From its construction, the map $g$ is written as
\[
(z,y)\longmapsto (z^{s_1}(f_1\circ \varphi - y_1),~z^{s_2}(f_2\circ \varphi - y_2)),
\] for some $s_1,s_2\in\NN^2$, where $\varphi:\TT_z\to\TT_x$ is the coordinate change of variables induced by  $U $. 

Let $(z,y)\in G$. Then, there exists $v\in\ZZ^2$ such that
\[
\Jac_z g = z^{v}\cdot\Jac_z(f_1\circ \varphi - y_1,~f_2\circ \varphi - y_2),
\] and since $z_1\cdot z_2\neq 0$, we get 
\[
\det \Jac_z g =0 \Leftrightarrow \det\Jac_z(f\circ\varphi).
\] The multivariate chain rule implies that 
\[
\Jac_z f\circ \varphi = (\Jac_{\varphi} f\circ \varphi)\cdot \Jac_z \varphi,
\] where $\det \Jac_z \varphi$ is non-zero for any $z\in \TT$. Therefore, we have $(z,y)\in G$ and $ \det\Jac_z(f\circ\varphi) =0$ if and only if there exists $x\in \TT$ such that $x=\varphi (z)$, $f(x)=y$ and $ \det\Jac_x(f) = 0$. This readily yields the proof.
\end{proof}

In what follows, we assume that $\KK=\RR$. Let $S\subset\RR^2$ be a semi-algebraic curve, such that there exists a polynomial surjective map $\phi:\RR\to S$. Then $S$ will be called a \emph{parametric semi-line}. Jelonek showed in~\cite{Jel02} that $\cJ_f$ can be decomposed into a finite union of parametric semi-lines. In the rest of this section, we use Theorem~\ref{thm:Jf=union-Tmult} to obtain several results on this decomposition.

We say that $p\in S$ is an \emph{endpoint} if for any small enough $\varepsilon >0$, the open ball $B_\varepsilon(p)\subset \RR^2$, intersected with $S$ is homeomorphic (in the Euclidean topology) to the half-open interval $[0,1[$. 
%Let $\CC f:\CC^2\to\CC^2$ denote the extended map, given by the complexification of $f$.

\begin{theorem}\label{thm:tmult=amult-R}
let $S$ be a parametric semi-line of $\cJ_f$, and assume that $S$ has an endpoint $q$. Then $q$ belongs to a component of the algebraic curve $\cD_{f}$ not containing $S$.
\end{theorem}

\begin{proof}
Let $\Gamma$ be an edge of $\Delta$. We keep the same notation as in this Section for $U$, $g$, and $G$ for $\KK=\RR$. We use $\CC G$ to denote the complex surface in $(\CC^*)^2\times\CC^2$ given by $\VV(g)$, and by $\overline{\CC G}$ its Zariski closure in $\CC^4$. 

Eventhough the closure $\overline{G}\subset \RR^4$ is smooth over all points in $G$, the same cannot be said about its boundary $\partial G:= \overline{G}\setminus G= \overline{G}\cap \{z_1\cdot z_2 = 0\}$. Then, using a sequence of blow-ups of $\CC^4$ at the complex algebraic curves $\overline{\CC G}\cap \{z_1\cdot z_2 = 0\}$, if necessary, we may assume (with abuse of notation) that $\overline{G}$ is smooth. The set $\overline{G}\cap\{z_2=0\}$ is now a union of real irreducible algebraic curves and of points. 

Then, we deduce from Thm.~\ref{thm:Jf=union-Tmult} that there exists an irreducible curve $\cC$ from $\overline{G}\cap \{z_2 = 0\}$, such that $S=\pi(\cC)$, where $\pi:(z,y)\mapsto y$. 

Recall that $q$ is an endpoint of $S$. Then, if $\pi_{|\cC}$ denotes the restriction of $\pi$ to the curve $\cC$, we conclude that there exists a point $p$ in its critical locus $\Crit(\pi_{|\cC})$ such that $\pi(p) = q$. In fact, since $\cC\subset \overline{G}$, the point $p$ is in the critical locus $\Crit(\pi_{|\overline{G}})$ of $\pi$ restricted to $\overline{G}$. 

Thanks to Lemma~\ref{lem:jacobian-preserved}, it is enough to show that $\pi(\Crit(\pi_{|\overline{G}}))$ contains a curve $D$ satisfying 
\[
q\in D\not\subset\pi(\overline{G}\cap\{z_2 = 0\}).
\] Since $\cC$ is an algebraic curve and $\pi(\cC)$ is a semi-algebraic curve with an endpoint $q$, it holds that $|\pi^{-1}(y)\cap \cC|\geq 2$ for finitely-many $y\in \pi(\cC)$. Let $\cU$ be a neighborhood of $p$, and let $K$ be a connected component of $G\cap \cU$, adjacent to $\cC$. Since $\pi_{|\overline{G}}$ is dominant, for all $y\in \pi(K)$, the preimage $\pi_{|K}^{-1}(y)$ also has at least two points. It follows that
\[
\emptyset \neq\pi( \Crit \pi_{|K}) \subset \pi(\Crit(\Pi_{|G})).
\] Here, the non-emptiness follows from $\pi_{|K}$ being a proper, locally algebraic morphism over a connected set $K$, whereas the inclusion is obvious.
\end{proof}

\begin{cor}\label{cor:segment-decomposition}
With the same notations as in Theorem~\ref{thm:tmult=amult-R}, let $\Sigma\subset \RR^2$ be the finite set of points formed by endpoints of $S$ and its singularities. Then $S$ is homeomorphic to a line if $\Sigma$ is empty. Otherwise, each connected component of $S\setminus\Sigma$ is a segment homeomorphic (in the induced Euclidean topology) to the open interval $]0,b[$, where $b\in\{1,+\infty\}$. 
\end{cor}

\begin{theorem}\label{thm:special_case-odd}
Let $g:=\CC f:\CC^2\to\CC^2$ be the complexification of $f$, and let $\AFCgr$ be a component of the multiplicity set $\AFCg$ of $\tilde{f}$ having odd weight (see Lemma~\ref{lem:weight}), for some $\rho\in\RR^*$ and some $\Gamma\prec\Delta$.  
%If $k=0$, or if $\AFCgr$ shares no components with $f(\{x_1\cdot x_2 = 0\})$, then 
Then, it holds that 
\[
\AFCgr\cap \RR^2\subset \cJ_f.
\]
\end{theorem}

\begin{proof}
Let $S$ denote the component $\AFCgr$. We retain the set $\Sigma\subset S$ from Lemma~\ref{lem:weight}, and define $\cV:=S\setminus \Sigma$. Then, the set $\RR\cV:=\RR^2\cap\cV $ is (homeomorphic to) a disjoint union of open intervals in $S\cap\RR^2$. Prop.~\ref{prop:tmult=amult-C} shows that for any $q\in \RR\cV$, there exists a point $y\in \RR^2\setminus S$, converging to $q$, and a set $K_y$ of points $z\in (\CC^*)^2$, converging to a subset $K_q\subset\CC^*\times\{0\}$, such that $\ous(f-y)(z) = 0$. Lemma~\ref{lem:weight} shows that, there exists $m\in\NN$ such that $\# K_q = m$ if $\Gamma$ is a semi-origin edge, and $K_q $ is a point of multiplicity $m$ otherwise. Therefore, we get $m=\# K_y$.

Recall that $\ous (f - y) =0 $ is a real polynomial system for each $y\in \RR^2$ above, and $K_q$ has at least one point in $\RR^*\times\{0\}$. Then, if $z$ is a point from $K_y$ converging to $K_q\in\RR^2$, so does its complex conjugate $\overline{z}$. Since $m=\#K_y$ is odd, at least one of the points in $K_y$ is real. We get $q\in\TFRgr$, and thus $q\in S_f$ from Thm.~\ref{thm:Jf=union-Tmult}.\end{proof}

%The following result could also be proven using topological arguments.

\begin{cor}\label{cor:birational}
Let $f:\RR^2\to\RR^2$ be a polynomial map such that its complexification $\tilde{f}:=\CC f:\CC^2\to\CC^2$ is birational. Then, the non-properness set $\cJ_f$ is a union of algebraic curves in $\RR^2$.
\end{cor}

\begin{proof}
Since the map is birational, for any edge $\Gamma$ of the Newton polytope $\Delta$ of $f$, each non-empty component $S$ of $\AFCg$ has odd weight.  Thm.~\ref{thm:special_case-odd} shows that $\cJ_f$ contains each component $S\cap\RR^2$, and Thm.~\ref{thm:Jf=union-Tmult} shows that $\cJ_f$ is formed by all such components $S\cap\RR^2$. The proof thus follows from Remark~\ref{rem:irred_real-semi}(1).
\end{proof}

}

\section{Strategy for computing the Jelonek set}\label{sec:main-alg}

In what follows $f = (f_1, f_2) :\KK^2\to\KK^2$, where
$f_1, f_2 \in \ZZ[x_1, x_2]$ are as in \eqref{eq:f1-f2}; we consider integer
coefficients to study the bit complexity of the corresponding algorithms. Also,
we consider $\Delta_1 = \NP(f_1)$, $\Delta_2 = \NP(f_2)$, and $\Delta = \Delta_1 \oplus \Delta_2$.

As a preproccessing step 
we
consider the pair $(f_1+a_1,f_2+a_2)$, where $a:=(a_1,a_2)\in\ZZ^2$ is a random
point. This is to ensure that they both have a constant term, or in other words
both their Newton polytopes contain the origin. Consequently, the output of
\sjalg will differ from the "true" Jelonek set of $f$ by
exactly this translation.

Accordingly, in what follows we assume that polynomials have a constant term.

The algorithm has two phases. During the first phase, which we detail in
Sec.~\ref{subsec:detect-faces}, we compute the Minkowski sum of the Newton
polytopes of the input polynomials and we characterize its edges, following
Def.~\ref{def:various-faces}. The partition of the edges with respect to the
various types appears in Fig.~\ref{fig:edge-partition}.
Following Lem.~\ref{lem:short} we have to consider only the
semi-origin and pertinent edges for our computations. At the second phase, which
we detail in Sec.~\ref{subsec:multipl-sets}, for each edge of interest, we
compute with a restricted polynomial system. For each, real or complex depending
on $\KK$, solution of this system, we exploit its multiplicity to compute the
corresponding \tmult set, using Prop.~\ref{prop:tmult=amult-C} or Cor.~\ref{cor:segment-decomposition}. The union of the \tmult sets is
the Jelonek set (Thm.~\ref{thm:Jf=union-Tmult}).

\begin{algorithm2e}[thp]
  \footnotesize
  %\scriptsize \dontprintsemicolon \linesnumbered
  \SetFuncSty{textsc} \SetKw{RET}{{\sc return}} \SetKw{OUT}{{\sc output \ }}
  \SetKwInOut{Input}{Input}
  \SetKwInOut{Output}{Output}
    \SetKwInOut{Require}{Require}
    % \SetVline
     \Input{$f_1, f_2 \in \ZZ[x_1, x_2]$, $\KK \in \{\CC, \RR \}$}
    \Require{Both $f_1$ and $f_2$ have a non-zero constant term. }
    \Output{The Jelonek set $\cJ$.}

    \BlankLine

    $\cJ \gets \varnothing$ \;
    $\Delta_1 \gets \NP(f_1) \, ; \, \Delta_2 \gets \NP(f_2) $ \;

    $\Delta \gets \textsc{Minkowski\_sum\_2}(\Delta_1, \Delta_2)$ \; \label{alg:sj-l-MS}
%     \tcc{Let the vertex set of $Q$ be $\{q_1,  \dots, q_{\ell}\}$.}

    \BlankLine
    \ForAll{semi-origin edges $\Gamma \prec \Delta$}{ \label{alg:sj-l-so}
      \tcc{It holds $\Gamma = \Gamma_1 \oplus \Gamma_2$}

      $f_1|_{\Gamma_1} \gets  x^{u} \cdot
      \sum_{i=0}^{|\Gamma_1\cap\NN^2|} a_i ( x_1^k x_2^l)^i $ ;
      $\quad f_2|_{\Gamma_2} \gets  x^{v} \cdot
      \sum_{j=0}^{|\Gamma_2\cap\NN^2|} b_j (x_1^kx_2^l )^j $ \;

      \lIf{$0 \not \in \Gamma_1$}{ \label{alg:sj-l-so-eq-1}
        $\cJ \gets \cJ \cup
        \left\{ (y_1, y_2) \in \KK^2 \,|\, y_2 = \sum_j b_j t^j,
          \text{ with } P_1(t) = \sum_i a_i t^i = 0 \right\}$
      }

      \lIf{$0 \not \in \Gamma_2$}{ \label{alg:sj-l-so-eq-2}
        $\cJ  \gets \cJ \cup
        \left\{ (y_1, y_2) \in \KK^2 \,|\, y_1 = \sum_i a_i t^i,
          \text{ with } P_2(t) = \sum_j b_j t^j = 0 \right\}$
      }

      \lIf{$0 \in \Gamma_1$ and $0 \in \Gamma_2$}{ \label{alg:sj-l-so-eq-3}
        $\cJ \gets \cJ \cup
        \left\{ (y_1, y_2) \in \KK^2 \,|\,
         y_1 = \sum_i a_i t^i,  y_2 = \sum_j b_j t^j,
        \text{ where } t \in \KK \right\}$
        }

    } % of FOR

    \BlankLine
    \ForAll{ pertinent edges $\Gamma \in \Delta$ such that
      $f|_{\Gamma} = 0$ has solutions in $\TT$ }{  \label{alg:sj-l-pert}

      \tcc{Let $a_{1}$ be the vertex of $\Gamma$ closer to the origin. Let $v$
        be a primitive vector on $\Gamma$ starting from $a_{1}$. We compute a
        basis for the lattice $\ZZ^{2}$, $(v, w)$ that spans positively the
        polytope $\Delta - a_1$.}

      $(v, w) \gets \textsc{Compute\_Lattice\_Basis}$ \; \label{alg:sj-l-basis}

      \tcc{It holds $T  =
        \big(\begin{smallmatrix} v_{1} & v_{2}\\ w_{1} & w_{2} \end{smallmatrix}\big)
        \in \SL(2,\ZZ)$ and $D = \det(T)$. \\
        We perform the change of variables
        $x_{1} \gets z_{1}^{w_{2}/D}z_{2}^{-v_{2}/D}, x_{2} \gets  z_{1}^{-w_{1}/D}z_{2}^{v_{1}/D}$.}

        $D \gets \det(T) = v_{1}w_{2} - v_{2}w_{1}$  \tcc{Notice that $D = \pm 1$, because $v, w$ is a unimodal basis.}

        $g_1 \gets \mathtt{numer}(f_1(z_{1}^{w_{2}/D}z_{2}^{-v_{2}/D}, z_{1}^{-w_{1}/D}z_{2}^{v_{1}/D}) - y_1)  \in \ZZ[y_1, z_1, z_2]$ \; \label{alg:sj-l-g1}
        $g_2 \gets \mathtt{numer}(f_2(z_{1}^{w_{2}/D}z_{2}^{-v_{2}/D}, z_{1}^{-w_{1}/D}z_{2}^{v_{1}/D}) - y_2)  \in \ZZ[y_2, z_1, z_2]$ \; \label{alg:sj-l-g2}

        \BlankLine
        $g \gets \gcd( g_1(y_1, z_1, 0), g_2(y_2, z_1, 0)) \in \ZZ[z_1]$ \; \label{alg:sj-l-g}

        \BlankLine
        $h \gets \det( \Jac_{z}(g_1, g_2)) \in \ZZ[y_1, y_2][ z_1, z_2]$ \; \label{alg:sj-l-Jac}

        \BlankLine

        $\cJ \gets \cJ \, \cup \, \textsc{cim\_by\_resultant}(g_{1}, g_{2}, g, \KK)$ \;\label{alg:sj-l-last}

    %     \BlankLine
    %     \ForEach{complex (or real depending on $\KK$) root $\rho$ of $g$}{  \label{alg:sj-l-rho}
          % \If{$h(\rho, 0) \not= 0$}{ \label{alg:sj-l-Jac-0}
          %    \tcc{If the Jacobian is not zero, then the multiplicity is one.}
          %    \tcc{It holds $h(\rho, 0) \in \ZZ[\rho][y_1, y_2]$.}
          %    $\cJ \gets \cJ \cup \{ h(\rho, 0) \} $ \;
          %  }
    %       \Else { \label{alg:sj-l-Jac-not-0}
    %         \tcc{The Jacobian is zero; the solution $(\rho, 0)$ has multiplicity $> 1$.}
    %         $\mathcal{J} \gets \mathcal{J} \cup
    %         \textsc{cim\_by\_resultant}(g_{1}, g_{2}, (\rho, 0))$ \;
    %         \tcc{Alternatively, we could use
    %           $\textsc{cim}(g_{1}, g_{2}, (\rho, 0))$.}
    %       }
    %     }
      }
      \RET $\cJ$\;

    \caption{\sjalg$(f = (f_1, f_2), \KK)$}
  \label{alg:sparse-Jelonek-2}
\end{algorithm2e}

\subsection{Characterizing the edges}\label{subsec:detect-faces}

The first phase of \sjalg characterizes the edges (and the corresponding
summands), see Sec.~\ref{subs:long-short}, of the Minkowski sum
$ \Delta = \Delta_1 \oplus \Delta_2$.
In particular, it identifies the infinity edges
$\Gamma \prec \Delta$ which are either semi-origin or
pertinent. The characterization relies on the summands of $\Gamma$, say
$(\Gamma_1, \Gamma_2)$. 
The algorithm for computing the Minkowski sum should check whether $\dim(\Gamma_i)$
is zero or not and if $\Gamma_i$ contains $\{0\}$ or not, for $i \in \{1,2\}$,
see Def.~\ref{def:various-faces}.

\subsection{Computing the multiplicity sets}\label{subsec:multipl-sets}

We consider only the semi-origin and the
pertinent edges (Lem.~\ref{lem:short}). For each one them, say
$\Gamma$ with summands $(\Gamma_1, \Gamma_2)$, the algorithm computes the \amult
set, \AFKg, which is a superset of the \tmult set \TFKg. The \amult set consists
of the set of points $y\in\KK^2$ for which there exists a particular
transformation $U\in \SL(2,\ZZ)$ (see Section~\ref{subsec:base-change}) such
that $\ous (f -y) = 0$ has more isolated solutions (counted with multiplicities)
in $\KK^*\times\{0\}$ than $\ous f = 0$ has in $\KK^*\times\{0\}$ (see
Defs.~\ref{def:t-face-mult} and~\ref{def:a-face-mult}).

First, we consider the  semi-origin edges,
Line~\ref{alg:sj-l-so} in \sjalg (Alg.~\ref{alg:sparse-Jelonek-2}). For each such $\Gamma$,
we deduce from Remark~\ref{rem:one-var} that after we restrict $f -y$ to $\Gamma$,
we obtain univariate polynomials in  a monomial  $x_1^k x_2^l$ for some $k, l \in \NN$;
which in turn we consider it as new variable $t$. 

Thus,  $(f_1 - y_1)_{\Gamma} \in \ZZ[y_1, t]$ and
$(f_2 - y_2)_{\Gamma} \in \ZZ[y_2, t]$,
which is a parametric representation of $\TFKg$;
see Lines~\ref{alg:sj-l-so-eq-1},
\ref{alg:sj-l-so-eq-2}, and \ref{alg:sj-l-so-eq-3} of
\sjalg.
We can obtain the implicit equation $\TFKg$ by eliminating the common variable
$t$ from the system. The multiplicity set is a point, a parametrized curve, or
a union of lines.

Next, we consider the pertinent edges, Line~\ref{alg:sj-l-pert} of
\sjalg. Let $\Gamma\prec \Delta$
be pertinent.
First, we transform the polynomials $f - y$, using a toric change of variables,
to $g = (g_1, g_2) \in \QQ[y_1, y_2][z_1, z_2]$, Lines~\ref{alg:sj-l-g1} and
\ref{alg:sj-l-g2}, i.e., $g := \ous(f-y)$.
 The change of variables corresponds to a 
 $\Gamma$-basis for $A$ (see Sec.~\ref{subsec:base-change}.)
  of the lattice
$\ZZ^2$, Line~\ref{alg:sj-l-basis}.

Following Remarks~\ref{rem:Ustar-0dim},~\ref{rem:one-var}, and Lem.~\ref{lem:short},
the change of basis preserves the number of the 0-dimensional solutions
and results in a simpler system to solve; actually a univariate one, Line~\ref{alg:sj-l-g}.

We consider $\Gamma$ only if $\ous f_\Gamma =0$ has solutions in
$\KK^*\times\{0\}$; 
(see Remark~\ref{rem:one-var} and
Lem.~\ref{lem:short}). This excludes edges $\Gamma$ that are not
semi-origin or not pertinent; see Lem.~\ref{lem:short}. Therefore, $g=0$ 
 has solutions $(\rho, 0) \in \KK^*\times\{0\}$ whose multiplicity
changes depending on whether $y$ belongs to the \amult set or not
(Def.~\ref{def:t-face-mult}). For each above solution $\rho$ we compute a bivariate
polynomial $J_{\rho} \in \ZZ[\rho][y_1, y_2]$ the zero set of which is a curve
$\cK_{\rho} := \VV_{\KK}(J_{\rho}) = \AFKgr \subset \KK^2$, which the \amult
set.

If $\KK = \CC$, then $\TFCg = \AFCg$ and the \tmult set  is the union of all
$\AFCgr$; Prop.~\ref{prop:tmult=amult-C}.
If $\KK = \RR$, then we have to test whether certain distinguished

points, say $\{p_1,\ldots,p_r\}$,
in $\AFRgr$ (if any) are in the Jelonek set or not. That is, according to Cor.~\ref{cor:segment-decomposition}, we subdivide $\AFCgr\cap\RR^2$ into $r+1$ open intervals separated by points in $\cD_f\cap\AFCgr$, $\Sin(\AFCgr)$, and so to avoid the discriminant and all other multiplicity sets.

If $p_i$ belongs to the Jelonek
set, then $\TFRg$ contains the interval $\ell_i\subset \AFCgr\cap\RR^2$. The union of all such intervals forms $\TFRg$. If $p_i$ is not in the Jelonek set, then we ignore this line segment.

We present Alg. \textsc{ms\_resultant} in Sec.~\ref{sec:G-mult-resultant} for computing the multiplicity of a solution of a
bivariate system and the corresponding \tmult sets.

\subsection{Complexity and representation of the output}
\label{subsec:complexity}

Let $f_1, f_2 \in \ZZ[x_1, x_2]$ be polynomials of degree $d$ and bitsize
$\tau$. Also, assume that their Newton polytopes have at most $n$ edges. We go
over the various steps of \sjalg to estimate their complexity.

\medskip
\noindent
\emph{Initial computations.}
First, we compute the Minkowski sum, Line~\ref{alg:sj-l-MS}. This costs
$\sOO(n)$ and results a polygon with at most $\OO(n)$ edges at its convex hull.
We do this by slighly modifying the  well known
optimal algorithm \cite[Sec.~13.3]{BCKO-CG-08} for computing the Minkowski sum
of two polygons to remember the summands of its edge of the sum.

\medskip
\noindent
\emph{Computations with semi-origin edges.}
Next, we consider the semi-origin edges of the Minkowski
sum, Line~\ref{alg:sj-l-so} to \ref{alg:sj-l-so-eq-3}; that is edges having at
least one summand containing $0$.
The most computational expensive operation is the computation of the roots (real
or complex) of  univariate polynomials; these are the polynomials  $P_1(t)$ and
$P_2(t)$ appearing at Lines~\ref{alg:sj-l-so-eq-1} and \ref{alg:sj-l-so-eq-2}.
They come from the restriction of $f_1$ or $f_2$ on $\Gamma$ and so
their degree is at most $d$ and bitsize at most $\tau$. The computation of
their roots (real or complex) costs $\sOB(d^3 + d^2\tau)$ \cite{Pan-usolve-02}.
As there are at most $\OO(n)$ semi-origin edges, the total cost of the first
phase is $\sOB(n(d^3 + d^2\tau))$.

Regarding the output of this phase, first we assume  that  $\KK = \RR$.
If the real roots of $P_1 \in \ZZ[t]$ are
$\gamma_1, \dots, \gamma_r$, then whenever $0 \not\in \Gamma_2$, the output is a union of horizontal lines
defined by numbers in $\ZZ[\gamma_i]$, for $i \in [r]$.
Similarly for the case $0 \not\in \Gamma_1$.
When both $\Gamma_1$ and $\Gamma_2$ contain 0 (Line~\ref{alg:sj-l-so-eq-3}),
then $\cJ_f$ is a parametrized polynomial curve, defined by polynomials
in $\ZZ[t]$ of degree at most $d$ and bitsize at most $\tau$. For $\KK=\CC$, its implicit
representation, consists of a polynomial in $\ZZ[y_1, y_2]$ of
degree at most $d$ and maximum coefficient bitsize $\sOO(d \tau)$. We compute it
in $\sOB(d^3\tau)$, e.g., \cite{LicRoy-sub-res-01}.
If $\KK = \CC$
and $0 \in \Gamma_1$ (or $0 \in \Gamma_2$), then
we represent the union of lines in a more unified way
by considering the resultant
$R_2(y) =  \res( y_2 - \sum_j b_j t^j, P_1(t), t) \in \ZZ[y_2]$.
In this way we have the implicit representation  $(y_1, R_2(y))$ for $\cJ_f$ whenever $\KK=\CC$ and a parametrized one when $\KK = \RR$.

\medskip
\noindent
\emph{Computations with pertinent edges.}
The last phase of the algorithm deals with pertinent edges.
The first
task consists in computing a unimodular basis that fits our needs
Line~\ref{alg:sj-l-basis}
and Section~\ref{subsec:base-change}.
The most costly part of this procedure is the computation of the Smith Normal
Form (SNF) of matrix. As the degree of the input polynomials is $\OO(d)$, this
also bounds the dimension of the matrices. The cost of SNF is
$\sOB(d^{\omega +1})$ \cite{Stor-SNF-96}, where $\omega$ is the exponent of
matrix multiplication.

After the toric change of variables
we obtain a univariate polynomials and we compute its roots.
Then, we compute the  multiplicity sets
by exploiting the multiplicities of the roots of the corresponding bivariate polynomial system.
\msresultant (Alg.~\ref{alg:cim-resultant}) computes the corresponding
multiplicity sets in $\sOB(d^7 + d^6\tau)$ for the complex case
and  $\sOB(d^{19.11} + d^{18.11}\tau)$ in the real case.
As there at most $n$ pertinent edges, we should multiply the previous bounds by $n$
to get the overall cost.
The latter bounds dominate the complexity of the algorithm.

\begin{theorem}\label{th:sparse-complexity}
  Let $f = (f_1, f_2) : \KK^2 \to \KK^2$ be a dominant polynomial,
  where $f_1, f_2 \in \ZZ[x_1, x_2]$ are polynomials of size  $(d, \tau)$
  and their Newton polytopes have at most $n$ edges.
  \sjalg computes the Jelonek set
  of $f$ in
  $\sOB(n(d^7 + d^6\tau))$ over the complex
  and  in $\sOB(n(d^{19.11} + d^{18.11}\tau))$ over the real numbers.
\end{theorem}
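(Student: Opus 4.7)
The plan is to add up the three phases of \sjalg as outlined in Sec.~\ref{subsec:complexity}, show that the pertinent-edges phase dominates, and multiply by the number of edges.

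First, I would bound the initial phase: the Minkowski sum via \msalg (a variant of the standard optimal algorithm, Alg.~\ref{alg:Mink-sum}) costs $\sOO(n)$ and outputs an edge-classified polygon with $\OO(n)$ edges. This is negligible in the final bound.

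Next, I would handle the semi-origin edges (Lines~\ref{alg:sj-l-so}--\ref{alg:sj-l-so-eq-3}). For each such $\Gamma$, Remark~\ref{rem:one-var} reduces the restricted system $(f-y)_{\Gamma}=0$ to two univariate polynomials $P_1,P_2\in\ZZ[t]$ of degree $\le d$ and bitsize $\le\tau$. Isolating their roots costs $\sOB(d^3+d^2\tau)$ \cite{Pan-usolve-02}, and producing the implicit equation in the origin case (via a subresultant chain) costs $\sOB(d^3\tau)$. Since there are $\OO(n)$ semi-origin edges, the total cost of this phase is $\sOB(n(d^3+d^2\tau))$, which is dominated by the final bound.

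The main work, and the dominant contribution, is the pertinent-edges phase (Line~\ref{alg:sj-l-pert} onward). For each pertinent edge $\Gamma$ I would: (i) compute a unimodular basis $(v,w)\in\Trg$ spanning positively $A\oplus\{-a_1\}$ (Sec.~\ref{subsec:base-change}); the bottleneck here is an SNF computation on an $\OO(d)\times\OO(d)$ integer matrix, at cost $\sOB(d^{\omega+1})$ \cite{Stor-SNF-96}; (ii) perform the toric change of variables to obtain $g_1,g_2\in\ZZ[y_1,y_2][z_1,z_2]$ of bounded size (the monomial substitution does not increase bitsize asymptotically); (iii) invoke \msresultant (Alg.~\ref{alg:cim-resultant}) on $(g_1,g_2)$ to compute the \amult (and hence the \tmult) set. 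By Thm.~\ref{thm:cim-res-complexity-R} and its complex analogue, the cost of a single \msresultant call is $\sOB(d^7+d^6\tau)$ when $\KK=\CC$ and $\sOB(d^{19.11}+d^{18.11}\tau)$ when $\KK=\RR$. Since there are at most $n$ pertinent edges, multiplying gives $\sOB(n(d^7+d^6\tau))$ respectively $\sOB(n(d^{19.11}+d^{18.11}\tau))$.

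Finally, I would note that the bounds from the semi-origin phase and the SNF step are all absorbed by the \msresultant bound (for any reasonable $\omega\le 3$), so the \msresultant phase dictates the overall complexity. The correctness of this summation relies only on Thm.~\ref{thm:Jf=union-Tmult} (which guarantees that the Jelonek set is exactly the union of the \tmult sets over infinity edges) and on Cor.~\ref{cor:only-so-p-edges} (which restricts the enumeration to semi-origin and pertinent edges), both already established. The main obstacle in writing this out is purely bookkeeping: making sure the bitsize growth through the toric substitution and through the subresultant/SNF calls is tracked tightly enough to leave the per-edge \msresultant cost as the dominant term.
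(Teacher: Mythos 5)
Your proposal follows essentially the same route as the paper's own argument: the same three-phase accounting (Minkowski sum, semi-origin edges via univariate root isolation at $\sOB(d^3+d^2\tau)$ per edge, pertinent edges via the lattice-basis/SNF step and \msresultant), with the per-edge \msresultant bounds of Thms.~\ref{thm:cim-res-complexity-C} and~\ref{thm:cim-res-complexity-R} dominating and being multiplied by the $\OO(n)$ edges. This matches the paper's proof in Sec.~\ref{subsec:complexity}, so nothing further is needed.
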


If the polynomials are generic, then the complexity bound becomes significantly
better.

\begin{cor}\label{cor:sparse-complexity}
  Let $f = (f_1, f_2) : \KK^2 \to \KK^2$ be a dominant polynomial,
  where $f_1, f_2 \in \ZZ[x_1, x_2]$ are polynomials of size  $(d, \tau)$
  and their Newton polytopes have at most $n$ edges.
  If $f_1$ and $f_2$ are generic, then
  \sjalg computes the Jelonek set of $f$ in
  $\sOB(n(d^3 + d^2\tau))$.
\end{cor}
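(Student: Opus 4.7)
The plan is to exploit Remark~\ref{rem:pract-opt}: when the number $N$ of isolated solutions of $f = 0$ in $(\KK^*)^2$, counted with multiplicities, equals the mixed volume $\mathtt{MV}(A)$, every pertinent edge $\Gamma \prec A$ yields $(f-y)_\Gamma = 0$ with no solution in $\TT$, so the associated \tmult set is empty and the pertinent-edge loop (Lines~\ref{alg:sj-l-pert}--\ref{alg:sj-l-last} of \sjalg) contributes nothing to $\cJ$. Since the pertinent-edge work is what produces the expensive terms $\sOB(n(d^7+d^6\tau))$ and $\sOB(n(d^{19.11}+d^{18.11}\tau))$ in Theorem~\ref{th:sparse-complexity}, removing it leaves only the Minkowski-sum phase and the semi-origin phase.

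Concretely, I would first invoke the genericity hypothesis: Bernstein's theorem guarantees that for $f_1, f_2$ whose coefficients lie in a Zariski-open subset of the coefficient space fixed by $\supp(f_1)$ and $\supp(f_2)$, one has $N = \mathtt{MV}(A_1, A_2) = \mathtt{MV}(A)$. Hence, generic $f$ trigger exactly the branch of the algorithm described in Remark~\ref{rem:pract-opt}. In our implementation this branch is detected by a single call to \textsc{test\_number\_of\_roots}, which compares $N$ to $\mathtt{MV}(A)$; its cost is dominated by one bivariate resultant/root-count on polynomials of size $(d,\tau)$ and therefore fits comfortably in $\sOB(d^3+d^2\tau)$.

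What remains to be bounded is the non-pertinent portion of the algorithm, whose complexity was already analysed in the proof of Theorem~\ref{th:sparse-complexity}. The Minkowski sum $A = A_1 \oplus A_2$ and the characterisation of its edges (Alg.~\ref{alg:Mink-sum}, Line~\ref{alg:sj-l-MS}) cost $\sOO(n)$. For each of the $O(n)$ semi-origin edges the dominant operation is root isolation of a univariate polynomial of degree at most $d$ and bitsize at most $\tau$ (the $P_1,P_2$ at Lines~\ref{alg:sj-l-so-eq-1}--\ref{alg:sj-l-so-eq-2}), which costs $\sOB(d^3+d^2\tau)$ by standard univariate solvers. Implicitising the parametric curve arising in the origin-edge case (Line~\ref{alg:sj-l-so-eq-3}) via a univariate resultant is $\sOB(d^3\tau)$, still within the same bound. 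Summing over edges gives $\sOB(n(d^3+d^2\tau))$, which absorbs $\sOO(n)$ and the Bernstein-genericity test.

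The only genuine point to verify is step one, namely that the Bernstein-genericity condition $N=\mathtt{MV}(A)$ is indeed generic in the sense of a Zariski-open (and hence full-measure) subset of coefficients; this is classical and follows from the fact that the Newton polytope / mixed-volume count is sharp precisely when no face system $f_\Gamma=0$ has a torus solution, which is a finite union of coefficient-polynomial equations. Everything past that step is bookkeeping on per-phase estimates already established, which is why I view this as the principal (but minor) obstacle.
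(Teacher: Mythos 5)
Your proposal follows essentially the same route as the paper's proof: genericity via Bernstein's theorem guarantees that no face system attached to a pertinent edge has a solution in $\TT$, so the expensive pertinent-edge phase of \sjalg contributes nothing, and the remaining Minkowski-sum and semi-origin-edge work (dominated by $n$ univariate root isolations) gives $\sOB(n(d^3+d^2\tau))$. The only caveat is your cost claim for \textsc{test\_number\_of\_roots}: the bivariate resultant it relies on costs roughly $\sOB(d^4\tau)$, not $\sOB(d^3+d^2\tau)$, but this step is merely the optional optimization of Remark~\ref{rem:pract-opt} — the algorithm's own per-edge test that $f|_{\Gamma}=0$ has a torus solution reduces to a univariate gcd computation of size $(d,\tau)$ per edge, which stays inside the stated bound, so the conclusion is unaffected.
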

\begin{proof}
  When the input polynomials are generic, then
  the Minkowski sum of their Newton polytopes does  contain a pertinent edge
  with probability 1.
  Therefore, the complexity of the algorithm depends on the computation of the
  Minkowski sum and the manipulation of the semi-origin edges. The latter
  dominates the complexity bound. Its complexity is $\sOB(n(d^3 + d^2\tau))$, as
  we have to solve $n$ times a univariate polynomial.
\end{proof}

\begin{remark}
  The genericity property of the polynomials in Cor.~\ref{cor:sparse-complexity} is
  with respect to fixed Newton polytopes and with respect to non-fixed ones.
\end{remark}

\subsection{An example}\label{subsec:example}

Let $\Delta:=\Delta_1\oplus \Delta_2$, where $\Delta_1$, and $\Delta_2$ are integer polytopes in
$(\RR_{\geq 0})^2$; they appear at the left hand side of Figure~\ref{fig:pol}. The right hand side 
illustrates $\Delta$.

We want to compute the Jelonek set $\mathcal{J}_f$ of the map $f=(f_1,f_2):\KK^2\to\KK^2$,
where
\begin{align*}
f_1= & 1+x_1x_2 +2x_1^2x_2^2 - \tfrac{7}{10}x_1^2x_2 - 3x_1^3x_2^2,\\
  f_2= & 1+3x_1x_2 -4x_1^2x_2^2 +5x_1^3x_2^3  -6x_1^4x_2^4 + \tfrac{3^7}{2^5}x_1^{10}x_2^4  -54x_1^{6}x_2^3
         + \tfrac{5103}{320}x_1^{9}x_2^3 - x_1^7x_2^2 + x_1^4x_2.
\end{align*}
Figure~\ref{fig:pol} shows that $\NP(f) = \Delta$ has exactly six infinity edges;
they are either pertinent or semi-origin (see
Def.~\ref{def:various-faces}). Let
$S_{01},S_{02}, S_{13}, S_{24}, S_{35}, S_{46}\subset\KK^2$, denote the
corresponding multiplicity sets in $\KK^2$ (see
Def.~\ref{def:t-face-mult}) where
\[
  S_{ij}:=\TFK(a_i\oplus b_j).
\]
By Thm.~\ref{thm:Jf=union-Tmult}, the union of the latter is $\cJ_f$.
We apply \sjalg to compute $\cJ_f$.
\begin{itemize}
  \item Edges $a_0\oplus b_1$, $a_0\oplus b_2$, and $a_1\oplus b_3$ are infinity
    semi-origin an only $a_0$ contains $(0,0)$. Therefore,
    Lines~\ref{alg:sj-l-so-eq-2} and~\ref{alg:sj-l-so-eq-3} of
    \sjalg results
	\begin{align*}
		S_{01}  = & \{(y_1, y_2)\in\KK^2~|~y_1=1,~1 +t =0\}  =  \{y_1 = 1\},  \\
		S_{02}  = & \{(y_1, y_2)\in\KK^2~|~y_1=1,~1 -t =0\}  =  \{y_1 = 1\},  \\
		S_{13}  =  & \{(y_1, y_2)\in\KK^2~|~y_1=1-7t/10,~320 + 5163t =0\}  = \{y_1 - 761/729 = 0\}.
	\end{align*}

	\item The edge $a_4\oplus b_6$ is origin, where $(0,0)\in a_4$ and $(0,0)\in b_6$. Then, Line~\ref{alg:sj-l-so-eq-3} results
	\begin{align*}
		S_{46} = & \{(y_1, y_2)\in\KK^2~|~y_1=1+t+2t^2,~y_2=1 +3t-4t^2+5t^3-6t^4\}.
	\end{align*}	%10 - alg:sj-l-basis, 15 - alg:sj-l-Jac, 17 - alg:sj-l-Jac-0, 19 - alg:sj-l-Jac-not-0
  \item Edges $a_2\oplus b_4$, and $a_3\oplus b_5$ are pertinent. Moreover, the
    systems $f|_{a_2\oplus b_4 }=0$ and $f|_{a_3\oplus b_5 }=0$ have solutions in
    $\TT$. Then, we apply  Lines~\ref{alg:sj-l-basis} --~\ref{alg:sj-l-last}
    to $f$ for both edges. The algorithm outputs
	\begin{align*}
      \AFK(a_2\oplus b_4) =  \{y_1+1468/18225 =0\}
      \text{ and }
      \AFK(a_3\oplus b_5)  =  \{y_1+6238/10935 =0\},
	\end{align*}
    and the reader can check that $\TFK(a_2\oplus b_4)$
    and $\TFK(a_3\oplus b_5)$ are non-empty when $\KK=\RR$.
  \end{itemize}

\section{Computing the multiplicity sets}
\label{sec:mult-set-computation}

Let $f:\KK^2\to\KK^2$ be a dominant polynomial map sending $(0,0)$ to $\TT$
and
$\Delta:=\NP(f)$. In this
section we present how to compute the \tmult set,  \TFKg,
corresponding
to a pertinent edge $\Gamma\prec \Delta$.

Following the process of  \sjalg,
for a pertinent edge $\Gamma$, after we perform a toric change of variables,
we obtain a bivariate polynomial
system that has solutions of the form $(\rho, 0)$, for some complex (or real)
$\rho$, Line~\ref{alg:sj-l-g}. Each $\rho$ gives rise to the set
\AFKgr, where 
\[
  \TFKg \subseteq \AFKg = \bigcup_{\rho\in\SolKg}\AFKgr.
\]

The computation of $\AFKgr$ depends on the multiplicity of $(\rho, 0)$. We
present an algorithm \msresultant, for computing the multiplicity of $(\rho, 0)$ as a root
of a bivariate polynomial system and then use it to compute the multiplicity set
$\AFKgr$. The algorithm exploits resultant computations and its
worst case complexity is polynomial.

\subsection{Resultant computations for the multiplicity set}
\label{sec:G-mult-resultant}

The pseudo-code of the algorithm appears in Alg.~\ref{alg:cim-resultant} and its
proof of correctness presents the details of the various steps.

\begin{prop}\label{prop:correct-resultant}
  \msresultant, Alg.~\ref{alg:cim-resultant}, correctly computes the \amult set
  \AFKg that corresponds to a \emph{pertinent} edge $\Gamma\prec \Delta$. The result
  is a (possibly empty) union $ \bigcup_{\rho} \cK_{\rho}$ of finitely-many
  curves $\cK_{\rho}$ that are the zero loci of polynomials in $\KK[y_1,y_2]$,
  where $\KK \in \{\RR, \CC\}$ and $\rho$ runs over all
  points in $\KK^*$.
\end{prop}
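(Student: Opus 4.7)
The plan is to reduce the computation of $\AFKgr$ to the vanishing of a suitable derivative of a univariate resultant, exploiting the classical property that the multiplicity of a root of the resultant accumulates the multiplicities of solutions of the system in the corresponding fiber. The analysis proceeds in three steps: identifying $\SolKg$, expressing $\AFKgr$ via the resultant, and packaging the output as the required union of curves.

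First, I would analyze the gcd produced at Line~\ref{alg:sj-l-g}. Because $\Gamma$ is pertinent, neither summand contains the origin, and by Lem.~\ref{lem:vector-change} the transformed edge $U\Gamma$ is supported by $(0,1)$. Hence the terms $y_i$ in $g_i = \ous(f_i - y_i)$ sit on monomials strictly outside $U\Gamma$, so the restrictions $g_i(y_i, z_1, 0)$ are in fact $y$-independent univariate polynomials in $z_1$. By Rem.~\ref{rem:one-var}(ii), the root set of $g(z_1)$ is exactly $\SolKg$, hence finite and determined by $f$ alone. This is what makes the union $\bigcup_{\rho} \cC_\rho$ finite, with index set of size at most $\deg g$.

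Next, for each $\rho \in \SolKg$ I would form the resultant $R(y_1, y_2, z_1) = \res_{z_2}(g_1, g_2) \in (\KK[y_1, y_2])[z_1]$. By the standard property of the resultant, the multiplicity of $\rho$ as a root of $R(y, z_1)$ equals $\sum_{\alpha} \mathrm{mult}_{(\rho, \alpha)}(g_1, g_2)$, summed over $z_2$-coordinates $\alpha$ of the common zeros above $\rho$. Outside a proper Zariski-closed subset of $y$-space this sum stabilizes at an integer $m_0(\rho)$ equal to the generic multiplicity of $(\rho, 0)$ as a solution of $\ous(f - y) = 0$. By Def.~\ref{def:a-face-mult} together with Rem.~\ref{rem:multipl}, $\AFKgr$ is precisely the locus where this multiplicity strictly exceeds $m_0(\rho)$, which is encoded by the polynomial condition $\partial^{m_0(\rho)}_{z_1} R(y, z_1)\big|_{z_1 = \rho} = 0$, an element of $\KK[\rho][y_1, y_2]$. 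Its zero locus in $\KK^2$ is the curve $\cC_\rho$, and taking the union over $\rho$ yields $\AFKg$, establishing the stated form of the output.

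The main obstacle is justifying that variations of $\mathrm{mult}_\rho R(y, z_1)$ as $y$ varies near a generic value are attributable solely to the multiplicity at $(\rho, 0)$, and not to other fiber solutions $(\rho, \alpha)$ with $\alpha \neq 0$ colliding with, or separating from, $(\rho, 0)$. Pertinence of $\Gamma$ is exploited here: since the $y$-dependence of $g_i$ is confined to monomials strictly above the $z_2 = 0$ axis in the $U\Gamma$-stratification, the portion of the fiber above $\rho$ lying away from $z_2 = 0$ deforms continuously with $y$, and any collision with the axis under $y \to y_\star$ is already captured within $\AFKgr$. A fully rigorous verification would follow the Puiseux-series analysis of the proof of Thm.~\ref{thm:Jf=union-Tmult}, tracking how solutions enter and leave the axis $z_2 = 0$ as $y$ is perturbed, and bookkeeping the induced contributions to the multiplicity of $\rho$ as a root of $R(y, z_1)$.
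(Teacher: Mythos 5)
Your first step is fine and matches the paper: pertinence of $\Gamma$ forces the $y_i$-terms of $g_i=\ous(f_i-y_i)$ off the transformed edge, so $g_i(z_1,0)$ is $y$-independent, the roots of the gcd $g$ are exactly \SolKg, and the union over $\rho$ is finite. Your criterion for a single $\rho$ is also essentially the paper's first half: detecting that the multiplicity of $\rho$ as a root of $R_1=\res_{z_2}(g_1,g_2)$ exceeds its generic value (the paper does this by stripping the $y$-independent content $R_{11}$ and evaluating the primitive part at $z_1=\rho$ to get $J_1$; your condition $\partial^{m_0(\rho)}_{z_1}R(y,z_1)|_{z_1=\rho}=0$ is equivalent, since the lower-order derivative conditions hold identically in $y$).

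The genuine gap is that you use only this one projection, and your argument for why that suffices does not hold. The multiplicity of $\rho$ as a root of $R_1(y,\cdot)$ accumulates the multiplicities of \emph{all} solutions in the fiber $z_1=\rho$, including points $(\rho,\alpha)$ with $\alpha\neq 0$. As $y$ varies, a solution branch with $z_2\neq 0$ can have its $z_1$-coordinate pass through $\rho$ along a codimension-one locus of $y$ (the projection to $y$-space of the curve $\{g_1(y,\rho,z_2)=g_2(y,\rho,z_2)=0,\ z_2\neq0\}$ is typically a curve in $\KK^2$); at such $y$ your derivative condition vanishes even though the multiplicity of $(\rho,0)$ itself does not increase, so these $y$ need not lie in \AFKgr. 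Your appeal to the non-axis part of the fiber "deforming continuously" does not exclude this: continuity does not prevent $z_1(y)=\rho$ from occurring on a curve of $y$-values, and such events are neither collisions with the axis nor "already captured within \AFKgr" (collisions at other axis points $(\beta,0)$ belong to $\AF^{\beta}_{\KK}(\Gamma)$, not \AFKgr). This is precisely why \msresultant also computes $R_2=\res_{z_1}(g_1,g_2)$, evaluates its primitive part at $z_2=0$ to get $J_2$, and returns $\VV(\gcd(J_1,J_2))$: a genuine increase of the multiplicity at $(\rho,0)$ forces a jump in \emph{both} projections, and the gcd extracts the common curve components, discarding the spurious locus described above. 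So as written, your procedure computes a possibly strictly larger union of curves than \AFKg, and in any case it is not a correctness proof of Alg.~\ref{alg:cim-resultant}, whose gcd step is the part that actually needs justification.
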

\begin{proof}
  Let $(\rho, 0)$ be a solution of $g_1  = g_2 = 0$,
  say of multiplicity $\mu$, where $g_i \in \ZZ[y_1, y_2][z_1, z_2]$.
  We will exploit the fact that if we project on $z_1$, then $\rho$
  is a root of the resultant of multiplicity at least $\mu$.

  We consider the resultant of $g_1$ and $g_2$ with respect to $z_2$. This
  results a polynomial $R_1 \in \ZZ[y_1, y_2][z_1]$. Obviously, $\rho$ is a root
  of $g$ of multiplicity at least $\mu$, say $\mu_1 \geq \mu$.
  We also know that $R_1$  factors as
  $R_1 = R_{11} R_{12}$, where $R_{11} \in \ZZ[z_1]$ and
  $R_{12} \in \ZZ[y_1, y_2][z_1]$. The presence of the factor $R_{11}$ is guaranteed because the
  system has solutions of the form $(\rho, 0)$.
  We divide out the factors of $R_1$ that depend only on $z_1$, that is $R_{11}$,
  and we end up with the polynomial $R_{12}$.
    If we do the substitution $z_1 = \rho$ to $R_{12}$, then the resulting
  polynomial $J_1$  is in $\ZZ[\rho][y_1, y_2]$ and it is non-zero.
  If we want $\rho$ to be of higher multiplicity as a root of $R_1$,
  then $J_1$ should be zero.
  Thus, $J_1$ is a superset of the part of the multiplicity set $\AFKgr$ 
  that emanates from $(\rho, 0)$.
  Then, we consider the resultant of $g_1$ and $g_2$
  with respect to $z_1$.
  We proceed as before, mutatis mutandis,
  where now the $R_{21}$ is a power of $z_2$.
  At the end we compute
  a superset of the part of the multiplicity set $\AFKg$
  emanating from $(\rho, 0)$.

  If we want the solution $(\rho, 0)$ to have multiplicity higher than $\mu$ as
  a solution to the system $g_1 = g_2 = 0$, then both $\rho$ and $0$ should be
  roots of higher multiplicity of the corresponding resultants, that is $R_1$
  and $R_2$, respectively. Thus, the gcd of $J_1$ and $J_2$ should vanish. The
  latter is $J_{\rho} \in \ZZ[\rho][y_1, y_2]$
  which is the implicit equation of a  curve $\cK_{\rho} = \AFKgr$
  that is the \amult set.
  For the complex case, the \amult set $\AFCgr$ is the \tmult set
  $\TFCgr$. For the real case, following Cor.~\ref{cor:segment-decomposition}, we have to
  check if a line segment of $\AFCgr\cap \RR^2$ belongs to $\TFRgr$ or not. 
%  If it is not empty,
%  then it equals $\AFRgr$.

  By repeating the same  procedure over all solutions $(\rho, 0)$ of  the system
  $g_1 = g_2 = 0$,  we obtain the \tmult set $\TFRg$.
\end{proof}

\subsubsection{The complexity of the complex \tmult set}

\begin{theorem}[Complexity of \msresultant over $\CC$]
  \label{thm:cim-res-complexity-C}
  Consider the polynomials $g_1, g_2 \in \ZZ[y_1, y_2][z_1, z_2]$ which are of
  degree $d$ with respect to $z_1$ and $z_2$ and of degree $1$ with respect to $y_1$ and $y_2$
  and bitsize $\tau$.
  Let $g \in \ZZ[z_1]$ be of size  $(d, \tau)$.
  The bit complexity of $\msresultant(g_1,g_2,g,\CC)$,
  Alg.~\ref{alg:cim-resultant}, is $\sOB(d^7 + d^6\tau)$.
\end{theorem}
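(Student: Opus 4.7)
The plan is to decompose \msresultant into its constituent steps --- the two bivariate resultants, the extraction of the pure $z_1$-factor (and the analogous $z_2^k$-factor), the substitution at the roots of $g$, and the final bivariate gcd --- and to bound the bit cost of each, invoking Hadamard-type size bounds together with fast polynomial arithmetic (multipoint evaluation and interpolation, half-gcd, Chinese remaindering).

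First I would bound the sizes of the intermediate resultants. Viewing $g_1, g_2$ as polynomials in $z_2$ with coefficients in $\ZZ[y_1, y_2, z_1]$, each entry of the $2d \times 2d$ Sylvester matrix has degrees $(d, 1, 1)$ in $(z_1, y_1, y_2)$ and bitsize $\tau$. By Hadamard bounds on the determinant, $R_1 = \res_{z_2}(g_1, g_2) \in \ZZ[y_1, y_2, z_1]$ has degrees $(O(d^2), O(d), O(d))$ and bitsize $\sOO(d\tau)$; symmetrically for $R_2$. I would compute each resultant via an evaluation-interpolation scheme over a grid of $O(d^4)$ rational points of bitsize $O(\log d)$: at each point the two polynomials specialize to univariate polynomials in $z_2$ of size $(d, \sOO(\tau))$, whose resultant costs $\sOB(d^2 + d\tau)$ by fast Euclidean methods, and fast multipoint interpolation then recovers all coefficients of $R_1$. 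The total cost of this phase is $\sOB(d^6 + d^5\tau)$.

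Next, the pure factors $R_{11} \in \ZZ[z_1]$ and $R_{21} \in \ZZ[z_2]$ are extracted as contents of $R_1, R_2$ viewed as polynomials in $(y_1, y_2)$ --- a cascade of univariate gcds whose total cost is dominated by the input sizes, hence within $\sOB(d^5\tau)$. The polynomial $J_2 := R_{22}(y_1, y_2, 0) \in \ZZ[y_1, y_2]$ of degrees $O(d)$ and bitsize $\sOO(d\tau)$ is obtained by coefficient extraction.

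The dominant phase is the combined substitution and gcd. Rather than iterating over the $O(d)$ roots of $g$ explicitly --- which would force expensive arithmetic in the number fields $\QQ(\rho)$ --- I would perform the substitution implicitly by reducing $R_{12}$ modulo $g(z_1)$ coefficient-by-coefficient in $(y_1, y_2)$, yielding $\tilde R_{12} \in (\ZZ[z_1]/g)[y_1, y_2]$ in $\sOB(d^5\tau)$ bit operations via fast division. I would then compute the bivariate gcd of $\tilde R_{12}$ and $J_2$ over $\ZZ[z_1]/g$: after a cheap squarefree factorization of $g$ (cost $\sOB(d^3 + d^2\tau)$, absorbed) the ring decomposes under CRT into products of $\ZZ[z_1]/g_j$, and the gcd in each component can be computed by evaluation-interpolation in one of $y_1, y_2$ combined with fast univariate gcd. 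The resulting bivariate gcds are then lifted back to the original representation. The sizes involved give a bound of $\sOB(d^7 + d^6\tau)$ for this phase, which dominates the earlier phases and yields the claimed overall complexity.

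The main obstacle is the bookkeeping of bit-size growth through the successive eliminations: the substitution introduces a factor proportional to $\deg g = d$ on top of the $d\tau$-sized coefficients of $R_{12}$, and keeping the subsequent bivariate gcd within the claimed bound requires working modularly throughout (either mod $g$ or via evaluation mod suitable primes). A secondary subtlety is the possible reducibility of $g$, which is handled cleanly by squarefree factorization plus Chinese remainder reconstruction.
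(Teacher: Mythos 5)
Your decomposition is the same as the paper's: the same Hadamard-type size bounds for $R_1,R_2$ (degrees $(\OO(d^2),\OO(d),\OO(d))$, bitsize $\sOO(d\tau)$), content extraction to get $R_{12},R_{22}$, then substitution at the roots of $g$ followed by a bivariate gcd, with that last phase dominating at $\sOB(d^7+d^6\tau)$. Where you diverge is in the realization of the two expensive steps. For the resultants, the paper uses fast subresultants ($\sOO(d)$ trivariate multiplications at $\sOB(d^5\tau)$ each, total $\sOB(d^6\tau)$), whereas you use evaluation--interpolation over $\OO(d^4)$ points; note that the per-point univariate resultant costs $\sOB(d^2\tau+d^3)$ (its output alone has bitsize $\sOO(d\tau+d^2)$, and the known bound is quadratic in the degree), so this phase is really $\sOB(d^7+d^6\tau)$ rather than your stated $\sOB(d^6+d^5\tau)$ --- still within the theorem's bound, so nothing breaks. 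For the dominant phase, the paper iterates over the at most $d$ roots $\rho$ of $g$ and runs Langemyr's gcd over $\ZZ[\rho][y_1,y_2]$ at $\sOB(d^6+d^5\tau)$ per root; your reduction modulo $g$ with CRT is essentially the algebraic-number-free variant that the paper itself sketches in the remark following the theorem (via the Poisson formula $\res(R_{12},g,z_1)=\prod_{g(\rho)=0}R_{12}(\rho,y_1,y_2)$), and it buys integer-coefficient output at the same asymptotic cost. One small caveat in your version: squarefree factorization of $g$ does not make the components $\ZZ[z_1]/g_j$ fields, so the component-wise bivariate gcd can meet zero divisors and degree patterns that differ between the (conjugacy classes of) roots inside one $g_j$; you need dynamic evaluation (or irreducible factors, or the Poisson-formula route) to make this step rigorous, though this does not affect the claimed complexity.
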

\begin{proof}
  The resultant $R_1$ is a polynomial in $(z_1, y_1, y_2)$ of degree
  $(\OO(d^2), \OO(d), \OO(d))$ respectively \cite[Prop.~8.49]{BPR03}, and bitsize $\sOO(d \tau)$
  \cite[Prop.~8.50]{BPR03}.
  To compute $R_1$ we employ fast subresultant algorithms, e.g., \cite{LicRoy-sub-res-01}.
  We perform $\sOO(d)$ operations.
  Each operation consists of multiplying two trivariate polynomials
  in $z_1, y_1, y_2$; with degrees and bitsize as mentioned before.
  Each multiplications costs $\sOB(d^5\tau)$
  and so the overall costs for computing $R_1$ is $\sOB(d^6\tau)$.

  To compute $R_{12}$ we consider $R_1$ as a bivariate polynomial in $y_1$ and $y_2$
  with coefficients in $\ZZ[z_1]$ and we compute its primitive part;
  that is to compute the gcd of all the coefficients and then divide all of them with it.
  The coefficients are polynomials in $z_1$ of degree $\sOO(d^2)$ and bitsize $\sOB(d \tau)$.
  We can compute their common gcd in
  $\sOB(d^6 + d^5\tau)$ with Las Vegas algorithm \cite[Lem.~2.2]{KRTZ-ptopoo-20}.
  The cost of the exact division is dominated by the cost of this operation.
  The polynomial $R_{12} \in \ZZ[y_1, y_2][x_1]$
  has degree(s) $(\OO(d^2), \OO(d), \OO(d))$ and bitsize $\sOO(d^2 + d \tau)$.

  The same complexity and bitsize  bounds hold for the computation of $R_2$ and $R_{22}$.

  The polynomial $g$ is of size $(d, \tau) $ and so the cost for solving is
  $\sOB(d^3 + d^2\tau)$ \cite{Pan-usolve-02}. Then, for each root of $g$, say
  $\rho$, we make the substituion $(z_1, z_2) \gets (\rho, 0)$ to obtain the
  polynomials $J_1$ and $J_2$ and we compute their $\gcd$, that is
  $J = \gcd(J_1, J_2) \in \ZZ[\rho][y_1, y_2]$. The latter is of degree $d$ with respect 
  to both $y_1$ anf $y_2$. Its coefficients are polynomials in $\rho$ of bitsize
  $\sOO(d^3 + d^2\tau)$. The expected cost of the $\gcd$ is
  $\sOB(d^6 + d^5 \tau)$ \cite{Langemyr-single-90}, that is almost linear in the
  size of the output. As we have to do this at most $d$ times in the worst case,
  which corresponds to the different roots of $g$, the bounds follows for the
  complex case.
\end{proof}

\begin{remark}  
  For the complex case, we can avoid working with algebraic numbers. We  notice
  that we perform computations with all the roots of $g$
  and so
  we exploit the Poisson formula of the resultant. 
  Thus, we first consider the resultant of $R_{12}$ and $g$,
  that is $H \gets \res(R_{12}, g, z_1) \in \ZZ[y_1, y_2]$;
  this is the evaluation of $R_{12}$ over all the roots of $g$.
  Then, we consider the $\gcd$ with $R_{22}$, that is
  $\gcd(R_{22}, H) \in \ZZ[y_1, y_2]$.

  The theoretical complexity is same as considering each root independently
  and perform $\gcd$ computations in an extension field.
  The gain is that all the output polynomials have integer coefficients.
\end{remark}

\begin{algorithm2e}[ht]
  %\scriptsize \dontprintsemicolon \linesnumbered
  \SetFuncSty{textsc} \SetKw{RET}{{\sc return}} \SetKw{OUT}{{\sc output \ }}
	\SetKwInOut{Input}{Input}
	\SetKwInOut{Output}{Output}
    \SetKwInOut{Require}{Require}
    \Input{$(g_1, g_2) \in (\ZZ[y_{1} y_{2}])[z_{1}, z_{2}]$,
      $h \in \ZZ[z_1]$, $\KK \in \{\RR, \CC\}$ }
    \Require{ The  $g_1, g_2$ are the polynomials  we obtain
      after we apply a toric change of variables to $f_1, f_2$, that
      corresponds to a pertinent edge $\Gamma$ of the Minkowski sum
      $\NP(f_1) \oplus \NP(f_2)$.
      \newline
      The distinct roots of $h$ are $\rho_1, \dots, \rho_r$, and
      $(\rho_i, 0)$ is a solution to the system $g_1 = g_2 = 0$, for all
      $i \in [r]$.
      Notice that
      $h(z_1) = c \prod_{i=1}^{r}(z_1 - \rho_i)^{\mu_i}$, where $c \in \ZZ$. }
    \Output{The multiplicity set $\Mfg$ corresponding to a pertinent edge $\Gamma$.}

    \BlankLine

    $R_1 \gets \res(g_1, g_2, z_2)$ \;
    \tcc{It holds $R_1 = R_{11}R_{12}$,  $R_{11} \in \ZZ[z_1]$, $R_{12} \in \ZZ[y_1, y_2][z_1]$}
    \tcc{In particular $R_{11}(z_1) = c_1 \prod_{i=1}^{m}(z_1 - \rho_i)^{\mu_{1,i}}$
      and $c_1 \in \ZZ$.}

    \BlankLine

    $R_2 \gets \res(g_1, g_2, z_1)$ \;
    \tcc{It holds $R_2 = R_{21}R_{22}$, $R_{21} \in \ZZ[z_2]$, $R_{22} \in \ZZ[y_1, y_2][z_2]$}
    \tcc{In particular, $R_{21} = z_2^{\mu_2}$, for some $\delta_2 \in \NN$.}

%    $\tilde R_1 \gets R_1 / R_{11} \in \ZZ[y_1, y_2][z_1]$ \;     $\tilde R_2 \gets R_2 / R_{21} \in \ZZ[y_1, y_2][z_2]$ \;

    \BlankLine
    $\TFKg \gets \varnothing$ \;
    
    \If{$\KK = \RR$}{
      \For{ every real root $\varrho$ of $g$ }{
        $J_1 \gets subs(z_1 = \varrho, R_{12})$ \;
        $J_2 \gets subs(z_2 = 0, R_{22})$ \;
        $J_{\rho} \gets \gcd(J_1, J_2) \in \ZZ[\rho][y_1, y_2]$ \;
        Partition $\VV(J_{\rho})$ to curve segments based on its topology \; 
        Check which segments contributes to the Jelonek set \;
       }
    }
    \BlankLine
    \If{$\KK = \CC$}{
      $H \gets \res(R_{12}, g, z_1) =
      \prod_{g(\rho) = 0} R_{12}(\varrho, y_1, y_2) \in \ZZ[y_1, y_2]$ \;

      $J \gets \gcd(R_{22}, H) \in \ZZ[y_1, y_2]$ \;

      $\TFKg \gets \VV(J)$ \;
    }

    \RET $\TFKg$ \;

      \caption{\msresultant$(g_1, g_2, g, \KK)$}
  \label{alg:cim-resultant}
\end{algorithm2e}

\subsubsection{Testing the emptiness of $\TFRgr$ and its complexity}
\label{sec:test-empty-mset}

% The multiplicity set for an edge $\Gamma$, $\Mfg$, is the union of
% semi-algebraic curves (Proposition~\ref{prop:char-mult}), each of which depends on a (real) root of a univariate
% polynomial; this is the polynomial $g$ (Line~\label{alg:sj-l-g}) of the algorithm.
% We consider only the real roots of $g$, say $\rho$, to compute the real Jelonek set.
% However, in this case not all the corresponding
% curves $\cC_{\rho}$ contribute to the (real) Jelonek set. This is so
% because there might be two sequences of complex (conjugate) numbers, say
% $\{\varrho_k\}_{k \in \NN}$ and $\{ \overline{\varrho}_k\}_{k \in \NN}$, that
% converge to a real point $(\rho, 0)$. Hence, we need  a procedure to test whether
% $\cC_{\rho}$ contributes to the Jelonek set $\cJ_f$,
% or in other words to test whether $\Mfgr$ is empty or not.

Our setting is as follows. Assume that for a pertinent edge $\Gamma \in \Delta$ we
have performed the toric transformation to the system and have computed a real value
$\rho$ resulting in a polynomial in $J_{\rho} \in \ZZ[\rho][y_1, y_2]$
the zero set of which is a curve $\VV_{\RR}(J_{\rho}) = \AFRgr \subset \RR^2$. Let us call this curve $\cCr$. 
%$\VV_{\RR}(J_{\rho}) = \AFRgr =: \cK_{\rho}\subset \RR^2$.
The degree of $J_{\rho}$, with respect to $y_1$ and $y_2$, is $\OO(d)$ 
and its coefficients are polynomials in $\rho$ of bitsize $\sOO(d^3 + d^2\tau)$; also the $\rho$ is the real root of a polynomial of size $(\OO(d), \sOO(d + \tau))$.

First, we check the  condition of Thm.~\ref{thm:special_case-odd}:
We pick a point $p\in\cC_\rho$ and we  compute the value $\delta:=\mu_\rho(p)-\mu_\rho(0)$.
Due to the preprocessing procedure, recall that $0\not\in\cJ_f$.
If $\delta$ is odd, Thm.~\ref{thm:special_case-odd} shows that $\cC_\rho\subset\cJ_f$,
and we are done. The complexity of this step is dominated by the complexity of the subsequent steps, so we do not elaborate further. 

If  Thm.~\ref{thm:special_case-odd} does not apply, then
we need to consider the intersection points of $\cCr$ with the discriminant of $f$, say $\cD$.  
Recall that $\cD$ is defined by a polynomial in $\ZZ[y_1, y_2]$ and has size $(\sOO(d^2), \sOO(d^2 + d\tau))$. 
Consequently, we partition $\cC$ using these intersection points
and we test which curve segments contribute to the Jelonek set of $f$. 

To do so, we choose a generic point, say $p$, at each curve segment
and we rely on the fact that the following two conditions are equivalent:
\begin{enumerate}[(C1)]
  \item\label{it:p_generic} The point $p$ is a generic point in $\cJ_f$.
  \item\label{it:p_viscinity}
   There exists a point $q\not\in\cJ_f$ 
  outside a small-enough ball in $\RR^2$ around $p$ such that the number of real
  solutions (counted with multiplicities) of $f-p=0$ is smaller than the
  number of real solutions to $f-q=0$.
\end{enumerate}

\paragraph{Partition $\cCr$ to curve segments}

Our strategy to partition $\cCr$ is as follows. We compute the topology of $\cCr$, that is an abstract graph that is isotopic to the curve in $\RR^2$ \cite[p.~184]{BoiTei-book-2007}. Then, we further refine the graph by considering the intersection points with $\cD_f$.
This algorithm also provides points on its of the corresponding curve segments.  

To compute the topology of $\cCr$ we employ the state-of-the-art algorithm in \cite{DDRRS-curve-18}. Unfortunately, this algorithm considers polynomials with integer coefficients, which is not our case. We notice that the bottleneck in the algorithm for the computing the topology is the projection on the coordinate axis and the solutions of univariate polynomials; the solution of which correspond to the coordinates of the critical points of the curve. 
	The project $\cCr$ on the $y_1$ (or $y_2$ axis) we consider the resultant of $\cJ_\rho$, and its derivative with respect to $y_1$, we eliminate $y_2$ and we obtain a polynomial in $y_1$. 
	The projection costs $\sOB(d^6 \tau)$ \cite[Prop.~8]{det-jsc-2009}.
	This results a univariate polynomial in $\ZZ[\rho][y_1]$
	of size $(\sOO(d^2), \sOO(d \tau))$.
	We can solve this polynomial, that have coefficients in an extension field, in $\sOB(d^{11} + d^{10}\tau)$ \cite{st-sef-jsc-2019}. The latter bound dominates the complexity of computing the topology of $\cCr$. 
	
Then we need to consider the intersection points of $\cCr$ and $\cD_f$.
This costs less that computing the topology of $\cCr$ as the latter
(implicitly) requires to solve the system of $\cJ_{\rho}$ and its derivatives and it involves polynomials of higher degree and bitsize than the polynomial defining $\cD_f$.  

At this point we can assume that we have partitioned $\cCr$ to curve segments and we have computed a generic point at each, say $p$.
One the coordinates is a rational number of bitsize at most 
$\sOO(d^7 + d^6 \tau)$.

Therefore, it suffices to test condition~\ref{it:p_viscinity} for a
point $p$ in $\cC_\rho$. To do this, we perform the following steps.

\begin{enumerate}[(S1)]
   \item We compute a point $q = (q_1, q_2) \in \RR^2$ such that the segment from $p$
    to $q$ does not intersect any other curve $\cC_{\rho'}$ and the discriminant
    curve of $f$. The curves $\cC_{\rho'}$ correspond to \amult sets
    emanating from other roots $\rho'$ and/or edges of $\NP(f)$. We obtain the
    discriminant curve by eliminating $x_1$ and $x_2$ from the equations
    $\{ f_1(x_1, x_2) - y_1 , f_2(x_1, x_2) - y_2 ,  \det( \Jac(f_1, f_2)) \}$.

   \item We check whether the systems $f - p = 0$ and $f - q = 0$ have the
    same number of real solutions. If this is the case, then 
    	the curve segment of $\cCr$ that $p$ belongs to does
    	is not part of the multiplicity set $\TFRg$, following Def.~\ref{def:t-face-mult}.
\end{enumerate}
The above three steps are enough for our purposes: The set
$\RR^2\setminus\mathcal{D}_f\cup\cJ_f$ is a union of connected components, each
of which has a constant number of real preimages under $f$. A ball $B$ small
enough around a generic point $p\in\cC_\rho$ is either included in one of the
components of $\RR^2\setminus\mathcal{D}_f\cup\cJ_f$ or intersects exactly two
of them. Assume that $p\in\cD_f$.
%, we deduce  the above discussion, together with the definition of $\cD_f$ yields the following. 

%\begin{itemize}
%
%	\item[-] If $p\not\in\cD_f\cup\cJ_f$, then $N_p = N_q = N_{q'}$,
%	
%	\item[-] If $p\in\cD_f\setminus\cJ_f$, then $N_p = N_q > N_{q'}$, or $N_p = N_{q'} > N_{q}$, and
%
%\end{itemize} 
We conclude from condition \ref{it:p_viscinity} that if we sample two points $q$
and $q'$, following (S1), then they belong to two different connected components
of $\RR^2\setminus\mathcal{D}_f\cup\cJ_f$ adjacent to $\cJ_f$ if and only if the
number of real solutions (counted with multiplicities) of the system $f-p=0$ is
smaller than that of either $f-q=0$, or $f-q'=0$.

There is a straightforward way of realizing the steps to test the where the corresponding curve segment contributes to the Jelonek set by solving various polynomial systems. Instead, we will avoid solving
systems (as much as possible) and we will rely on (separation) bounds of their roots
\cite{emt-dmm-j} to compute the points of interest.

The polynomial $J_{\rho}$ has degree $\OO(d)$ with respect to $\rho$, $y_1$, and
$y_2$ and its bitsize is $\OO(d^3 + d^2\tau)$
\cite{Langemyr-single-90,Langemyr-single-90}; the latter follows from the gcd
computations in an extension field.

\paragraph{(S1) Compute the intermediate point $q$}

\begin{figure}[h]
  \centering
  \includegraphics[scale=0.9]{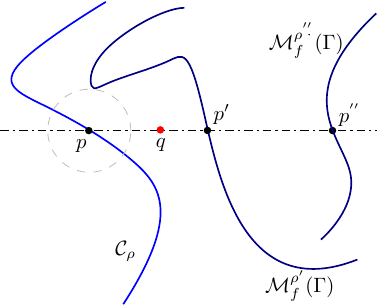}
  \caption{How to test  a curve segment of $\cC_{\rho} $ contributes to the Jelonek set.
    The horizontal line $y_1 = p_1$ intersects the other curves of other multiplicity sets. If $p'$ is the closest intersection to $p$, then $q$ is a point in the segment $p\, p'$.
  }
  
  \label{fig:compute-disc}
\end{figure}

One of the coordinates of $p$ is a rational number. Assume, without loss of generality that this is the second one and its value is $c$.
That is $p = (p_1, c)$, where $p_1$ is a root of $J_{\rho, c} \in \ZZ[\rho][y_1]$
of degree $d$ and bitsize $\sOO(d^8 + d^7\tau)$.
Consider the line $y_2 = c$ and the intersection of this line with all the other curves, say $\cC_{\rho'}$ with equation $J_{\rho'}(y_1, y_2)$ that
correspond to other multiplicity sets, and the discriminant curve of $f$.
Let the closest intersection to $p$ be a point $p'$. Then it suffices to choose
the middle point of the segment $p\, p'$.
Even more, we can estimate, using root bounds how close to $p$ the point $p'$ could be in the worst case. Then choose a point $q$ that it closest to $p$ with respect to this worst case estimation.
Using root bounds, there is no need to solve the corresponding equations.

Regarding the curves corresponding to multiplicity sets, the polynomials
$J_{\rho'} \in \ZZ[\rho'][y_1, y_2]$ have degree $\OO(d)$ and bitsize
$\sOO(d^3 + d^2 \tau)$. Therefore, if we substitute $c$ for $y_2$, we end up
with a polynomial in $J_{\rho', c} \in \ZZ[\rho'][y_1]$; it has degree $d$ andbitsize $\sOO(d^8 + d^7\tau)$.

Now, we consider the polynomial $J_{\rho, c} \, J_{\rho', c} \in \ZZ[\rho, \rho'][y_1]$.
Then, among its roots are the $p_1$ and the first coordinate of the closest point of $\cC_{\rho'}$ to  $p'$, say $p'_1$. Therefore, the separation bound $| p_1 - p'_1| \geq 2^{-\sOO(d^{10} + d^9\tau)}$ \cite{st-issac-2012} gives a way to choose (the first coordinate) of the intermediate point $q$.
Hence, the bitsize of the coordinates of $q$ is $\sOO(d^7 + d^6\tau)$.

Consider the system $f_1(x_1, x_2) - y_1 = f_2(x_1, x_2) - y_2 =
\det( \Jac(f_1, f_2)) = 0$. This is a system of three equations in four variables. If we eliminate $x_1$ and $x_2$,
then we obtain a polynomial $\mathcal{D} \in \ZZ[y_1, y_2]$
which is the discriminant of $f$. It has degree $\OO(d^2)$
and its bitsize is $\sOO(d^2 \tau)$.
Then, we work similarly as in the case of the curves that correspond to the multiplicity sets. The bounds for $q$ are similar.

To summarize, if we add a number of order $2^{\sOO(d^{10} + d^9\tau)}$ to the
coordinates of $p$, then we obtain a point $q$ such that the segment $p \, q$
does not intersect neither any other curve corresponding to another multiplicity
set, nor the discriminant curve.

\paragraph{(S2) Count the number of real roots}

Having computed the points $p=(p_1, p_2) \in \cK_{\rho}$ and $q = (q_1, q_2)$
it remains to compute the real roots of the systems $f(x_1, x_2) - p = 0$ and $f(x_1, x_2) - q = 0$.

The second system consists of polynomials in $\ZZ[y_1, y_2]$
of degree $d$ and bitsize $\sOO(d^{10} + d^9\tau)$.

Using \cite{BrSa-0dim-16}, we can solve the system and thus compute its real roots in $\sOB(d^{14.37} + d^{13.37}\tau)$,
using the fastest algorithm for matrix multiplicaiton.

The polynomials of the first system have coefficients in an extension field.
Recall that $p_1 = c \in \ZZ$, where $c \leq 2^{\sOO(d^7 + d^6\tau)}$
  and $p_2$ is a root of a polynomial $J_{\rho}(c, y_1) \in \ZZ[\rho][y_1]$ and $\rho$ is a real root of the polynomial $g(z_1) =0$.
  Thus, we can compute the real roots of $f(y_1, y_2) - p = 0$
  by computing the real solutions of the  system
  \[
    f_1(x_1, x_2) - y_1 =
    f_1(x_1, x_2) - c =
    J_{z_1}(c, y_1) =
    g(z_1) = 0.
  \]
  This is a system of four equations in four unknowns, that is
  $x_1, x_2, y_1, z_1$. The degree of the polynomials is bounded by $\OO(d)$ and
  their bitsize is $\sOO(d^7 + d^6\tau)$. We can solve this system in
  $\sOB(d^{22.11} + d^{21.11}\tau)$ \cite{BrSa-0dim-16}.
  The latter bound dominates the complexity of the overall procedure.

\begin{lemma}
  The complexity of testing emptiness of $\TFRgr$ is
  $\sOB(d^{22.11} + d^{21.11}\tau)$.
\end{lemma}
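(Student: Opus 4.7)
The plan is to bound the cost of each of the three sub-steps (S1), (S2), (S3) described in Sec.~\ref{sec:test-empty-mset} separately, and then observe that (S3) dominates. First I would set up the sizes of the objects that enter the procedure: the polynomial $J_\rho \in \ZZ[\rho][y_1,y_2]$ has degree $\OO(d)$ in each of $\rho,y_1,y_2$ and bitsize $\sOO(d^3+d^2\tau)$, while $\rho$ is itself a real root of $g \in \ZZ[z_1]$ of size $(d,\tau)$, so $|\rho| \le 2^{\sOO(d+\tau)}$. These sizes are already derived in the preceding paragraphs and are the only input data the complexity estimates depend on.

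For (S1) I would bound the cost of locating a real point $p \in \cC_\rho$ by the cost of isolating the real roots of $J_\rho(y_1,c)$, a univariate polynomial over $\ZZ[\rho]$ of degree $\OO(d)$ and bitsize $\sOO(d^5+d^4\tau)$, which by the univariate real root isolation of~\cite{Pan-usolve-02} costs $\sOB(d^8+d^7\tau)$. For (S2) no system needs to be solved: a separation bound of the type in~\cite{st-issac-2012} applied to the product $J_{\rho,c}\cdot J_{\rho',c}$, together with the discriminant bound for $\mathcal{D}\in\ZZ[y_1,y_2]$ of degree $\OO(d^2)$ and bitsize $\sOO(d^2\tau)$, shows that displacing $p$ by a rational of bitsize $\sOO(d^7+d^6\tau)$ is sufficient to guarantee that the segment $p\,q$ misses both the other curves $\cC_{\rho'}$ and the discriminant curve. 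The work here is negligible compared to (S1) and (S3).

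For (S3) I would count the real solutions of the two bivariate systems $f(x_1,x_2) - p = 0$ and $f(x_1,x_2)-q = 0$. The system for $q$ consists of polynomials in $\ZZ[x_1,x_2]$ of degree $\OO(d)$ and bitsize $\sOO(d^6+d^5\tau)$ and can be solved in $\sOB(d^{11.37}+d^{10.37}\tau)$ via~\cite{BrSa-0dim-16}. For the system at $p$, which has an algebraic coordinate, I would replace it by the equivalent $0$-dimensional system in four variables
\[
  f_1(x_1,x_2)-y_1 \;=\; f_2(x_1,x_2)-c \;=\; J_\rho(c,y_1) \;=\; g(z_1) \;=\; 0,
\]
whose polynomials have degree $\OO(d)$ and bitsize $\sOO(d^3+d^2\tau)$. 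Solving it by~\cite{BrSa-0dim-16} costs $\sOB(d^{19.11}+d^{18.11}\tau)$. Taking the maximum over the three sub-steps gives the claimed bound.

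The main obstacle, and the only step that genuinely has to be argued rather than looked up, is the reduction in (S3) that encodes $f-p=0$ as a four-variable polynomial system with integer coefficients: one must verify that the real solutions of the encoded system project bijectively onto the real solutions of $f-p=0$ (so that counting is correct), and that the bitsize $\sOO(d^3+d^2\tau)$ of the four polynomials is not inflated by the appearance of $\rho$ and $c$. Once this encoding is justified and the size bounds of~\cite{Pan-usolve-02,BrSa-0dim-16,st-issac-2012} are invoked, the lemma is immediate by comparing the three bounds.
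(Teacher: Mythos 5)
Your proposal is correct and follows essentially the same route as the paper: the lemma is proved there precisely by the (S1)--(S3) cost analysis you reproduce, with (S1) reduced to isolating the real roots of $J_{\rho}(y_1,c)$, (S2) handled by separation bounds rather than system solving, and (S3) dominated by solving the four-variable integer encoding of $f-p=0$ via \cite{BrSa-0dim-16} in $\sOB(d^{19.11}+d^{18.11}\tau)$. The only deviations are cosmetic (your citation for the univariate solving step in (S1), and your corrected form $f_2(x_1,x_2)-c$ of the four-variable system, which matches the paper's evident intent), so the argument coincides with the paper's.
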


\begin{theorem}[Complexity of \msresultant over $\RR$]
  \label{thm:cim-res-complexity-R}
  Consider the polynomials $g_1, g_2 \in \ZZ[y_1, y_2][z_1, z_2]$
  which are of degree $d$ with respect to $z_1$ and $z_2$
  and of degree $1$ with respect to $y_1$ and $y_2$.
  Let $g \in \ZZ[z_1]$ be of degree $d$ and bitsize $\tau$.
  The bit complexity of $\msresultant(g_1, g_2, g, \RR)$,
  Alg.~\ref{alg:cim-resultant},
  $\sOB(d^{22.11} + d^{21.11}\tau)$.
\end{theorem}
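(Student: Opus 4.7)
The plan is to reduce the argument to the complex-case bound of Thm.~\ref{thm:cim-res-complexity-C} plus the cost of the real emptiness test analysed in Sec.~\ref{sec:test-empty-mset} and summarised in the lemma immediately preceding the theorem. Concretely, I would decompose the real branch of Alg.~\ref{alg:cim-resultant} into (i) the operations that are shared with the complex branch, namely computing the two resultants $R_1=R_{11}R_{12}$ and $R_2=R_{21}R_{22}$ via fast subresultants, extracting the pure-$z_1$ (resp. pure-$z_2$) factor, isolating the real roots of $g\in\ZZ[z_1]$, and forming $J_\rho=\gcd(J_1,J_2)\in\ZZ[\rho][y_1,y_2]$ for every real $\rho\in\SolRg$; and (ii) the real-specific step of deciding, for each resulting curve $\cC_\rho=\VV_\RR(J_\rho)$, whether $\TFRgr$ is empty or equal to $\cC_\rho$ (Prop.~\ref{prop:tmult=amult-R}).

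For (i), I would reuse, essentially verbatim, the bounds from the proof of Thm.~\ref{thm:cim-res-complexity-C}: the two resultants and their primitive-part factorisations cost $\sOB(d^6\tau)$ each, the real-root isolation of $g$ costs $\sOB(d^3+d^2\tau)$ by \cite{Pan-usolve-02}, and the $\gcd$ in the algebraic extension at every real root contributes $\sOB(d^6+d^5\tau)$, for an overall $\sOB(d^7+d^6\tau)$. For (ii), the dominating subtask is the one identified in the lemma above, namely solving the $4$-variable $0$-dimensional system
\[
  f_1(x_1,x_2)-y_1=f_2(x_1,x_2)-c=J_\rho(c,y_1)=g(z_1)=0
\]
via the Brand--Sagraloff solver \cite{BrSa-0dim-16}, which runs in $\sOB(d^{19.11}+d^{18.11}\tau)$. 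Summing the two contributions and keeping the dominant term yields exactly the claimed bound.

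The main obstacle, and the point I would most carefully justify, is that the outer loop over the real roots of $g$ must not contribute an extra factor of $d$: a naive per-root analysis would only give $\sOB(d^{20.11}+d^{19.11}\tau)$. I would dispose of this by observing that incorporating $g(z_1)=0$ as one of the four equations encodes all real roots $\rho$ simultaneously, so the expensive system-solve is performed once rather than once per $\rho$. An analogous argument handles the separation-bound computations used to place the auxiliary points $p$ and $q$ in steps (S1)--(S2): the bitsize estimates $2^{\sOO(d^7+d^6\tau)}$ already control all choices of $\rho$ collectively via a single resultant over $\ZZ[\rho]$, so no further multiplicative $d$ sneaks in. Once this collective-treatment argument is in place, combining (i) and (ii) and taking the maximum finishes the proof.
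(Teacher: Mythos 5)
Your decomposition is exactly the paper's: the operations shared with the complex branch are bounded as in Thm.~\ref{thm:cim-res-complexity-C} by $\sOB(d^{7}+d^{6}\tau)$, and the total is dominated by the emptiness test of Sec.~\ref{sec:test-empty-mset}, whose four-variable zero-dimensional solve via \cite{BrSa-0dim-16} costs $\sOB(d^{19.11}+d^{18.11}\tau)$, so the proposal is correct and follows essentially the same route. Your explicit justification that appending $g(z_1)=0$ to that system treats all real roots $\rho$ collectively (and that the cheaper $f-q=0$ solves and separation-bound computations stay dominated even when repeated per root), so that no extra factor of $d$ enters, is a point the paper leaves implicit --- its lemma is stated per $\rho$ --- and it is consistent with the paper's construction.
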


\section{Implementation}
\label{sec:implementation}

We have implemented in \textsc{maple} a prototype version of our algorithm for
computing the Jelonek set, \sjalg
(Alg.~\ref{alg:sparse-Jelonek-2}). We have also implemented Jelonek's algorithm
\cite{Jel01c} (Alg.~\ref{alg:Jelonek_2}). Our code also uses \textsc{convex}
\cite{convex-soft} to perform some polyhedral computations and to
\textsc{multires}%
\footnote{http://www-sop.inria.fr/members/Bernard.Mourrain/multires.html} to
perform some polynomial manipulations.

A sample use of our software to compute the Jelonek set of the polynomials
$(f_1, f_2) =
1 + x y + 2 x^2 y^2 -7 x^2  y/10 - 3 x^3 y^2,
  g := 1+3 x y -4 x^2 y^2 +5 x^3 y^3  -6 x^4 y^4 + 3^7 x^{10} y^4/2^5  -54 x^{6} y^3 + 5103 x^{9} y^3/320 + x^7 y^2 + x^4 y)
  $
  is as follows:
{\footnotesize
  \begin{lstlisting}
  restart;
  libname := "path to convex library", libname:
  with(convex):
  read("/path to/multires.mpl"):
  read("/path to/Jelonek.mpl"):
  f := 1 + x*y+2*x^2*y^2 -7*x^2 *y/10 - 3*x^3*y^2;
  g := 1+3*x*y-4*x^2*y^2+5*x^3*y^3-6*x^4*y^4+3^7*x^(10)*y^4/2^5
       -54*x^(6)*y^3+5103*x^(9)*y^3/320+x^7*y^2+x^4*y;
  SJ := Sparse_Jelonek_2:
  SJ:-init([f, g]);
  SJ:-compute();
\end{lstlisting}
}
The output is
\[ -150304 + 349920  y_1  = 32 (10935\,y_1-4697) \]

To compute using Jelonek's algorithm,  we type
\begin{lstlisting}
  Jelonek_2([f, g], u, v);
\end{lstlisting}
The output is
  \begin{equation*}
    \begin{aligned}
    -\tfrac{1}{64000}
    ( 4428675\,y_1-4071951 )  ( 729\,y_1-761 )  ( y_1-1 ) ^{2}
    ( 10935\,y_1-4697 ) \\
    ( 9\,{y_1}^{4}-32\,{y_1}^{3}+12\,{y_1}^{2}y_2+5\,{y_1}^{2}+19\,y_1y_2+4\,{y_2}^{2}-35\,y_1-25\,y_2+43 )
    \end{aligned}
\end{equation*}

As we can see, it contains a superset of the Jelonek set.

\vspace{10pt}

\paragraph*{Acknowledgments}
%\subsection*{Acknowledgments}
BEH is supported by the DFG Walter Benjamin Program grant EL 1092/1-1. Part of this work was supported by the Austrian Science Fund (FWF): P33003. This work was initiated while BEH was at the Institute of Mathematics at the Polish Academy
of Sciences in Warsaw. Thanks to Zbigniew Jelonek for presenting the problem and
to Piotr Migus for fruitful discussions.
ET is supported by the ANR JCJC GALOP (ANR-17-CE40-0009), the PGMO grant ALMA,
and the PHC GRAPE.

%%% Local Variables:
%%% mode: latex
%%% TeX-master: "Jelonek_2"
%%% End:

\bibliographystyle{abbrvnat}

\bibliography{proper}

\end{document}